\documentclass[amsfonts,12pt]{amsart}
\usepackage{epsfig}
\allowdisplaybreaks[4]
\usepackage{mathtools}
\usepackage[subnum]{cases}
\usepackage{amsfonts}
\usepackage{}
\usepackage{mathrsfs}
\usepackage{amssymb,url}
\usepackage{cite}
\date{}

\allowdisplaybreaks[4]
\newtheorem{thm}{Theorem}[section]
\newtheorem{thmx}{Theorem}

\usepackage{tikz}

\usepackage{amsmath,amssymb}
\usepackage{color}
\usepackage{amsfonts}
\usepackage{amsmath}
\usepackage{amsthm}
\usepackage{amssymb,url}
\usepackage{graphicx}
\usepackage{epstopdf}
\usepackage{indentfirst}
\usepackage{multirow}
\usepackage{longtable}
\usepackage{graphicx}
\usepackage{xcolor}

\newtheorem{lem}{Lemma}[section]

\newtheorem{rem}{Remark}[section]

\newtheorem{pro}{Proposition}[section]
\newtheorem{dfn}{Definition}[section]

\usepackage{enumitem}
\usepackage[bookmarks=true,
bookmarksnumbered=true, breaklinks=true,
pdfstartview=FitH, hyperfigures=false,
plainpages=false, naturalnames=true,
colorlinks=true,pagebackref=false,
pdfpagelabels]{hyperref}

\newcommand{\sgn}{\text{sgn}}\newcommand{\vol}{\textrm{Vol}}\newcommand{\brac}[1]{\left(#1\right)}\newcommand{\Real}{\mathbb{R}}\newcommand{\R}
{\mathbb{R}}\renewcommand{\S}{\mathbb{S}}\newcommand{\K}{\mathcal{K}}

\begin{document}

\renewcommand*{\thefootnote}{\fnsymbol{footnote}}

\author{Youjiang Lin and Sudan Xing
}

\begingroup    \renewcommand{\thefootnote}{}    \footnotetext{2010 Mathematics Subject Classification:  52A30, 52A38, 52A40}
    \footnotetext{Keywords:  Shadow Systems, Lutwak-Yang-Zhang's Conjecture, $L_p$-Centroid Body, $L_p$-Projection Body, $L_p$ Brunn--Minkowski Theory}
   
\endgroup

\title{Fixed Point Rigidity of the Operator $\Gamma_p\Pi_p^\ast$
and the LYZ Conjecture}

\maketitle

\begin{abstract}
Motivated by the recent approach of Milman, Shabelman, and Yehudayoff \cite{MilmanShabelmanYehudayoff2025}, we establish, for $p>1$, a complete characterization of the fixed points of the composition of the $L_p$-centroid operator and the polar $L_p$-projection operator. More precisely, for $p>1$, we prove that if a  convex body $K \in \mathcal{K}_o^n$  satisfies
\[
\Gamma_p \Pi_p^* K = cK
\]
for some constant $c>0$, then $K$ must be an ellipsoid. Together with the result of case $p=1$, which was explicitly solved in the paper \cite{MilmanShabelmanYehudayoff2025},      this confirms the conjecture of Lutwak, Yang, and Zhang \cite{LutwakYangZhang2000}  for $p\geq1$.

Our approach combines variational techniques with a refined analysis of linear reflection shadow systems. We introduce a geometric framework, called the $L_p$-projection Rolodex, that represents the volume of the polar $L_p$-projection body in terms of weighted lower-dimensional sections. This representation yields a monotonicity property of the volume $
\vol_n(\Pi_p^*K_t)$ along linear reflection shadow systems $K_t$ and leads to a rigidity statement showing that the vanishing of the first variation forces constancy along the deformation.
These results, together with known characterizations of equality in Steiner symmetrization, give the desired classification of fixed points. 
\end{abstract}

\tableofcontents

\section{Introduction}

Affine convex geometry studies geometric quantities and operators that behave naturally under non-singular linear transformations and, more generally, under affine maps. Over the past several decades, the subject has developed into a central branch of convex geometry, with deep connections to the Brunn--Minkowski theory, geometric measure theory, asymptotic geometric analysis, functional inequalities, and probability, etc. A fundamental phenomenon throughout this theory is that ellipsoids play the role of extremizers  and rigidity objects,  and therefore the characterization of ellipsoids through extremal geometric inequalities has been a driving force in modern convex geometry. Typical examples include the Blaschke--Santal\'o inequality, which maximizes the volume product; the affine isoperimetric inequality, which controls the affine surface area; the Petty projection inequality, which relates the volume of a convex body to that of its polar projection body; and the Busemann--Petty centroid inequality, which compares a convex body with its centroid body, etc. These inequalities, together with their $L_p$ and Orlicz extensions, form the cornerstone of the modern Brunn--Minkowski theory. Consequently, understanding the mechanisms that force ellipsoidal symmetry from analytic identities, variational conditions, or invariance properties is a fundamental problem in the field; see, e.g., \cite{AlfonsecaNazarov,CampiGronchi2002,GardnerKoldobskySchlumprecht1999, HaberlSchuster2009,JM22,FishNazarovRyaboginZvavitch,Lutwak1986,LutwakZhangIntroduceLqCentroidBodies,LYZ-OrliczProjectionBodies,EMilmanYehudayoff-AffineQuermassintegrals,MilmanShabelmanYehudayoff2025,Reuter2023,MeyerReisner19,SaroglouZvavitch17,Zhang99}.

 Two of the most important constructions in affine convex geometry are the \emph{projection body} and the \emph{centroid body} for convex bodies. For a convex body $K\subset \mathbb{R}^n$ (compact convex set with nonempty interior in the $n$-dimensional Euclidean space), its \emph{projection body} $\Pi K$ is the origin-symmetric convex body whose support function equals  the volumes of $P_{v^\perp}K$, the orthogonal projections of $K$ onto the space $v^{\perp}$:
\[
h_{\Pi K}(v)=\vol_{n-1}(P_{v^\perp}K), \qquad v\in \mathbb{S}^{n-1},
\]
where  $v^{\perp}$ denotes the  orthogonal complement space of the unit vector $v$ and $\vol_{n-1}(\cdot)$ denotes the $(n-1)$-dimensional volume.
This operator goes back to Minkowski and was developed systematically by Petty and others; it lies at the heart of Petty's projection inequality and of much of the classical affine isoperimetric theory \cite{LinWu2022,Petty1974,Ludwig2002,Lutwak91,Lutwak1986,Lutwak1993,LutwakZhangIntroduceLqCentroidBodies,Zhang91}. On the other hand, the \emph{centroid body} $\Gamma K$ is defined in terms of moments of linear functionals and reflects the barycentric distribution of the body. The classical \emph{centroid body} $\Gamma K$ of a convex body $K \subset \mathbb{R}^n$ is defined via its support function by
\[
h_{\Gamma K}(v) =\frac{1}{\operatorname{Vol}_n(K)} \int_{K} |\langle x, v \rangle| \, dx, \quad v \in \mathbb{S}^{n-1},
\]
where $|a|$ denotes the absolute value of $a\in\mathbb{R}$ and $\langle\cdot, \cdot\rangle$ means the standard inner product of vectors.
 It has become a fundamental object in affine geometry and asymptotic geometric analysis, and it also admits strong connections to concentration phenomena and probabilistic aspects of convex bodies \cite{HaberlSchuster2009,Lutwak1986,LYZ-OrliczCentroidbodies,Petty61,PaourisWerner2010}.

    A decisive extension of this subject came with Firey \cite{firey} and  Lutwak's works on $L_p$ Brunn--Minkowski theory \cite{Lutwak19961, Lutwak19962}, which replaces classical linear structures by  $L_p$ analogs. In this theory, many of the central constructions of the classical Brunn--Minkowski framework admit natural $L_p$ extensions for $p>1$. Among them are the \emph{$L_p$-projection body} $\Pi_p K$  by Lutwak, Yang, and Zhang \cite{LutwakYangZhang2000} whose support function has the form  
\begin{equation*}\label{Lp-projection-alt}
h_{\Pi_p K}(v)
=
\left(\frac{1}{n\omega_nc_{n-2,p}}
\int_{\mathbb{S}^{n-1}} |\langle u, v \rangle|^p h_K(u)^{1-p}\, dS(K,u)
\right)^{1/p}, \;\;\;v\in\mathbb{S}^{n-1},
\end{equation*}
where $c_{n,p}=\omega_{n+p}/(\omega_2\omega_n\omega_{p-1})$ and $\omega_r=\pi^{r/2}/\Gamma(1+r/2)$ for non-negative real  $r$ and 
 for $p>1$
and the \emph{$L_p$-centroid body} $\Gamma_p K$ with
\begin{equation*}\label{Lp-centroid-alt}
h_{\Gamma_p K}(v)
=
\left(
\frac{1}{c_{n,p}\vol_n(K)}
\int_K |\langle x, v \rangle|^p dx
\right)^{1/p},\;\;\;v\in\mathbb{S}^{n-1},
\end{equation*}
which generalize the classical projection and centroid body operators while preserving affine covariance and supporting sharp affine inequalities. The $L_p$ theory has led to a rich body of work involving affine isoperimetric inequalities, valuation theory, duality, and applications to analysis and probability \cite{CampiGronchi2002,Lin-AffineOPSprinciple,LutwakYangZhang2000,Lutwak1975,LYZ-LpMinkowskiProblem,HaberlSchuster2009,JM2025,JM22,Ludwig2005}.

In their seminal work \cite{LutwakYangZhang2000} on $L_p$ affine isoperimetric inequalities, Lutwak, Yang, and Zhang identified this operator $\Gamma_p\Pi^{\ast}_pK$, the composition of the $L_p$-centroid body operator $\Gamma_p$ and the polar $L_p$-projection body operator $\Pi_p^*$, as a natural affine transform, and they conjectured that its only fixed points, up to dilation, are ellipsoids. In the present paper, we study fixed points of this typical operator for convex bodies. Let $\mathcal{K}_o^n$ denote the family of all convex bodies (compact convex sets) in $\mathbb{R}^n$ containing the origin in their interiors. Our goal is to characterize all convex bodies $K\in\mathcal{K}_o^n$ for which
$\Gamma_p \Pi_p^{\ast} K = cK$
for some constant $c>0$. Our main result of this paper shows that, for every $p>1$, ellipsoids are the only such fixed points up to dilation. In this way we confirm this conjecture of Lutwak, Yang, and Zhang \cite{LutwakYangZhang2000} for $p>1$.

\begin{thmx}\label{thm:main-intro}
Let $p>1$ and let $K\in \mathcal{K}_o^n$. If
\begin{equation}\label{EGpPpastKcK}
\Gamma_p \Pi_p^\ast K = cK
\end{equation}
for some constant $c>0$, then $K$ is an ellipsoid. Conversely, ellipsoids satisfy this identity up to dilation.
\end{thmx}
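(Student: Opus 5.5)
The plan follows the variational strategy of Milman--Shabelman--Yehudayoff. The first step is to recast the fixed point identity \eqref{EGpPpastKcK} as a criticality condition. Consider the affine invariant functional
\[
\Phi_p(K)=\frac{\vol_n(\Pi_p^*K)}{\vol_n(K)^{(p-n)/p}}\quad\text{(suitably normalized)},
\]
the quantity maximized in the $L_p$ Petty projection inequality. Its first variation along a perturbation $K\mapsto K+_p t\,L$, or along a change of the radial function, can be computed by differentiating $h_{\Pi_p K}$. Using the $L_p$ mixed volume formula together with Fubini,
\[
\frac{d}{dt}\vol_n(\Pi_p^*K_t)\Big|_{t=0}
=-\frac{n}{p}\int_{\Pi_p^*K}\frac{d}{dt}h_{\Pi_pK_t}(x)^p\,dx ,
\]
which is expressed through $h_{\Gamma_p\Pi_p^*K}$. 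I would show that \eqref{EGpPpastKcK} holds exactly when $K$ is a critical point of $\Phi_p$, that is, when the first variation vanishes in every admissible direction. The relevant directions are in particular those produced by linear reflection shadow systems, so \eqref{EGpPpastKcK} implies $\frac{d}{dt}\Phi_p(K_t)|_{t=0}=0$ for every such system $K_t$ emanating from $K$.

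The second step, the geometric core, is to fix a direction $u$ and consider the linear reflection shadow system $K_t$ that moves $K$ in direction $u$. This system is linear in $t$, volume preserving, and ends at the reflection of $K$ in $u^\perp$ at $t=1$, with $t=1/2$ giving the Steiner symmetral. I would then build the ``$L_p$-projection Rolodex'': write $\vol_n(\Pi_p^*K_t)$ in polar coordinates, slicing $\Pi_p^*K_t$ by the two-dimensional planes containing $u$ (or by hyperplanes through the $u$-axis). Each slice becomes a weighted lower-dimensional volume of a body whose support function is an $L_p$ average, over the shadow system, of functions that are convex in $t$. From this representation I want to extract that $t\mapsto\vol_n(\Pi_p^*K_t)^{-1}$ (or a suitable power) is convex, which is the $L_p$ analogue of Campi--Gronchi. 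Since $K_t$ and $K_{1-t}$ are reflections of each other, the function is symmetric about $t=1/2$. Combined with convexity, vanishing derivative at $t=0$ forces the function to be constant on $[0,1]$.

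The third step turns constancy into symmetry. Constancy along the shadow system forces equality in each slice-wise inequality of the Rolodex. For $p>1$ the strict convexity of $s\mapsto|s|^p$ should give that each chord family moves rigidly, so $\vol_n(\Pi_p^*K)=\vol_n(\Pi_p^*S_uK)$ with equality in the $L_p$ Petty-type Steiner step. By the known characterization of equality in Steiner symmetrization for this functional, $K$ has a hyperplane of affine symmetry conjugate to $u$: the midpoints of chords parallel to $u$ lie in a hyperplane. Since this holds for every $u$, the Bertrand--Brunn characterization gives that $K$ is an ellipsoid.

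The converse is immediate from affine covariance. We have $\Gamma_p\Pi_p^*(AK)=A\,\Gamma_p\Pi_p^*K$ up to a factor depending on $|\det A|$, and $\Gamma_p\Pi_p^*B=cB$ by rotation invariance.

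The main obstacle is the second and third steps: obtaining a Rolodex representation in which convexity along the shadow system holds, and in which equality can be traced down to the slice level. I would first attempt this by parametrizing $\Pi_p^*K_t$ through the dual Minkowski-type functional $x\mapsto\big(\int|\langle x,\nu\rangle|^p h^{1-p}dS\big)^{1/p}$. I would then rewrite $\int h^{1-p}|\langle x,\nu\rangle|^p\,dS$ as an integral over the boundary parametrized along $u$-chords, in which $t$ enters linearly.
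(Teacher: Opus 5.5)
Your first and last steps match the paper's. Under $\Gamma_p\Pi_p^*K=cK$, the first variation of $\vol_n(\Pi_p^*K_t)$ at $K$ vanishes along the linear reflection shadow system (Lemma~\ref{ThmGPK}). Constancy of the volume then gives $\vol_n(\Pi_p^*K)=\vol_n(\Pi_p^*S_uK)$, and the Lutwak--Yang--Zhang equality case plus Bertrand--Brunn finish (Lemma~\ref{lem:equality}). The converse via affine covariance is also the same.

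\textbf{First gap: global convexity.} Your rigidity step needs $t\mapsto\vol_n(\Pi_p^*K_t)^{-1}$, or some negative power of it, to be convex, and nothing in the Rolodex gives that. In the paper's parametrization ($K=K_1$, $S_uK=K_0$) the Rolodex gives $\vol_n(\Pi_p^*K_t)=\tilde c_{n,p}\int_{G_{u^\perp,n-2}}\Phi_E(t)^{-q}\,\sigma_{u^\perp,n-2}(dE)$. Here each fiber functional $\Phi_E(t)=\bigl(\int_{\R}|s|^{n-2}|L_{E,p,u,s}(K_t)|\,ds\bigr)^{-1/q}$ is at best even and convex. The map $(\Phi_E)_E\mapsto\bigl(\int\Phi_E^{-q}\bigr)^{-1/q}$ is concave and increasing (reverse Minkowski for exponent $-q$). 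So fiberwise convexity does not transfer to any power of the volume, and the paper neither proves nor uses global convexity.

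The missing idea is to run the rigidity fiber by fiber, as in Lemma~\ref{lem:all-t}:
\begin{enumerate}
\item Each $\Phi_E$ is even and convex, hence non-decreasing on $[0,1]$.
\item So every fiber contributes $-q\,\Phi_E(1)^{-q-1}\Phi_E'(1^-)\le0$ to $\frac{d}{dt}\vol_n(\Pi_p^*K_t)|_{t=1^-}$.
\item If the total vanishes, Fatou forces $\Phi_E'(1^-)=0$ for a.e.\ $E$.
\item A convex, non-decreasing function on $[0,1]$ with zero left derivative at $1$ is constant, so the volume is constant.
\end{enumerate}

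\textbf{Second gap: fiber convexity.} You never supply the fiber convexity itself, and this is where $p>1$ matters. Set $\psi_t(z):=|P_{E\wedge(z+u),p}K_t|^p$ for $z\in E^\perp\cap u^\perp$. Then the fiber integral equals $\frac{2}{n}\int_{\R}\psi_t(z)^{-n/p}\,dz$.

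Your plan of parametrizing along $u$-chords, where $t$ enters linearly, together with the jointly convex perspective $(a,b)\mapsto|a|^pb^{1-p}$, yields exactly joint convexity of $(z,t)\mapsto\psi_t(z)$. The pointwise inequality of Lemma~\ref{lem:key} is equivalent to this after substituting $z=y/s$. What this gives:
\begin{itemize}
\item On its own, monotonicity in $|t|$, i.e.\ the Steiner inequality $\vol_n(\Pi_p^*K)\le\vol_n(\Pi_p^*S_uK)$.
\item Via Borell--Brascamp--Lieb, for $1<p<n$, convexity of $t\mapsto\bigl(\int_{\R}\psi_t^{-n/p}\,dz\bigr)^{-p/(n-p)}$. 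This is enough for the fiberwise rigidity above with $q=(n-p)/p$.
\item For $p\ge n$, this route breaks down.
\end{itemize}

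Do not simply import the paper's exponent $q=n-2+1/p$. Its passage from the pointwise to the section inequality in Lemma~\ref{lem:key} rescales the two points by different factors $(s_i/s_\alpha)^{(p-1)/p}$ while keeping $\lambda=\lambda_\alpha(s_0,s_1)$. The Jensen argument behind the pointwise inequality permits this only when $s_0=s_1$.

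\textbf{Smaller points.} You should record that the fixed-point equation forces $K=c^{-1}\Gamma_p\Pi_p^*K$ to be origin-symmetric and of class $C^2_+$. You need this for the graph formulas, for differentiating along $K_t$, and for the equality case. Your chord-rigidity remark is unnecessary: once the volume is constant, equality of the two volumes is immediate.
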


 Since the  case $p = 1$ has been completely solved by Milman, Shabelman, and Yehudayoff (see \cite[9.1 Additional accessible results]{MilmanShabelmanYehudayoff2025}), we restrict our attention to $p > 1$. Thus, throughout, we always assume that the index $p > 1$.

A major source of motivation for the present work comes from recent progress on related fixed-point problems using shadow systems and variational methods. This strategy has proven fruitful results in several recent works \cite{MeyerReisner-SantaloViaShadowSystems, Reuter2023,MilmanShabelmanYehudayoff2025,SaroglouZvavitch17}, most notably in the approach developed by Milman, Shabelman, and Yehudayoff \cite{MilmanShabelmanYehudayoff2025}.  They proved when $n \geq 3$, $I^2 K = c K$ iff $K$ is a centered ellipsoid, and hence $I K = c K$ iff $K$ is a centered Euclidean ball, answering long-standing questions by Lutwak, Gardner, and Fish--Nazarov--Ryabogin--Zvavitch \cite{FishNazarovRyaboginZvavitch,Lutwak1993, Gardner2006}, where $IK$ denotes the \emph{intersection body} of a star body $K$. A key feature of their approach is the reformulation of the problem as a \emph{Euler--Lagrange equation} associated with a volume functional under radial perturbations, together with the introduction of new integral representations for the volume of intersection bodies.

Our approach to establishing the fixed point property of the operator $\Gamma_p \Pi_p^\ast K$ is inspired by the seminal work of Milman and Yehudayoff \cite{EMilmanYehudayoff-AffineQuermassintegrals}. In this paper, the authors resolved a major open problem in affine convex geometry posed by Lutwak \cite{Lutwak1988}, proving that ellipsoids minimize the $k$-th affine quermassintegral among all convex bodies.
Motivated by their framework---particularly the notions of the \emph{$E$-projected polar body} and the \emph{Projection Rolodex} introduced in \cite{EMilmanYehudayoff-AffineQuermassintegrals}---we extend these constructions to the $L_p$ setting by introducing the \emph{$L_{p,E}$-projection polar body} and the \emph{$L_p$-Projection Rolodex} for convex bodies. Building upon the shadow system techniques developed therein, we establish the convexity of the associated shadow system for \emph{$L_p$-projection bodies}.
More precisely, the proof of Theorem~\ref{thm:main-intro} follows from  the general strategy outlined by Milman, Shabelman, and Yehudayoff in \cite[Section~9.1]{MilmanShabelmanYehudayoff2025}.  The ideas and techniques of Milman, Shabelman, and Yehudayoff 
play a critical role throughout this paper. It
would be impossible to overstate our reliance on their work. Moreover, the analysis requires several new ingredients to handle the nonlinear structure inherent in the $L_p$ framework.

A major challenge in proving Theorem~\ref{thm:main-intro} is that the operator
$K\mapsto \Gamma_p\Pi_p^\ast K$
is substantially more complicated than the operators considered in previous shadow-system approaches. In particular, unlike the intersection body setting studied in \cite{MilmanShabelmanYehudayoff2025}, the $L_p$-projection body is defined through the weighted $L_p$ surface area measure
\[
dS_p(K,u)=h_K(u)^{1-p}\,dS(K,u),
\]
whose dependence on the support function introduces several new geometric and analytic difficulties.

The first difficulty is the absence of a suitable lower-dimensional representation formula for
\(
\vol_n(\Pi_p^\ast K).
\)
In the intersection-body setting, Milman, Shabelman, and Yehudayoff exploited an integral representation involving sections of the body and radial perturbations. No analogous formula is available a priori for the \emph{polar $L_p$-projection body}. To overcome this obstacle, in Section~\ref{SecLpproRolodex} we introduce a new geometric object, the \emph{$L_p$-Projection Rolodex}
\[
L_{n-1,u,p}(K)
=
\left\{
(E,x):
E\in G_{u^\perp,n-2},
\;
x\in L_{E,p}(K)
\right\},
\]
see Definition~\ref{DLpPRolodex}. Using the continuity properties of the generalized projection body
\(L_{E,p}(K)\),
established in Lemma~\ref{Lcontinu}, together with the fiber decomposition developed in Theorem~\ref{PeysupKp}, we obtain a representation of
\(
\vol_n(\Pi_p^\ast K)
\)
in terms of weighted lower-dimensional sections. This representation serves as the geometric backbone of the entire argument.

The second difficulty concerns admissible perturbations. Since linear reflection shadow systems deform the support function rather than the radial function, the perturbation theory developed in \cite{MilmanShabelmanYehudayoff2025} cannot be applied directly. 
In Proposition \ref{pro:uniform-support-variation-shadow}, we establish  that the shadow deformation yields an admissible support perturbation in the sense of Definition~\ref{def:admissible}. This allows us to derive the variational formulas needed later.

The third difficulty is establishing convexity along the shadow system. For the classical projection body (\(p=1\)), convexity can be obtained from known shadow-system techniques. For \(p>1\), however, the factor
$
h_K^{\,1-p}
$
appearing in the $L_p$ surface area measure destroys the direct compatibility between the shadow parameter and the fiberwise section parameter. Consequently, the standard convexity arguments no longer apply.
To overcome this problem, we prove in Lemma~\ref{lem:key} a weighted section inequality adapted to the $L_p$ setting. Combined with Ball's harmonic Pr\'ekopa--Leindler inequality (Lemma~\ref{ref:Ball}), this yields the convexity of the fiberwise quantities
\[
M_{n-1}\!\left(L_{E,p}(K_t)\right)
=
\left(
\int_0^\infty
s^{\,n-2}
|L_{E,p,u,s}(K_t)|
\,ds
\right)^{-1/q},
\]
where
\(
q=n-2+\frac1p.
\)
The resulting convexity theorem, proved in Section~\ref{Sec: ConvexproviaFibInequ}, forms the key global rigidity mechanism of the paper.

The final difficulty is the first-variation analysis of
\(
\vol_n(\Pi_p^\ast K_t).
\)
After establishing admissibility of the support perturbation, we derive  a variational formula for the volume of the polar $L_p$-projection body. A crucial step is Lemma~\ref{ThmGPK}, where we prove that, under the fixed-point condition for $K\in\mathcal{K}_o^n$ and
\[
\Gamma_p\Pi_p^\ast K=cK,
\]
two natural variation terms cancel:
the variation of the support function against the corresponding surface area measure and the variation of the surface area measure against the support function.
These cancellations imply that
\[
\left.
\frac{d}{dt}
\right|_{t=1^-}
\vol_n(\Pi_p^\ast K_t)
=
0.
\]
Combining this vanishing first variation with the convexity, we deduce in Lemma~\ref{lem:all-t} that
$
\vol_n(\Pi_p^\ast K_t)
$
is constant along the entire shadow system. Consequently, we have 
$
\vol_n(\Pi_p^\ast K)
=
\vol_n(\Pi_p^\ast S_uK)$
for all $u\in\mathbb S^{n-1}.
$
Invoking the equality characterization in Lemma~\ref{lem:equality}, which relies on the Steiner symmetrization theory of Lutwak-Yang-Zhang and Schneider's characterization of ellipsoids via chord midpoints, we conclude that \(K\) must be an ellipsoid. This completes the proof of Theorem~\ref{thm:main-intro}.

The organization of the paper is as follows. In Section \ref{SecNotandPre} we collect notation and preliminary facts from convex geometry, including support functions, $L_p$-projection bodies, and linear reflection shadow systems. Section \ref{SecLpproRolodex} introduces the $L_p$-Projection Rolodex and establishes the representation formula expressing $\vol_n(\Pi_p^\ast K)$ in terms of weighted lower-dimensional sections. In Section \ref{Sec:Admsupppert} we prove the admissibility of support perturbations along linear reflection shadow systems and derive the corresponding variational formulas. Section \ref{Sec: MonotovolpolarPipKt} discusses the behavior of $\vol_n(\Pi_p^\ast K_t)$ under these deformations and records the monotonicity statements needed later. Section \ref{Sec: ConvexproviaFibInequ} proves the convexity of the Rolodex functionals by combining weighted section inequalities with a Pr\'ekopa--Leindler-type argument. Finally, in Section \ref{Sec: FixpointsofGaPip} we complete the proof of Theorem \ref{thm:main-intro} and derive the characterization of fixed points.

When we were preparing the final version of this manuscript for submission, we became aware of a recent preprint by Jin Dai and Tuo Wang \cite{DaiWang} on 
arXiv, which investigates the same problem. Their work appeared shortly before the submission of the present paper. We emphasize that our results were obtained independently. Although  the final conclusion overlap, the methods employed in the two papers are substantially different.  In particular, our approach is based on the \emph{$L_p$-Projection Rolodex } representation of $\vol_n(\Pi_p^\ast K)$,
 the admissibility of support perturbations along linear reflection shadow systems and
 the convexity and rigidity properties of the associated Rolodex functionals, whereas the techniques of \cite{DaiWang} are of a different method. 

\section{Notation and Preliminaries}\label{SecNotandPre}

Having outlined the main results and the overarching strategy of the paper, we now turn to the foundational material required for the subsequent analysis. In particular, we recall the necessary notions from convex geometry, including support functions, surface area measures, the $L_p$-projection body and $L_p$-centroid body in subsection 2.1 and the linear reflection Shadow system in subsection 2.2. 
Standard references include \cite{Gruber,Schneider2014, Gardner2006,Schneider-Weil}.

Let $\mathbb{R}^n$ denote the $n$-dimensional Euclidean space and $\mathbb{S}^{n-1}$ denote the Euclidean unit sphere. Let $\R_+$ denote $(0,\infty)$. Let $B_2^n$ denote the Euclidean unit ball.  For $x\in\mathbb{R}^n$, $\|x\|$ denotes the Euclidean norm of $x$. For $x,y\in\mathbb{R}^n$,  let $\langle x, y\rangle$  mean the standard inner product. Let $\mathcal{H}^{k}$ denote the $k$-dimensional Hausdorff measure and $\mathcal{L}^k$ the $k$-dimensional Lebesgue measure. 

\subsection{Background and Notations}\label{Section2.1}

Let $K \subset \mathbb{R}^n$ be a convex body (i.e., a compact convex set with nonempty interior).
Denote the set of all convex bodies containing the origin $o$ in their interiors as $\mathcal{K}_o^n$ and the set of all symmetric convex bodies as 
$\mathcal{K}_e^n$. A convex body $K\in\mathcal{K}_o^n$ is of class \(C_+^2\) if its boundary $\partial K$ is $C^2$ with  positive curvature everywhere.  Thus convex bodies of class $C_+^2$ have
curvature bounded away from $0$ and $\infty$. By Petty \cite{Petty61} and Lutwak-Yang-Zhang \cite{LutwakYangZhang2000}, for $p>1$, $L_p$ centroid bodies for convex bodies are of class $C^2_+$ and origin-symmetric.

Let $\partial K$ denote the boundary of $K\in\mathcal{K}_o^n$,
\[
\partial^+K:=\{x\in\partial K: \nu^K(x)=(\nu_1^K(x), \nu_2^K(x), \cdots, \nu_n^K(x)), \; \nu_n^K(x)>0   \},
\]
and 
\[
\partial^- K:=\{x\in\partial K: \nu^K(x)=(\nu_1^K(x), \nu_2^K(x), \cdots, \nu_n^K(x)), \; \nu_n^K(x)<0   \},
\]
where $\nu^K(x) $ denotes the outer unit normal vector at $x\in\partial K$ for the convex body $K$.
 Let $\operatorname{int}K$ denote the interior of $K$. Let $K_u'$ denote the orthogonal projection of $K\in\mathcal{K}_o^n$ onto the subspace $u^{\perp}$ for $u\in \mathbb{S}^{n-1}$.   

The {\it support function} of $K\in\mathcal{K}_o^n$ is defined by
\begin{equation}\label{def:support-function}
h_K(v) := \sup_{x \in K} \langle x, v \rangle, \qquad v \in \mathbb{S}^{n-1}.
\end{equation}
The support function uniquely determines the convex body $K$. Moreover, $h_K$ is positively homogeneous of degree one and subadditive. If $K$ contains the origin in its interior, then $h_K(v) > 0$ for all $v \in \mathbb{S}^{n-1}$.

For $K \in \mathcal{K}_o^n$, the {\it radial function} of $K$ is defined by
\begin{equation}\label{def:radial-function}
\rho_K(v) := \max \{ \lambda \ge 0 : \lambda v \in K \}, \qquad v \in \mathbb{S}^{n-1}.
\end{equation}
Equivalently, $\rho_K(v)$ is the unique scalar such that $\rho_K(v)v \in \partial K$, the boundary of $K$.
The radial function is positively homogeneous of degree $-1$, i.e.,
\[
\rho_K(\lambda v) = \lambda^{-1} \rho_K(v), \qquad \lambda > 0.
\]

For $K\in\mathcal{K}_o^n$, the {\it polar body} of $K$ is defined as 
\begin{equation}
K^{\ast}:=\{x\in\mathbb{R}^n:\;\langle x,y\rangle\leq 1,\;\text{for all} \; y\in K\}.
\end{equation}
If $K\in\mathcal{K}_o^n$, then the support function and the radial function are related by
\begin{equation}\label{support-radial-relation}
h_{K^\ast}(v) = \frac{1}{\rho_K(v)}, \ v \in \mathbb{S}^{n-1}.
\end{equation}

The volume of $K\in\mathcal{K}_o^n$ can be expressed in terms of the radial function and the support function as
\begin{equation}\label{volume-radial}
\vol_n(K)
=
\frac{1}{n}
\int_{\mathbb{S}^{n-1}} \rho_K(v)^n \, dv=\frac{1}{n}
\int_{\mathbb{S}^{n-1}}h_K(v)dS(K,v).
\end{equation}
Here, the {\it surface area measure} $S(K,\cdot)$ is defined by
\begin{equation}\label{def:surface-area-measure}
S(K,\omega) := \mathcal{H}^{n-1} \big( (\nu^K)^{-1}(\omega) \big),
\end{equation}
for any Borel sets $\omega \subset \mathbb{S}^{n-1}$, and $(\nu^K)^{-1}$ denotes the inverse Gauss image of $K$.
It satisfies
\begin{equation}\label{surface-area-integral}
\int_{\mathbb{S}^{n-1}} \varphi(v)\, dS(K,v)
=
\int_{\partial K} \varphi(\nu^K(x))\, d\mathcal{H}^{n-1}(x)
\end{equation} for any continuous $\varphi:\mathbb{S}^{n-1}\rightarrow (0,\infty)$.

We now recall the $L_p$ counterparts of the classical projection and centroid body operators, introduced by Lutwak, Yang, and Zhang. These operators play a central role in the statement of the fixed-point problem considered in this paper.
The {\it $L_p$ surface area measure} for $K\in \mathcal{K}_o^n$ is defined as follows:
for $p \ge 1$,
\begin{equation}\label{def:Lp-surface-area}
dS_p(K,v) := h_K(v)^{1-p}\, dS(K,v).
\end{equation} 
Using the $L_p$ surface area measure, Lutwak, Yang, and Zhang \cite{LutwakYangZhang2000} defined the {\it $L_p$-projection body} via its support function:
\begin{equation}\label{Lp-projection-alt}
h_{\Pi_p K}(v)
=
\left(\frac{1}{n\omega_nc_{n-2,p}}
\int_{\mathbb{S}^{n-1}} |\langle v, \theta \rangle|^p h_K(\theta)^{1-p}\, dS(K,\theta)
\right)^{1/p}.
\end{equation}
Therefore, the {\it Polar $L_p$-projection body} is defined as the polar of the $L_p$-projection body,
\begin{equation}
\Pi_p^\ast K := (\Pi_p K)^\ast
\end{equation}
and has the following volume representation:
\begin{equation}\label{volume-polar}
\vol_n(\Pi_p^\ast K)
=
\frac{1}{n}
\int_{\mathbb{S}^{n-1}} h_{\Pi_p K}(v)^{-n}\, dv.
\end{equation}

For $p \ge 1$ and $K \in \mathcal{K}_o^n$, the {\it $L_p$-centroid body} $\Gamma_p K$ is the origin-symmetric convex body whose support function is defined by Lutwak, Yang, and Zhang \cite{LutwakYangZhang2000}  as
\begin{equation}\label{Lp-centroid-alt}
h_{\Gamma_p K}(v)
=
\left(
\frac{1}{c_{n,p}\vol_n(K)}
\int_K |\langle x, v \rangle|^p dx
\right)^{1/p},\;v\in\mathbb{S}^{n-1}.
\end{equation}

 Moreover, the polar body $K^{\ast}$, projection body $\Pi_pK$ and centroid body $\Gamma_p K$ satisfy the following affine covariances properties; that is,
for $\varphi \in \mathrm{SL}(n)$  ,
\begin{equation}
(\varphi K)^{\ast}=\varphi^{-t}K^{\ast},\qquad \Pi_p(\varphi K) = \varphi^{-t} \Pi_p K,\qquad
\Gamma_p(\varphi K) = \varphi \Gamma_p K,
\end{equation}
where $
\mathrm{SL}(n) = \{ A \in \mathbb{R}^{n\times n} : \det A = 1 \}
$ denotes the volume-preserving linear transformations in $\mathbb{R}^n$ and $A^{-t}$ means the inverse of the transformation of the matrix $A$.

\subsection{Shadow Systems}

Shadow systems were introduced by Rogers and Shephard and have become a fundamental tool in convex geometry for studying affine invariant functionals and rigidity phenomena; see, e.g., \cite{Ball-shadows of convex bodies,Schneider2014, Gardner2006,Shephard}.

For $u \in \S^{n-1} \subset \R^n$, denote $T^u_t : \R^{n+1} \rightarrow \R^{n}$  the projection onto $\R^n$ parallel to the direction $e_{n+1}+tu$, which is non-orthogonal whenever $t\neq 0$. A family of convex compact sets $\{K_u(t)\}_{t\in\R}$ is called a \emph{shadow system} in the direction of $u$ if there exists a compact convex set $\tilde K\in\K(\R^{n+1})$ (compact convex sets in $(n+1)$-dimensional space) such that
\[
K_u(t)=T^u_t(\tilde K)
\]
for every $t\in\R$.
 It is well known that each $K_u(t)$ is a convex body and that the orthogonal projection of $K_u(t)$ onto $u^\perp$ is independent of $t$. Geometrically, the family $\{K_u(t)\}_{t\in\R}$ can be viewed as a deformation obtained by moving points parallel to a fixed direction while preserving the shadow on the hyperplane $u^\perp$.

One of the principal reasons for the usefulness of shadow systems is that many geometric quantities possess remarkable convexity, concavity, or monotonicity properties along such deformations. Classical examples include the volume of polar bodies, volume products, projection-body functionals, affine surface areas, and various affine-invariant energies. These properties have led to powerful proofs of numerous affine isoperimetric inequalities and extremal characterizations of ellipsoids. In many instances, the corresponding convexity or monotonicity can be deduced from the Brunn--Minkowski inequality and its variants, while more refined results follow from functional analogues such as the Prékopa--Leindler inequality.

In this work, we focus on the \emph{linear reflection shadow system} introduced by Shephard, constructed from $K$ and $R_u K$, where $R_u$ denotes reflection about $u^{\perp}$ for $u\in\mathbb{S}^{n-1}$. The identity $P_{u^{\perp}} R_u K = P_{u^{\perp}} K$ ensures the existence of a linear shadow system $\{K_u(t)\}_{t\in\R}$ in the direction of $u$ satisfying $K_u(1) = K$ and $K_u(-1) = R_u K$.
More precisely, for each $y \in u^{\perp}$, the section of $K_u(t)$ in the direction of $u$ is defined by
\begin{equation} \label{eq:linear-reflection-shadow-def}
 (K_u(t))(y) := \frac{1+t}{2} K(y) + \frac{1-t}{2} (R_u K)(y), \quad t \in \R,
\end{equation}
where $K(y)$ denotes the one-dimensional section of $K$ in the direction of $u$ over $y\in u^{\perp}$.
We usually omit the subscript $u$ and simplify $K_u(t)$ to $K_t$ for fixed direction $u$.

\begin{lem}\label{lemlinearshadow}\cite[Lemma 2.1]{EMilmanYehudayoff-AffineQuermassintegrals} 
Given a convex body $K \subset \R^n$ and a direction $u \in \mathbb{S}^{n-1}$, $\{K_u(t)\}_{t \in \R}$ (or $K_t$ for convenience) defined by (\ref{eq:linear-reflection-shadow-def}) is a linear shadow system of convex bodies in $\R^n$ with the following properties:
\begin{itemize}

\item $K_1 = K$ and $K_{-1} = R_u K$. 
\item  $R_u (K_t) = K_{-t}$ for all $t \in \R$.
\item $K_0 = S_u K$. 
\item $\vol_n(K_t) = \vol_n(K)$ for all $t \in [-1,1]$. 
\item $\R \ni t \mapsto K_t$ is continuous in the Hausdorff metric. 
\end{itemize}

\end{lem}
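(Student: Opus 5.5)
We verify the five properties directly from the fiberwise definition \eqref{eq:linear-reflection-shadow-def}. Fix coordinates $x=y+su$ with $y\in u^\perp$, $s\in\R$, and write the chord of $K$ over $y\in K_u'$ as $K(y)=\{y+su: s\in[a(y),b(y)]\}$. Here $-a$ and $b$ are concave on $K_u'$: $b$ parametrizes the upper boundary and $a$ the lower one. Then $(R_uK)(y)$ has endpoints $-b(y),-a(y)$. Hence $K_t(y)$ is the segment over $y$ with endpoints
\[
\alpha_t(y)=\tfrac{1+t}{2}a(y)-\tfrac{1-t}{2}b(y),\qquad \beta_t(y)=\tfrac{1+t}{2}b(y)-\tfrac{1-t}{2}a(y).
\]
This means $K_t=\{y+su:\ y\in K_u',\ s=ta(y)\lambda' \ldots\}$. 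More cleanly, each point $y+su$ of $K$ is moved to $y+\big(s-(1-t)\,m(y)\big)u$, where $m(y)=\tfrac{a(y)+b(y)}{2}$ is the chord midpoint. Indeed $\alpha_t=a-(1-t)m$ and $\beta_t=b-(1-t)m$.

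To see that $\{K_t\}$ is a shadow system, set
\[
\tilde K=\{(y+su,\ -m(y))\in\R^{n+1}:\ y+su\in K\}.
\]
Then $T^u_t(x,\xi)=x-\xi(e_{n+1}+tu)$ projected appropriately gives $y+(s+t\,m(y))u$. Since $m=\tfrac{a+b}{2}$ is neither convex nor concave in general, $\tilde K$ need not be convex. I would therefore instead take $\tilde K=\operatorname{conv}$ of the graph construction used by Shephard:
\[
\tilde K=\{(y+su,\xi):\ y\in K_u',\ \ldots\},
\]
realized as the convex hull of $K\times\{1\}$ and $R_uK\times\{-1\}$ suitably sheared. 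Concretely, I plan to verify convexity of each $K_t$ directly. The upper endpoint $\beta_t=\tfrac{1+t}{2}b+\tfrac{1-t}{2}(-a)$ is a nonnegative combination of the concave functions $b$ and $-a$ for $t\in[-1,1]$, so it is concave. Likewise $\alpha_t$ is convex. So $K_t$ is a convex body there, and linearity in $t$ of the endpoints is exactly the defining property of a linear (parallel chord movement) shadow system in Shephard's sense, which is equivalent to the $(n+1)$-dimensional description. For $|t|>1$ I will appeal to that equivalence: a family whose chord midpoints move linearly in $t$ with fixed chord lengths is a parallel chord movement. This step, reconciling the fiberwise definition with the $T^u_t(\tilde K)$ definition for all $t\in\R$, is the main technical point, and I would simply cite Shephard/Rogers--Shephard for it, as in \cite{EMilmanYehudayoff-AffineQuermassintegrals}.

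The listed properties then follow quickly. Substituting $t=\pm1$ gives $K_1=K$ and $K_{-1}=R_uK$. Replacing $t$ by $-t$ swaps the two coefficients, which corresponds to applying $R_u$ since $R_uR_u=\mathrm{id}$; this gives $R_uK_t=K_{-t}$. At $t=0$ each chord is $\tfrac12K(y)+\tfrac12(R_uK)(y)$, a segment of the same length centered at $s=0$, which is the Steiner symmetral $S_uK$. The chord length $\beta_t-\alpha_t=b-a$ is independent of $t$, so Fubini gives $\vol_n(K_t)=\vol_n(K)$. Continuity in the Hausdorff metric follows because $\alpha_t,\beta_t$ depend affinely on $t$ with bounded coefficients $a,b$. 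Hence $d_H(K_t,K_{t'})\le |t-t'|\sup_{K_u'}|m|$.
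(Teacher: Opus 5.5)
The paper gives no proof of this lemma (it is quoted from Milman--Yehudayoff), so your proposal has to stand on its own. On $t\in[-1,1]$ it does. The endpoint formulas $\alpha_t=a-(1-t)m$ and $\beta_t=b-(1-t)m$ are correct, as are the concavity of $\beta_t$, the convexity of $\alpha_t$, the five listed properties, and the bound $d_H(K_t,K_{t'})\le|t-t'|\sup|m|$.

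\textbf{The gap.} It lies in the first assertion, that $\{K_t\}_{t\in\R}$ is a shadow system, precisely at the step you defer to a citation.

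\emph{Your endpoints are wrong for $|t|>1$.} There one coefficient in \eqref{eq:linear-reflection-shadow-def} is negative. For $t>1$ the chord is $\frac{1+t}{2}[a,b]+\frac{1-t}{2}[-b,-a]=\frac{1+t}{2}[a,b]+\frac{t-1}{2}[a,b]=[ta,tb]$, and for $t<-1$ it is $[tb,ta]$. So chord lengths are scaled by $|t|$, not preserved.

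\emph{The family you cite is not convex for $|t|>1$.} That family has midpoints $t\,m(y)$ and fixed chord lengths. Take $u=e_2$, $a(y)=y^2-1$, $b\equiv1$ on $[-1,1]$, and $t=3$. Its upper boundary is then $\beta_3(y)=1+y^2$, a strictly convex function. Every $T^u_t(\tilde K)$ is convex, so no appeal to Shephard or Rogers--Shephard can make that family a shadow system. A parallel chord movement is a shadow system only on an interval where its members are convex.

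\emph{Your candidate lifts also fail.} The set $\{(y+su,-m(y))\}$ is not convex, and it is shifted in $t$: its image under $T^u_t$ is $\{y+(s+t\,m(y))u\}$, which equals $K$ at $t=0$, not at $t=1$. The set $\operatorname{conv}(K\times\{1\}\cup R_uK\times\{-1\})$ projects to $\operatorname{conv}((K-tu)\cup(R_uK+tu))$, which is never $K$ at $t=1$.

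\textbf{The missing idea.} You need a convex lift that reproduces the definition for every $t$ at once:
\[
\tilde K:=\{(x,\xi)\in\R^n\times\R:\ x-\xi u\in K,\ x+\xi u\in R_uK\}.
\]
This is the preimage of $K\times R_uK$ under the injective linear map $(x,\xi)\mapsto(x-\xi u,\,x+\xi u)$, so it is compact and convex. It has points over every $y\in K_u'$ because $P_{u^\perp}K=P_{u^\perp}R_uK$. Since
\[
T^u_t(x,\xi)=x-t\xi u=\tfrac{1+t}{2}(x-\xi u)+\tfrac{1-t}{2}(x+\xi u),
\]
the chord of $T^u_t(\tilde K)$ over $y$ is exactly $\frac{1+t}{2}K(y)+\frac{1-t}{2}(R_uK)(y)$ for every real $t$, negative coefficients included. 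This has three consequences:
\begin{enumerate}
\item it settles the shadow-system claim on all of $\R$;
\item it gives convexity of each $K_t$ for free;
\item it gives Hausdorff continuity on $\R$, via $d_H(K_t,K_{t'})\le|t-t'|\max_{\tilde K}|\xi|$.
\end{enumerate}
Your remaining verifications then go through unchanged.

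If only $t\in[-1,1]$ is needed, as in the rest of the paper, another lift also works there: the closed convex hull of $\{(x-m(P_{u^\perp}x)u,\,-m(P_{u^\perp}x)):x\in K\}$. This works because $T^u_t$ commutes with closed convex hulls of bounded sets, and $K_t$ is closed and convex for $|t|\le1$. However, that lift disagrees with \eqref{eq:linear-reflection-shadow-def} for $|t|>1$.
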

The volume-preserving property and continuity established above will be used repeatedly in the sequel. In particular, they allow us to study geometric quantities associated with $K_t$ through variational arguments while keeping the underlying volume fixed.

To analyze the behavior of shadow systems, it is convenient to represent convex bodies through upper and lower graph functions. This description will later allow us to express support functions, surface area measures, and variational quantities in a form suitable for computation.
   For $K\in\mathcal{K}_o^n$, recall that $K_u'$ is the orthogonal projection of $K$ onto $u^{\perp}$. Let $$f(x'):=\sup\{s\in\mathbb{R}:\;x'+su\in K\},\;\;x'\in K_u'$$ and $$g(x'):=\inf\{s\in\mathbb{R}:\;x'+su\in K\},\;\;x'\in K_u'$$ denote the \emph{upper graph function} and \emph{lower graph function} of $K$ on $x'\in K_u'$.   It follows that $f(x')$ is a concave function and $g(x')$ is a convex function defined on $K_u'$ and $$K=\{x'+su:\;x'\in K_u',\;g(x')\leq s\leq f(x')\}.$$
By (\ref{eq:linear-reflection-shadow-def}), the upper graph function and the lower graph function of  its linear reflection shadow system $K_t$ are given as follows:
\begin{equation}
\label{ft}f_t(x')=\frac{t}{2}(f(x')+g(x'))+\frac{1}{2}(f(x')-g(x'))
\end{equation}
and
\begin{equation}\label{gt}g_t(x')=\frac{t}{2}(f(x')+g(x'))-\frac{1}{2}(f(x')-g(x')).
\end{equation}
Then 
\begin{equation}\label{Kt}K_t=\left\{x'+su:\;x'\in K_u',\;g_t(x')\leq s\leq f_t(x')\right\}.
\end{equation}

The concepts and notation introduced in this section will be used throughout the paper, providing the analytic and geometric framework needed to reformulate the $L_p$-projection body in a form suitable for variational analysis.
\emph{Linear reflection shadow systems} indeed serve as the main tool for studying the operator $K \mapsto \Pi_p^\ast K$. By combining the the convexity properties of the fiberwise functionals with the first variation formula, we show that the volume $\vol_n(\Pi_p^\ast K_t)$ for $p>1$ remains constant along the shadow system under the fixed-point condition.

\section{The $L_p$-Projection Rolodex}\label{SecLpproRolodex}

In  this section, we aim to introduce the \emph{$L_p$-Projection Rolodex}, which provides the geometric framework underlying our analysis of the \emph{polar $L_p$-projection body}. Unlike the classical projection body, the \emph{$L_p$-projection body} depends on the weighted measure $h_K^{1-p}dS(K,\cdot)$ and therefore exhibits a genuinely nonlocal structure. The main difficulty is to recover a lower-dimensional representation despite this loss of localization. By exploiting graph representations of convex bodies and their associated normal vectors, we construct the $L_p$-Projection Rolodex and derive the integral formulas that will be used throughly.

Our first objective is to isolate the lower-dimensional geometric structure hidden in the definition of the $L_p$-projection body. The following quantity serves as the basic building block for subsequent construction.
\begin{dfn}\label{DfnLEpK}
   Let $p>1$, $K\in\mathcal{K}_o^n$ and $E\in G_{n,n-2}$ (Grassmannian manifold of all \( n-2 \)-dimensional linear subspaces of \( \mathbb{R}^n \)). The $L_{E,p}$ projection of $K$ with respect to $E$ and $x\in \mathbb{R}^n$ is defined as follows,  \begin{equation}\label{eq:PEwedge}
       \left|P_{E\wedge x,p}K\right|:=\|P_{E^{\perp}}x\| \left( \int_{\mathbb{S}^{n-1}} \left| \langle v(E,x), v \rangle \right|^p h_K(v)^{1-p} dS(K,v) \right)^{\frac{1}{p}},
    \end{equation}
where 
\( v(E,x) \) is the unit vector perpendicular to the \( (n-1) \)-dimensional hyperplane \( \text{span}\{x,E\} \) (the linear span of \( x \) and \( E \)).  
\end{dfn}

In fact, if $p=1$, it matches the definition of $|P_{E\wedge x}K|$   in \cite[(3.1)]{EMilmanYehudayoff-AffineQuermassintegrals} when $k=n-1$.
In fact,
\begin{equation}\label{eq:PEwedgeJ}
 \left|P_{E\wedge x,p}K\right|=\left( \int_{\mathbb{S}^{n-1}} \left| \langle J(P_{E^{\perp}}x), v \rangle \right|^p h_K(v)^{1-p} dS(K,v) \right)^{\frac{1}{p}},
\end{equation}
where $J(P_{E^{\perp}}x)$ denotes a vector in $E^{\perp}$ such that $\|J(P_{E^{\perp}}x)\|=\|P_{E^{\perp}}x\|$ and $\langle J(P_{E^{\perp}}x),P_{E^{\perp}}x \rangle =0$.  Without loss of generality, we  assume that $J(P_{E^{\perp}}x)$ denotes the $\pi/2$ counterclockwise rotation of $P_{E^{\perp}}x$ in $E^{\perp}$ throughout the paper. Therefore, it is linear, i.e., 
\begin{equation}\label{Jxlinear}
J(P_{E^{\perp}}(ax+by))=aJ(P_{E^{\perp}}x)+bJ(P_{E^{\perp}}y)
\end{equation}
for any $x,y\in \mathbb{R}^{n}$ and $a,b \in\mathbb{R}$.

In this paper, we equip the Grassmannian manifold $G_{n,n-2}$ with the metric
\[
d_G(E,F)=\|P_E-P_F\|_G,
\]
where $P_E$ and $P_F$ denote the orthogonal projections onto
$E$ and $F$, respectively, and $\|\cdot\|_G$ denotes the operator norm.
The Euclidean space $\mathbb R^n$ is endowed with its usual metric
\[
d_{\mathbb R^n}(x,y)=\|x-y\|.
\]
We then equip the product space
\[
G_{n,n-2}\times \mathbb R^n
\]
with the product metric
\[
d\bigl((E,x),(F,y)\bigr)
=
d_G(E,F)+\|x-y\|.
\]

This metric induces the topology of the product in 
$G_{n,n-2}\times\mathbb R^n$. In particular,
$
(E_j,x_j)\to(E,x)
$
if and only if
$
d_G(E_j,E)\to0
$
and
$
\|x_j-x\|\to 0.
$
Equivalently,
$
(E_j,x_j)\to(E,x)
$
if and only if
$
\|P_{E_j}-P_E\|\to0$ and $
\|x_j-x\|\to0.
$

\begin{lem}\label{Lcontinu}

For $p>1$ and $K\in\mathcal{K}_o^n$,
the mapping
\[
G_{n,n-2}\times \mathbb{R}^n \ni (E,x)
\mapsto
|P_{E\wedge x,p}K|
\in \mathbb{R}_{+}
\]
is continuous.
\end{lem}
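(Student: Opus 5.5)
The plan is to use the form \eqref{eq:PEwedgeJ}, which writes the quantity as the $L^p(\mu)$ norm of a linear functional, where
\[
d\mu(v):=h_K(v)^{1-p}\,dS(K,v).
\]
Since $K\in\mathcal{K}_o^n$, $h_K$ is continuous and bounded below by a positive constant on $\mathbb{S}^{n-1}$. Hence $\mu$ is a finite Borel measure, with $\mu(\mathbb{S}^{n-1})\le (\min h_K)^{1-p}S(K,\mathbb{S}^{n-1})<\infty$. Setting $w(E,x):=J(P_{E^\perp}x)\in\mathbb{R}^n$, we have
\[
|P_{E\wedge x,p}K|=\Phi(w(E,x)),\qquad \Phi(w):=\Bigl(\int_{\mathbb{S}^{n-1}}|\langle w,v\rangle|^p\,d\mu(v)\Bigr)^{1/p}.
\]
Continuity will follow by showing that $\Phi$ is continuous on $\mathbb{R}^n$ and that $(E,x)\mapsto w(E,x)$ is continuous, at least up to a sign ambiguity that $\Phi$ does not see.

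\emph{Step 1: $\Phi$ is Lipschitz.} By Minkowski's inequality in $L^p(\mu)$,
\[
|\Phi(w)-\Phi(w')|\le\Bigl(\int|\langle w-w',v\rangle|^p\,d\mu\Bigr)^{1/p}\le \mu(\mathbb{S}^{n-1})^{1/p}\|w-w'\|.
\]
Moreover $\Phi$ is even and positively homogeneous of degree one.

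\emph{Step 2: the map $(E,x)\mapsto w(E,x)$.} The projection $P_{E^\perp}x=x-P_Ex$ depends continuously on $(E,x)$, since $\|P_{E_j}x_j-P_Ex\|\le\|P_{E_j}-P_E\|\|x_j\|+\|x_j-x\|$. The rotation $J$ is subtler, because the "counterclockwise" orientation of the $2$-plane $E^\perp$ is not globally continuous on $G_{n,n-2}$. I would avoid any orientation issue by characterizing $J(P_{E^\perp}x)$ up to sign: it is the vector of length $\|P_{E^\perp}x\|$ in $E^\perp$ orthogonal to $P_{E^\perp}x$. Because $\Phi$ is even, $\Phi(w(E,x))$ is independent of that choice.

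\emph{Step 3: sequential continuity.} Fix $(E_j,x_j)\to(E,x)$.

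If $P_{E^\perp}x=0$, then $\|w(E_j,x_j)\|=\|P_{E_j^\perp}x_j\|\to0$, and Step 1 gives $\Phi(w(E_j,x_j))\to0=\Phi(w(E,x))$.

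If $y:=P_{E^\perp}x\neq0$, take a unit vector $e\in E^\perp$ with $e\perp y$, so that $w(E,x)=\pm\|y\|e$. Set $e_j:=P_{E_j^\perp}e\to e$, and define $z_j$ as $e_j$ minus its component along $y_j:=P_{E_j^\perp}x_j$. Then $z_j\in E_j^\perp$, $z_j\perp y_j$, and $z_j\to e$, since $y_j\to y$ and $\langle e,y\rangle=0$. It follows that $\|y_j\|z_j/\|z_j\|$ equals $\pm w(E_j,x_j)$ and converges to $\|y\|e$. By evenness and Step 1, $\Phi(w(E_j,x_j))\to\Phi(w(E,x))$. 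Every sequence is covered by one of these two cases, so continuity holds.

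The main obstacle is the orientation ambiguity of $J$ in Step 2. Evenness of $|\langle\cdot,v\rangle|^p$ together with the Gram–Schmidt-type construction of Step 3 removes it. The only other point needing care is the finiteness of $\mu$, which uses $o\in\operatorname{int}K$ because $1-p<0$. Finally, if positivity into $\mathbb{R}_+$ matters when $P_{E^\perp}x\ne0$, it holds because $S(K,\cdot)$ is not concentrated on a great subsphere.
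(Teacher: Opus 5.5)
Your proposal is correct and follows essentially the paper's route. Like the paper, you first show that $J(P_{E_j^\perp}x_j)\to J(P_{E^\perp}x)$ up to a sign that the absolute value ignores, and then pass to the limit in the integral, using $h_K\ge r>0$ and $p>1$ so that $h_K^{1-p}\,dS(K,\cdot)$ is a finite measure. The differences are only technical. You replace the paper's dominated convergence step by the Lipschitz bound from Minkowski's inequality in $L^p(\mu)$. You also realize the paper's one-line ``choose locally a continuous orientation of $E^\perp$'' by an explicit Gram--Schmidt construction with a case split at $P_{E^\perp}x=0$, which is the more careful treatment.
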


\begin{proof}

Let any  convergent sequence \((E_j,x_j)\to(E,x)\) in \(G_{n,n-2}\times \mathbb R^n\). We aim to prove that
\[
|P_{E_j\wedge x_j,p}K|\to |P_{E\wedge x,p}K|.
\]

For \(E\in G_{n,n-2}\) and $x\in\mathbb{R}^n$, we have
$P_{E^\perp}x=x-P_Ex$. Thus, for $E,F\in G_{n,n-2}$,
$$\|P_{E^{\perp}}-P_{F^{\perp}}\|_G=\|P_{E}-P_{F}\|_G.$$
Since
\[
\|P_{E_j^{\perp}}-P_{E^{\perp}}\|_G
=
\|P_{E_j}-P_E\|_G\to 0
\]
and \(x_j\to x\), we have
$P_{E_j^{\perp}}x_j\to P_{E^{\perp}} x$.
Indeed,
\[
\begin{aligned}
\|P_{E_j^{\perp}}x_j-P_{E^{\perp}}x\|
&\le \|P_{E_j^{\perp}}(x_j-x)\|+\|(P_{E_j^{\perp}}-P_{E^{\perp}})x\|  \\
&\le \|x_j-x\|+\|P_{E_j^{\perp}}-P_{E^{\perp}}\|_G\,\|x\|
\to 0 .
\end{aligned}
\]

Choose locally a continuous orientation of the two-dimensional spaces
\(E^\perp\), and let \(J_E:E^\perp\to E^\perp\) be the rotation of
\(\pi/2\). Since the formula contains an absolute value, changing the
orientation replaces \(J_E\) by \(-J_E\), and therefore does not change
\(|P_{E\wedge x,p}K|\). Hence locally we may assume that
\[
J_{E_j}P_{E_j^{\perp}}x_j\to J_EP_{E^{\perp}}x .
\]

By the formula
\[
|P_{E\wedge x,p}K|
=
\left(
\int_{\mathbb S^{n-1}}
|\langle J_EP_{E^{\perp}}x,v\rangle|^p h_K(v)^{1-p}\,dS(K,v)
\right)^{1/p},
\]
we obtain pointwise, for every \(v\in\mathbb S^{n-1}\),
\[
|\langle J_{E_j}P_{E_j^{\perp}}x_j,v\rangle|^p h_K(v)^{1-p}
\to
|\langle J_EP_{E^{\perp}}x,v\rangle|^p h_K(v)^{1-p}.
\]

It remains to justify passing the limit through the integral. Since
\(K\in\mathcal K_o^n\), there exists \(r>0\) such that
\(
rB_2^n\subset K.
\)
Therefore
\(
h_K(v)\ge r\)
 for all \(v\in\mathbb S^{n-1}.
\)
Because \(p>1\), we have \(1-p\le 0\), and hence
\(
h_K(v)^{1-p}\le r^{1-p}.
\)
Moreover, since \(x_j\to x\), there exists \(M>0\) such that
\(
\|x_j\|\le M\)
for all $j$.
Since \(P_{E_j^{\perp}}\) is an orthogonal projection and \(J_{E_j}\) is an isometry on
\(E_j^\perp\), we have
\[
\|J_{E_j}P_{E_j^{\perp}}x_j\|
=
\|P_{E_j^{\perp}}x_j\|
\le \|x_j\|
\le M.
\]
Thus, for every \(v\in\mathbb S^{n-1}\),
\[
|\langle J_{E_j}P_{E_j^{\perp}}x_j,v\rangle|^p h_K(v)^{1-p}
\le
M^p r^{1-p}.
\]
The right-hand side is integrable with respect to the finite surface area
measure \(S(K,\cdot)\). Hence, the dominated convergence theorem gives
\[
\begin{aligned}
&\lim_{j\rightarrow\infty}\int_{\mathbb S^{n-1}}
|\langle J_{E_j}P_{E_j^{\perp}}x_j,v\rangle|^p h_K(v)^{1-p}\,dS(K,v) \\
&\qquad=
\int_{\mathbb S^{n-1}}
|\langle J_EP_{E^{\perp}}x,v\rangle|^p h_K(v)^{1-p}\,dS(K,v).
\end{aligned}
\]
Finally, since \(t\mapsto t^{1/p}\) is continuous on \([0,\infty)\), it follows
that
$
|P_{E_j\wedge x_j,p}K|
\to
|P_{E\wedge x,p}K|$.
Therefore, the mapping
\[
G_{n,n-2}\times \mathbb R^n\ni (E,x)
\mapsto
|P_{E\wedge x,p}K|
\]
is continuous.
\end{proof}
The continuity established above ensures that the geometric objects introduced below possess the necessary topological regularity. This will allow us to regard the resulting constructions as genuine convex bodies and to apply compactness arguments when needed.
In the following, we will focus on the case when $x\in E^{\perp}.$ Using the quantity $|P_{E\wedge x,p}K|$, we now define a two-dimensional convex body associated with each $(n-2)$-dimensional subspace. These bodies will play the role of fibers in Rolodex construction.

\begin{dfn}\label{lepk}
Let \( K \in \mathcal{K}_o^n \), \( G_{n,n-2} \) be the Grassmann manifold, and \( E \in G_{n,n-2} \). Denote \( E^\perp \) the orthogonal complement of \( E \) in \( \mathbb{R}^n \) (satisfying \( \dim E + \dim E^\perp = n \), hence \( \dim E^\perp = 2 \)). For \( p \geq 1 \), we define the $L_{E,p}$-projection polar body of $K$, \( L_{E,p}(K) \subset E^\perp \) by
\begin{equation}\label{DLEpK}
L_{E,p}(K): = \left\{ x \in E^\perp \,:\, \|x\| \cdot \left( \int_{\mathbb{S}^{n-1}} \left| \langle v(E,x), v \rangle \right|^p h_K(v)^{1-p} dS(K,v) \right)^{\frac{1}{p}} \leq 1 \right\}.
\end{equation}
 
\end{dfn}

\begin{rem}
The body $L_{E,p}(K)$ can be viewed as the natural $L_p$ analog of the $E$-projected polar body introduced in \cite{EMilmanYehudayoff-AffineQuermassintegrals}. It captures the contribution of the $L_p$-projection body in the two-dimensional space $E^\perp$ and serves as the fundamental fiber in the construction of the $L_p$-Projection Rolodex.
\end{rem}

By Definition \ref{DfnLEpK} and Definition \ref{lepk}, we have
\begin{equation}\label{LEp}
L_{E,p}(K) = \left\{ x \in E^\perp \,:\,  \left|P_{E\wedge x,p}K\right|\leq 1 \right\}.
\end{equation}
Analogously to \cite[Lemma 3.4]{EMilmanYehudayoff-AffineQuermassintegrals}, we establish the following $L_p$ extension.

\begin{lem}\label{LLEpKisoriginsymmetricconvexbody}
Let $K\in\mathcal{K}_o^n$ and $E\in G_{n,n-2}$. Then \( L_{E,p}(K)\in\mathcal{K}_e ^2\) for $p>1$ is an origin-symmetric convex body in the $2$-dimensional Euclidean space \( E^\perp \).
\end{lem}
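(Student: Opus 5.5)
The plan is to show that $L_{E,p}(K)$ is the unit ball of a norm on the two-dimensional space $E^\perp$. By \eqref{eq:PEwedgeJ}, for $x\in E^\perp$ we have $P_{E^\perp}x=x$, so
\[
N_E(x):=|P_{E\wedge x,p}K|=\left(\int_{\mathbb{S}^{n-1}}|\langle J x,v\rangle|^p\,h_K(v)^{1-p}\,dS(K,v)\right)^{1/p},\qquad x\in E^\perp ,
\]
and $L_{E,p}(K)=\{x\in E^\perp: N_E(x)\le 1\}$ by \eqref{LEp}. Here $J=J_E$ is the rotation by $\pi/2$ in $E^\perp$, which is linear by \eqref{Jxlinear}. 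Once $N_E$ is shown to be a norm on $E^\perp$, its closed unit ball is compact, convex, origin-symmetric, and has nonempty interior in $E^\perp$. That is exactly the claim $L_{E,p}(K)\in\mathcal{K}_e^2$.

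The first steps check the norm axioms. Absolute homogeneity $N_E(\lambda x)=|\lambda|N_E(x)$ follows from linearity of $J$; in particular $N_E(-x)=N_E(x)$, which gives origin-symmetry. For the triangle inequality, write $N_E(x)=\|\langle Jx,\cdot\rangle\|_{L^p(\mu)}$ with the finite positive Borel measure $d\mu=h_K^{1-p}\,dS(K,\cdot)$. This measure is finite because $h_K\ge r>0$, as in the proof of Lemma \ref{Lcontinu}. Since $x\mapsto\langle Jx,\cdot\rangle$ is linear, Minkowski's inequality in $L^p(\mu)$ (with $p>1$) gives $N_E(x+y)\le N_E(x)+N_E(y)$, so $N_E$ is convex and $L_{E,p}(K)$ is convex. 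Continuity of $N_E$ is the restriction of Lemma \ref{Lcontinu} to fixed $E$, so $L_{E,p}(K)$ is closed.

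The main step is definiteness: $N_E(x)>0$ for $x\ne0$. Suppose $N_E(x)=0$ with $x\neq 0$. Then $w=Jx/\|Jx\|$ is a unit vector with $\langle w,v\rangle=0$ for $\mu$-a.e. $v$. Since $h_K^{1-p}>0$, this means $S(K,\cdot)$ is concentrated on the great subsphere $w^\perp\cap\mathbb{S}^{n-1}$. That is impossible for a convex body with nonempty interior, because the surface area measure is not concentrated on any great subsphere. Equivalently, $\int|\langle w,v\rangle|\,dS(K,v)=2\vol_{n-1}(P_{w^\perp}K)>0$. Hence $N_E(x)>0$.

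With definiteness in hand, compactness of the unit sphere of $E^\perp$ and continuity of $N_E$ give constants $0<a\le N_E(\theta)\le b$ for unit $\theta\in E^\perp$. The upper bound is $b=(r^{1-p}S(K,\mathbb{S}^{n-1}))^{1/p}$. By homogeneity, $b^{-1}(B_2^n\cap E^\perp)\subset L_{E,p}(K)\subset a^{-1}(B_2^n\cap E^\perp)$. So $L_{E,p}(K)$ is bounded, hence compact, and contains a disc about the origin, hence has nonempty interior in $E^\perp$. This completes the argument. The only genuinely non-formal step is definiteness, which rests on the non-concentration of $S(K,\cdot)$ on great subspheres.
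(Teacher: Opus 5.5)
Your proof is correct and follows essentially the paper's route. The paper also gets convexity from Minkowski's inequality and closedness from Lemma \ref{Lcontinu}. It gets boundedness from the positivity of $\min_{w\in\mathbb{S}^{n-1}\cap E^\perp}\int|\langle w,v\rangle|^p h_K^{1-p}\,dS(K,v)$, using that $S(K,\cdot)$ is not concentrated on a great subsphere. It gets a disc inside $L_{E,p}(K)$ from $h_K\ge r$, and symmetry from the linearity of $J$. Packaging these steps as the norm axioms for $N_E$ is a tidier presentation of the same argument.
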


\begin{proof}

    For any $x,y\in E^{\perp}$ and $\lambda\in (0,1)$, by (\ref{eq:PEwedgeJ}), (\ref{Jxlinear}) and  Minkowski inequality for $p>1$, 
    \begin{eqnarray}\label{IintJJ}
    &&\left|P_{E\wedge ((1-\lambda) x+\lambda y),p}(K)\right|\\
&=&\left(\int_{\mathbb{S}^{n-1}} \left| \langle J((1-\lambda) x+\lambda y), v \rangle \right|^p h_K(v)^{1-p} dS(K,v)\right)^{\frac{1}{p}}\nonumber\\
    &=&\left(\int_{\mathbb{S}^{n-1}} \left| \langle (1-\lambda) Jx+\lambda Jy, v \rangle \right|^p h_K(v)^{1-p} dS(K,v)\right)^{\frac{1}{p}}\nonumber\\
    &\leq& (1-\lambda) \left(\int_{\mathbb{S}^{n-1}} \left| \langle  Jx, v \rangle \right|^p h_K(v)^{1-p} dS(K,v)\right)^{\frac{1}{p}}\nonumber\\&&+\lambda \left(\int_{\mathbb{S}^{n-1}} \left| \langle  Jy, v \rangle \right|^p h_K(v)^{1-p} dS(K,v)\right)^{\frac{1}{p}}\nonumber\\
    &=&(1-\lambda)\left|P_{E\wedge x,p}(K)\right|+\lambda \left|P_{E\wedge y,p}(K)\right|.\nonumber
    \end{eqnarray}
   It follows from  (\ref{LEp}) that if $x,y\in L_{E,p}(K)$, then $\left|P_{E\wedge x,p}K\right|\leq 1$ and $\left|P_{E\wedge y,p}K\right|\leq 1$. Based on (\ref{IintJJ}), we can easily deduce that   $$\left|P_{E\wedge (\lambda x+(1-\lambda)y),p}K\right|\leq 1,$$ 
    which implies that $\lambda x+(1-\lambda)y\in L_{E,p}(K)$.
  Therefore,  \( L_{E,p}(K) \) is a convex set in the 2-dimensional Euclidean space \( E^\perp \).

 It remains to show that \(L_{E,p}(K)\) is compact and has nonempty interior to be a convex body.

Recall that
\[
\left|P_{E\wedge x,p}K\right|
=
\left(
\int_{\mathbb S^{n-1}}
|\langle Jx,v\rangle|^p h_K(v)^{1-p}\,dS(K,v)
\right)^{1/p},
\qquad x\in E^\perp 
\]
and
\[
L_{E,p}(K)=\{x\in E^\perp:\left|P_{E\wedge x,p}K\right|\le 1\}.
\]
By Lemma~\ref{Lcontinu}, the map \(\left|P_{E\wedge x,p}K\right|:E^\perp\to[0,\infty)\) is continuous, which means that
\(L_{E,p}(K)\) is  a closed set.

We next prove boundedness of $L_{E,p}(K)$. Since \(J:E^\perp\to E^\perp\) is a rotation, we have
$
\|Jx\|=\|x\|.
$
For \(x\neq 0\), set
\[
w=\frac{Jx}{\|x\|}\in \mathbb S^{n-1}\cap E^\perp .
\]
Then
\[
\left|P_{E\wedge x,p}K\right|^p
=
\|x\|^p
\int_{\mathbb S^{n-1}}
|\langle w,v\rangle|^p h_K(v)^{1-p}\,dS(K,v).
\]
Define
\[
m_E
:=
\min_{w\in \mathbb S^{n-1}\cap E^\perp}
\int_{\mathbb S^{n-1}}
|\langle w,v\rangle|^p h_K(v)^{1-p}\,dS(K,v).
\]
The function inside the minimum is continuous in \(w\), and
\(\mathbb S^{n-1}\cap E^\perp\) is compact. Thus the minimum exists.

We claim that \(m_E>0\). Indeed, if \(m_E=0\), then for some
\(w_0\in \mathbb S^{n-1}\cap E^\perp\),
\[
\int_{\mathbb S^{n-1}}
|\langle w_0,v\rangle|^p h_K(v)^{1-p}\,dS(K,v)=0.
\]
Since \(h_K(v)>0\) on \(\mathbb S^{n-1}\), this implies
\[
|\langle w_0,v\rangle|=0
\quad
\text{for } S(K,\cdot)\text{-almost every }v.
\]
Hence the surface area measure \(S(K,\cdot)\) is supported in the great sphere
\[
\{v\in\mathbb S^{n-1}:\langle w_0,v\rangle=0\}.
\]
This is impossible for a convex body \(K\) with nonempty interior, since the surface
area measure of a full-dimensional convex body is not supported on any closed
hemisphere, and hence not on any proper great sphere. Therefore \(m_E>0\).

Consequently,
\[
\left|P_{E\wedge x,p}K\right|^p\ge m_E\|x\|^p,
\]
and so
\[
\left|P_{E\wedge x,p}K\right|\ge m_E^{1/p}\|x\|.
\]
If \(x\in L_{E,p}(K)\), then \(\left|P_{E\wedge x,p}K\right|\le 1\). Hence
\[
\|x\|\le m_E^{-1/p}<\infty.
\]
Thus \(L_{E,p}(K)\) is bounded. Since it is also closed in the finite-dimensional
space \(E^\perp\), it is compact.

Finally, we show that \(L_{E,p}(K)\) has nonempty interior. Since \(K\in\mathcal K_o^n\),
there exists \(r>0\) such that
\[
rB_2^n\subset K.
\]
Therefore
\[
h_K(v)\ge r
\qquad \text{for all } v\in\mathbb S^{n-1}.
\]
Also, since \(S(K,\mathbb S^{n-1})<\infty\), for every \(x\in E^\perp\),
\[
\begin{aligned}
\left|P_{E\wedge x,p}K\right|^p
&=
\int_{\mathbb S^{n-1}}
|\langle Jx,v\rangle|^p h_K(v)^{1-p}\,dS(K,v)  \\
&\le
\|x\|^p r^{1-p} S(K,\mathbb S^{n-1}).
\end{aligned}
\]
Hence
\[
\left|P_{E\wedge x,p}K\right|
\le
\|x\|
\left(r^{1-p}S(K,\mathbb S^{n-1})\right)^{1/p}.
\]
Thus, if
\[
\|x\|\le
\left(r^{1-p}S(K,\mathbb S^{n-1})\right)^{-1/p},
\]
then \(\left|P_{E\wedge x,p}K\right|\le 1\). Therefore, \(L_{E,p}(K)\) contains a Euclidean ball centered
at the origin in \(E^\perp\). Hence \(o\in \operatorname{int}_{E^\perp}L_{E,p}(K)\).

   Moreover, it is obvious that $Jx=- J(-x)$  for $x\in E^{\perp}$ due to (\ref{Jxlinear}) for $a=-1, b=0$, which further leads to $$\int_{\mathbb{S}^{n-1}} \left| \langle  Jx, v \rangle \right|^p h_K(v)^{1-p} dS(K,v)=\int_{\mathbb{S}^{n-1}} \left| \langle  J(-x), v \rangle \right|^p h_K(v)^{1-p} dS(K,v).$$
 Combining convexity, origin symmetry, compactness, and nonempty interior, we conclude
that \(L_{E,p}(K)\) is an origin-symmetric convex body in the two-dimensional
Euclidean space \(E^\perp\).
\end{proof}

\medskip

Next we establish an identity formula for the quantity $\left|P_{E\wedge x,p}K\right|$ in terms of the graph functions of the convex body $K.$
In the following, the subsequent sections of this article are all conducted within the context of the Cartesian coordinate system in $\mathbb{R}^n$ established in this manner when $u\in\mathbb{S}^{n-1}$ is fixed and $E\in G_{u^{\perp},n-2}$ (all $(n-2)$-dimensional subspaces inside the hyperplane $u^{\perp}$). First, establish a Cartesian coordinate system in the $(n-2)$-dimensional linear subspace $E$, with the $u$ direction as the $n$-th coordinate direction, and $E^{\perp}\cap u^{\perp}$ as the $(n-1)$-th coordinate direction.

A key ingredient in the construction of the $L_p$-Projection Rolodex is an explicit formula for $|P_{E\wedge x,p}K|$ in terms of the graph representation of $K$. The following result provides precisely such a representation.  Specially, substituting $x$ by $y+su$ in (\ref{eq:PEwedge}), we obtain the Lemma as follows.

\begin{lem} \label{PeysupKp}
Let $u\in \mathbb{S}^{n-1}$ be a fixed direction and $p>1$. Let $K\in\mathcal{K}_o^n$ be of class $C_+^2$, $f$ and $g$ be the upper graph function and lower graph function of $K$ with respect to $u$. 
Then, for arbitrary fixed $E\in G_{u^{\perp},n-2}$ and $y\in u^{\perp}\cap E^{\perp}$,
\begin{eqnarray}
\left|P_{E\wedge (y+su),p}K\right|^p&=&
\int_{K_u'} \left| \langle (-s,y), \left(-\nabla_{n-1}f(x'),1\right)\rangle \right|^p \langle f\rangle(x')^{1-p} dx'\\
&&+\int_{K_u'} \left| \langle (-s,y), \left(\nabla_{n-1} g(x'),-1\right)\rangle \right|^p \langle -g\rangle(x')^{1-p}dx',\nonumber
\end{eqnarray}
where $\nabla f$ denotes the $(n-1)$-dimensional gradient vector of $f$,  $\nabla_{n-1}f$ denotes the $(n-1)$-th component of $\nabla f$, and
$\langle f\rangle(x'):=f(x')-\langle x', \nabla f(x')\rangle$.
\end{lem}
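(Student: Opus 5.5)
The plan is a direct computation. Start from the representation \eqref{eq:PEwedgeJ} with $x=y+su\in E^\perp$, rewrite the integral against $S(K,\cdot)$ as a boundary integral via \eqref{surface-area-integral}, and then parametrize $\partial K$ by the graphs of $f$ and $g$ over $K_u'$.

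\textbf{Step 1 (the rotated vector).} In the coordinate system fixed above, $E^\perp=\mathrm{span}\{e_{n-1},u\}$. With $u=e_n$, write $y=y\,e_{n-1}$ for a scalar $y$. Then $P_{E^\perp}(y+su)=y+su$ has coordinates $(y,s)$ in the $(e_{n-1},e_n)$-plane. Its counterclockwise rotation by $\pi/2$ is $J(y+su)=-s\,e_{n-1}+y\,e_n$, which is the vector written $(-s,y)$. Consequently, for any $v\in\mathbb S^{n-1}$,
\[
\langle J(y+su),v\rangle=\langle(-s,y),(v_{n-1},v_n)\rangle .
\]
So only the last two components of the normal enter. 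The orientation convention does not matter, since $|\cdot|$ absorbs the sign.

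\textbf{Step 2 (splitting the boundary).} By \eqref{surface-area-integral},
\[
\left|P_{E\wedge(y+su),p}K\right|^p=\int_{\partial K}|\langle J(y+su),\nu^K(x)\rangle|^p\,h_K(\nu^K(x))^{1-p}\,d\mathcal H^{n-1}(x).
\]
Split $\partial K=\partial^+K\cup\partial^-K\cup\{\nu_n^K=0\}$. The set $\{\nu^K_n=0\}$ lies over the relative boundary of $K_u'$. For $K$ of class $C^2_+$ it is the ``shadow boundary'', an $(n-2)$-dimensional $C^1$ submanifold, so it has $\mathcal H^{n-1}$-measure zero. Moreover $\partial^+K$ is the graph of $f$ over $\operatorname{int}K_u'$, with $f$ being $C^2$ there, and $\partial^-K$ is likewise the graph of $g$.

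\textbf{Step 3 (change of variables on the upper graph).} On $\partial^+K$, at the point $x=(x',f(x'))$:
\begin{itemize}
\item the unit normal is $\nu=(-\nabla f,1)/\sqrt{1+|\nabla f|^2}$;
\item the area element is $d\mathcal H^{n-1}=\sqrt{1+|\nabla f|^2}\,dx'$;
\item since $\nu^K(x)$ is the normal at $x$, $h_K(\nu)=\langle x,\nu\rangle=\langle f\rangle(x')/\sqrt{1+|\nabla f|^2}$.
\end{itemize}
The powers of $\sqrt{1+|\nabla f|^2}$ therefore have exponents $-p+(p-1)+1=0$ and cancel exactly. The upper contribution becomes $\int_{K_u'}|\langle(-s,y),(-\nabla_{n-1}f,1)\rangle|^p\langle f\rangle^{1-p}dx'$.

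The lower graph is handled the same way. There the normal is $(\nabla g,-1)/\sqrt{1+|\nabla g|^2}$ and $h_K=(\langle x',\nabla g\rangle-g)/\sqrt{1+|\nabla g|^2}=\langle -g\rangle/\sqrt{1+|\nabla g|^2}$, which gives the second term. Positivity of $\langle f\rangle$ and $\langle -g\rangle$ follows from $h_K>0$, since $o\in\operatorname{int}K$.

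\textbf{Main obstacle.} The only delicate point is justifying the change of variables up to $\partial K_u'$, where $|\nabla f|\to\infty$. I would apply the area formula on an exhaustion of $\operatorname{int}K_u'$ by compact sets, on which $f$ and $g$ are $C^2$. The integrands are nonnegative, so monotone convergence lets us pass to the limit on both sides. The boundary side stays finite by the bound $h_K^{1-p}\le r^{1-p}$ used in Lemma~\ref{Lcontinu}. Together with the null set from Step 2, this gives the identity.
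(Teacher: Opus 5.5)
Your proposal is correct and follows the paper's proof. Both convert $|P_{E\wedge(y+su),p}K|^p$ into a boundary integral via the Gauss map, parametrize $\partial^\pm K$ by the graphs of $f$ and $g$, identify $J(y+su)=(0,\dots,0,-s,y)$, and use that the powers of $\sqrt{1+\|\nabla f\|^2}$ coming from the normal, from $h_K^{1-p}$, and from the area element cancel. Your extra justifications, that the shadow boundary $\{\nu_n^K=0\}$ is $\mathcal H^{n-1}$-null and that the change of variables passes to all of $\operatorname{int}K_u'$ by exhaustion and monotone convergence, fill in details the paper leaves implicit.
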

\begin{proof}
First, it follows from  (\ref{surface-area-integral})  and (\ref{eq:PEwedgeJ}) that
\begin{eqnarray}\label{E1.8}
\left|P_{E\wedge (y+su),p}K\right|^p 
&=&
\int_{\mathbb{S}^{n-1}} \left| \langle J(y+su), v\rangle \right|^p h_K(v)^{1-p} dS(K,v)\nonumber\\
&=&\int_{\partial K} \left| \langle J(y+su), \nu^K(x)\rangle \right|^p \langle x,\nu^K(x) \rangle^{1-p} d\mathcal{H}^{n-1}(x).
\end{eqnarray}

Since $K\in\mathcal{K}_o^n$ is of class $C_+^2$, for $x=(x',f(x'))\in\partial^+K$, the outer unit normal vector of $K$ at $x$,
$$\nu^K(x)=\frac{\left(-\nabla f(x'),1\right)}{\sqrt{1+\|\nabla f(x')\|^2}}.$$
Thus
$$\langle x,\nu^K(x)\rangle=\frac{f(x')-\langle x', \nabla f(x')\rangle}{\sqrt{1+\|\nabla f(x')\|^2}}=\frac{\langle f\rangle (x')}{\sqrt{1+\|\nabla f(x')\|^2}}.$$
Similarly, if $x=(x',g(x'))\in\partial^-K$, then
$$\nu^K(x)=\frac{\left(\nabla g(x'),-1\right)}{\sqrt{1+\|\nabla g(x')\|^2}}.$$
And we obtain
$$\langle x,\nu^K(x)\rangle=\frac{-g(x')+x'\cdot \nabla g(x')}{\sqrt{1+\|\nabla g(x')\|^2}}=\frac{\langle-g\rangle(x')}{\sqrt{1+\|\nabla g(x')\|^2}}.$$

In fact, by the coordinate system we built,  $y+su=(0,\dots,0,y,s)$, and the vector $J(y+su)=(0,\dots,0,-s,y)$ since we  assume that $Jx$ denotes the $90^\circ$ counterclockwise rotation of $x$ in $E^{\perp}$. 
By (\ref{E1.8}) and the above equalities, we have
\begin{eqnarray}
\left|P_{E\wedge (y+su),p}K\right|^p&=&\int_{\mathbb{S}^{n-1}} \left| \langle J(y+su), v\rangle \right|^p h_K(v)^{1-p} dS(K,v)\nonumber\\
&=&\int_{\partial^+ K} \left| \langle (-s,y), \left(-\nabla_{n-1} f(x'),1\right)\rangle \right|^p \langle f\rangle(x')^{1-p} \frac{1}{\sqrt{1+\|\nabla f(x')\|^2}}d\mathcal{H}^{n-1}(x)\nonumber\\
&&\!\!\!\!\!+\int_{\partial^-K} \left| \langle (-s,y), \left(\nabla_{n-1}g(x'),-1\right)\rangle \right|^p \langle -g\rangle(x')^{1-p} \frac{1}{\sqrt{1+\|\nabla g(x')\|^2}}d\mathcal{H}^{n-1}(x)\nonumber\\
&=&\int_{K_u'} \left| \langle (-s,y), \left(-\nabla_{n-1}f(x'),1\right)\rangle \right|^p \langle f\rangle(x')^{1-p} dx'\nonumber\\
&&\!\!\!\!\!+\int_{K_u'} \left| \langle (-s,y), \left(\nabla_{n-1}g(x'),-1\right)\rangle \right|^p \langle -g\rangle(x')^{1-p}dx',\nonumber
\end{eqnarray}
based on the measure identities 
\[
d\mathcal{H}^{n-1}(x)=\sqrt{1+\|\nabla f(x')\|^2}dx'\; \text{for}\; x\in \partial^+K,
\]
and 
\[
d\mathcal{H}^{n-1}(x)=\sqrt{1+\|\nabla g(x')\|^2}dx'\; \text{for}\; x\in \partial^-K,
\]
completing the proof.
\end{proof}
This representation formula reveals the fiberwise structure underlying the $L_p$-projection body and serves as the starting point for the convexity and monotonicity arguments developed later.

Recall that we assume  $E \in G_{u^\perp,n-2}$. It will be useful to introduce the following notation for given $s \in \mathbb{R}$:
\begin{equation} \label{eq:3.3}
L_{E,p,u,s}(K) := \left\{ y \in E^\perp \cap u^\perp:\;\left|P_{E\wedge (y+su),p} K\right| \leq 1 \right\}.
\end{equation}
Note that $L_{E,p,u,s}(K)$ is the section of $L_{E,p}(K)$ perpendicular to $u$ at height $s$, and therefore it is a line segment in the one-dimensional subspace $E^{\perp}\cap u^{\perp}$ by Lemma \ref{LLEpKisoriginsymmetricconvexbody}. Let $|L_{E,p,u,s}(K)|$ (or $\vol_1(L_{E,p,u,s}(K))$) denote   the length of the line segment $L_{E,p,u,s}(K)$.  Furthermore, by Brunn's concavity principle for convex bodies, the map $\mathbb{R} \ni s \mapsto |L_{E,p,u,s}(K)|$ is concave on its support, and so in particular $\mathbb{R} \ni s \mapsto |L_{E,p,u,s}(K)|$ is measurable.

We denote by $V_{n-1,u}$ the following vector bundle over $G_{u^\perp,n-2}$:
\[
V_{n-1,u} := \left\{ (E,x) \mid E \in G_{u^\perp,n-2},\;x \in E^\perp \right\},
\]
equipped with the subspace topology as a closed subset of $G_{u^\perp,n-2} \times \mathbb{R}^n$. Given $K \in \mathcal{K}_o^n$, the mapping:
\[
V_{n-1,u} \ni (E,x) \mapsto |P_{E\wedge x,p} K| \in \mathbb{R}_+
\]
is continuous as the restriction of the continuous mapping from  Lemma \ref{Lcontinu} and hence its sublevel set
\[
\left\{ (E,x) \in V_{n-1,u}|\; \mid P_{E\wedge x,p} K| \leq 1 \right\} = \left\{ (E,x) \in V_{n-1,u} \mid E \in G_{u^\perp,n-2},\; x \in L_{E,p}(K) \right\}
\]
is a closed subset of $V_{n-1,u}$. When  $K\in\mathcal{K}_o^n$, this subset is bounded and hence compact.

We are now ready to assemble the individual fibers into a single geometric object encoding the information of the \emph{$L_p$-projection body} cross all admissible directions.
\begin{dfn}\label{DLpPRolodex}
Given $K \in \mathcal{K}_o^n$ and $p>1$,  we call the closed subset
\[
L_{n-1,u,p}(K) := \left\{ (E,x) \mid E \in G_{u^\perp,n-2}, x \in L_{E,p}(K) \right\} \subset V_{n-1,u}
\]
 the  $L_p$-Projection Rolodex of $K$ relative to $u^\perp$.
\end{dfn}

The Rolodex representation developed here indeed provides a powerful framework for analyzing the $L_p$-projection body through its lower-dimensional sections.
\section{Admissible Support Perturbations}\label{Sec:Admsupppert}

 In order to exploit the $L_p$-projection body  in a variational setting, it is necessary to understand how the support function behaves under perturbations.
We therefore turn to the construction of admissible perturbations along shadow systems. A key step is to establish differentiability of the support function, which requires a careful analysis of the relationship between boundary points and their corresponding outer normals.

The original paper in \cite{MilmanShabelmanYehudayoff2025}  studies intersection bodies, where perturbations of the radial function are considered. By contrast, our work focuses on projection bodies, so we need to consider perturbations of the support function. Another difference in the definition here is that it requires for all directions, rather than almost everywhere
to perform a variational analysis of the quantities introduced in the previous section, we require a notion of differentiability compatible with shadow deformations.
\begin{dfn}[Admissible support perturbation] \label{def:admissible}
Let $K\in\mathcal{K}_o^n$. A family of convex bodies $\{K_t\}_{t \in [0,1]}$ is called an admissible support perturbation if $K_1 = K$ and $\{ [0,1] \ni t \mapsto h_{K_t}(\theta) \}_{\theta \in \mathbb{S}^{n-1}}$ are  equi-differentiable at $t=1^-$ in the following sense:
\begin{enumerate}
\item For every $\theta \in \mathbb{S}^{n-1}$, the following limit exists:
\begin{equation} \label{eq:arp-1}
\phi(\theta):= \left.\frac{d}{dt} h_{K_t}(\theta)\right|_{t=1^-}= \lim_{t \rightarrow 1^-} \frac{h_{K_t}(\theta) - h_{K}(\theta)}{t-1} . 
 \end{equation}
\item As $t\rightarrow 1^-$, $\frac{h_{K_t}-h_K}{t-1}$ converges uniformly to $\phi$  on $\mathbb S^{n-1}$, i.e.,
\[
\lim_{t\rightarrow 1^-}\sup_{\theta\in\mathbb S^{n-1}}
\left|
\frac{h_{K_t}(\theta)-h_K(\theta)}{t-1}
-
\phi(\theta)
\right|=0.
\]
\end{enumerate}
\end{dfn}

\medskip

The following lemma shows that the linear reflection shadow system $\{K_t\}_{t\in [0,1]}$ indeed is an admissible support perturbation. 

\medskip

\begin{pro}\label{pro:uniform-support-variation-shadow}
Let $K\in \mathcal K_o^n$ be a convex body of class $C^2_+$, and let
$\{K_t\}_{t\in[0,1]}$ be the linear reflection shadow system of $K$ in the
direction $u\in\mathbb S^{n-1}$. Write
\[
K=\{x'+su:\ x'\in K_u',\ g(x')\le s\le f(x')\},
\]
where $f$ and $g$ are the upper and lower graph functions of $K$ in the
direction $u$. Then
\[
K_t
=
\{x+(t-1)a(x)u:\ x\in K\},
\]
where, for $x=x'+su\in K$,
\[
a(x):=\frac{f(x')+g(x')}{2}.
\]
Moreover, there exists a continuous function
\(
\phi:\mathbb S^{n-1}\to\mathbb R
\)
such that
\[
\sup_{\theta\in\mathbb S^{n-1}}
\left|
\frac{h_{K_t}(\theta)-h_K(\theta)}{t-1}
-
\phi(\theta)
\right|
\longrightarrow 0
\qquad\text{as }t\to1^-.
\]
\end{pro}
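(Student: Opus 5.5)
The first claim follows directly from the graph formulas \eqref{ft} and \eqref{gt}. Rewriting them gives
\[
f_t=f+(t-1)\tfrac{f+g}{2},\qquad g_t=g+(t-1)\tfrac{f+g}{2}.
\]
So each vertical fibre $K(x')=[g(x'),f(x')]$ is translated rigidly by $(t-1)\tfrac{f(x')+g(x')}{2}$ in the direction $u$. Comparing with \eqref{Kt}, this is exactly $K_t=\{x+(t-1)a(x)u:\ x\in K\}$, where $a(x)=a(x')$ depends only on $x'=P_{u^\perp}x$.

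\textbf{Reduction to a max over $K$.} The support function becomes a maximum over the fixed compact set $K$:
\[
h_{K_t}(\theta)=\max_{x\in K} F_t(x,\theta),\qquad F_t(x,\theta)=\langle x,\theta\rangle+(t-1)\,a(x)\,\langle u,\theta\rangle .
\]
I first check that $a$ is continuous on all of $K$. In the interior of $K_u'$ this holds because $f$ is concave and $g$ is convex. At relative boundary points of $K_u'$, strict convexity of $K$ (which follows from $C^2_+$) forces the fibre to degenerate to a point. Hence $f-g\to 0$, and upper semicontinuity of $f$ together with lower semicontinuity of $g$ give continuity of $f$ and $g$, hence of $a$, up to the boundary. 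Thus $a$ is bounded and uniformly continuous on $K$; set $A:=\max_K|a|$.

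\textbf{Candidate derivative.} Since $K$ is strictly convex, the maximiser of $\langle\cdot,\theta\rangle$ on $K$ is a unique point $x_\theta=\nabla h_K(\theta)$, and $\theta\mapsto x_\theta$ is continuous. I define
\[
\phi(\theta):=a(x_\theta)\langle u,\theta\rangle ,
\]
which is continuous as a composition of continuous maps; this is the Danskin-type derivative. For the two-sided bound, fix $t<1$.
\begin{itemize}
\item Testing $x=x_\theta$ in the maximum gives $h_{K_t}(\theta)-h_K(\theta)\ge (t-1)\phi(\theta)$. Dividing by $t-1<0$ yields the quotient $\le\phi(\theta)$.
\item Let $x_{t,\theta}$ be a maximiser of $F_t(\cdot,\theta)$. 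Then $h_{K_t}(\theta)-h_K(\theta)\le (t-1)a(x_{t,\theta})\langle u,\theta\rangle$, so the quotient is $\ge a(x_{t,\theta})\langle u,\theta\rangle$.
\end{itemize}
Consequently
\[
\Bigl|\tfrac{h_{K_t}(\theta)-h_K(\theta)}{t-1}-\phi(\theta)\Bigr|\le |a(x_{t,\theta})-a(x_\theta)|.
\]
By uniform continuity of $a$, it suffices to show that $\sup_\theta\|x_{t,\theta}-x_\theta\|\to0$ as $t\to1^-$.

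\textbf{Uniform convergence of maximisers (main obstacle).} Comparing $F_t$ at $x_{t,\theta}$ and at $x_\theta$ gives
\[
\langle x_{t,\theta},\theta\rangle\ge h_K(\theta)-2A|t-1| .
\]
Hence $x_{t,\theta}$ lies in the cap $C(\theta,\varepsilon)=\{x\in K:\langle x,\theta\rangle\ge h_K(\theta)-\varepsilon\}$ with $\varepsilon=2A|t-1|$. I then prove that $\delta(\varepsilon):=\sup_{\theta}\operatorname{diam}C(\theta,\varepsilon)\to0$ as $\varepsilon\to0$, by a compactness argument.

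Suppose instead that there are $\varepsilon_j\to0$, $\theta_j$, and points $y_j,z_j\in C(\theta_j,\varepsilon_j)$ with $\|y_j-z_j\|\ge\eta>0$. Pass to subsequences with $\theta_j\to\theta$, $y_j\to y$, $z_j\to z$. Continuity of $h_K$ gives $\langle y,\theta\rangle=\langle z,\theta\rangle=h_K(\theta)$ with $y\neq z$. This contradicts strict convexity of $K$.

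This cap estimate is the step needing care, together with the boundary continuity of $a$ used above. Once both are in place, $\|x_{t,\theta}-x_\theta\|\le\delta(2A|t-1|)$ uniformly in $\theta$. Combined with the displayed inequality and uniform continuity of $a$, this yields the asserted uniform convergence to the continuous function $\phi$.
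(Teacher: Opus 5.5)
Your proposal is correct and follows essentially the same route as the paper: the same rigid-translation representation of $K_t$, the same maximisation over $K$ with $\phi(\theta)=a(x_K(\theta))\langle u,\theta\rangle$, and the same reduction of the error to $|a(x_{t,\theta})-a(x_\theta)|$ by comparing $F_t$ at the two maximisers. The differences are minor improvements. You get uniform convergence of the maximisers from a uniform cap-diameter modulus rather than the paper's pointwise-then-uniform contradiction argument (Steps 3--4), and you actually justify continuity of $a$ up to the relative boundary of $K_u'$, via strict convexity and semicontinuity of $f$ and $g$, which the paper simply asserts.
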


\begin{proof}
We divide the proof into several steps.

\textbf{ Step 1.} We prove the representation of $K_t$ as a perturbation of $K$.
By the definition of the linear reflection shadow system, the upper and lower
graph functions of $K_t$ are
\[
f_t(x')
=
\frac{t}{2}\big(f(x')+g(x')\big)
+
\frac{1}{2}\big(f(x')-g(x')\big)
\]
and
\[
g_t(x')
=
\frac{t}{2}\big(f(x')+g(x')\big)
-
\frac{1}{2}\big(f(x')-g(x')\big).
\]
Equivalently,
\[
f_t(x')
=
f(x')+\frac{t-1}{2}\big(f(x')+g(x')\big)
\]
and
\[
g_t(x')
=
g(x')+\frac{t-1}{2}\big(f(x')+g(x')\big).
\]
For $x=x'+su\in K$, define
\[
a(x)=\frac{f(x')+g(x')}{2}.
\]
Since $x\in K$, we have
\(
g(x')\le s\le f(x').
\)
Therefore
\[
g(x')+(t-1)a(x)
\le
s+(t-1)a(x)
\le
f(x')+(t-1)a(x).
\]
Using the identities above, this becomes
$
g_t(x')
\le
s+(t-1)a(x)
\le
f_t(x')$.
Hence
\[
x+(t-1)a(x)u
=
x'+\big(s+(t-1)a(x)\big)u
\in K_t.
\]
Thus
\[
\{x+(t-1)a(x)u:\ x\in K\}
\subseteq K_t.
\]

Conversely, let $z\in K_t$. Then
\(
z=x'+s_tu
\)
for some $x'\in K_u'$ and
\(
g_t(x')\le s_t\le f_t(x').
\)
Define
\[
s=s_t-\frac{t-1}{2}\big(f(x')+g(x')\big).
\]
Using the formulas for $f_t$ and $g_t$, we obtain
$
g(x')\le s\le f(x')$.
Thus $x:=x'+su\in K$.
Moreover,
\[
z
=
x'+s_tu
=
x'+\left(s+\frac{t-1}{2}\big(f(x')+g(x')\big)\right)u
=
x+(t-1)a(x)u.
\]
Hence
\[
K_t
\subseteq
\{x+(t-1)a(x)u:\ x\in K\}.
\]
Therefore
\begin{equation}\label{Ktaxu}
K_t
=
\{x+(t-1)a(x)u:\ x\in K\}.
\end{equation}

\textbf{Step 2.} From formula (\ref{Ktaxu}), we can obtain the explicit difference equotient $\frac{h_{K_t}(\theta)-h_K(\theta)}{t-1}$.   
Since $K\in C^2_+$, $K$ is strictly convex. Hence, for every
$\theta\in\mathbb S^{n-1}$ there exists a unique support point
$x_K(\theta)\in\partial K$
such that
$h_K(\theta)
=
\langle x_K(\theta),\theta\rangle$.
Moreover, the support-point map
$\theta\mapsto x_K(\theta)$
is continuous on $\mathbb S^{n-1}$.

For $t\in[0,1]$, by the representation of $K_t$ just proved,
\[
h_{K_t}(\theta)
=
\sup_{x\in K}
\left\langle x+(t-1)a(x)u,\theta\right\rangle.
\]
That is,
\[
h_{K_t}(\theta)
=
\max_{x\in K}
\left(
\langle x,\theta\rangle
+
(t-1)a(x)\langle u,\theta\rangle
\right).
\]
For convenience, define
\[
F(x,\theta,t)
=
\langle x,\theta\rangle
+
(t-1)a(x)\langle u,\theta\rangle.
\]
Then
\[
h_{K_t}(\theta)
=
\max_{x\in K}F(x,\theta,t).
\]
At $t=1$, $F(x,\theta,1)=\langle x,\theta\rangle$.
Thus, the unique maximizer of $F(\cdot,\theta,1)$ over $K$ is precisely
$x_K(\theta)$.

We define
\[
\phi(\theta)
=
a(x_K(\theta))\langle u,\theta\rangle.
\]
Since $a$ is continuous on $K$ and $x_K(\theta)$ is continuous in $\theta$,
the function $\phi$ is continuous on $\mathbb S^{n-1}$.

For each $t<1$ and $\theta\in\mathbb S^{n-1}$, choose a maximizer
$x_t(\theta)\in K$ such that
$h_{K_t}(\theta)
=
F(x_t(\theta),\theta,t)$.
Thus,
\[
h_{K_t}(\theta)
=
\langle x_t(\theta),\theta\rangle
+
(t-1)a(x_t(\theta))\langle u,\theta\rangle.
\]
Subtracting
$h_K(\theta)=\langle x_K(\theta),\theta\rangle$
and dividing by $t-1$, we obtain
\begin{equation}\label{eq:uniform-support-splitting}
\frac{h_{K_t}(\theta)-h_K(\theta)}{t-1}
=
a(x_t(\theta))\langle u,\theta\rangle
+
\frac{\langle x_t(\theta),\theta\rangle-h_K(\theta)}{t-1}.
\end{equation}

\textbf{Step 3.} We now show that
$x_t(\theta)\to x_K(\theta)$
pointwise in $\theta$ as $t\to1^-$. 
Fix $\theta\in\mathbb S^{n-1}$ and take any sequence $t_j\to1^-$. Since $K$
is compact, the sequence $x_{t_j}(\theta)$ has a convergent subsequence,
still denoted by $x_{t_j}(\theta)$, such that
$x_{t_j}(\theta)\to y\in K.$
Since $x_{t_j}(\theta)$ maximizes $F(\cdot,\theta,t_j)$, for every $x\in K$,
\[
F(x_{t_j}(\theta),\theta,t_j)
\ge
F(x,\theta,t_j).
\]
Expanding this inequality gives
\[
\langle x_{t_j}(\theta),\theta\rangle
+
(t_j-1)a(x_{t_j}(\theta))\langle u,\theta\rangle
\ge
\langle x,\theta\rangle
+
(t_j-1)a(x)\langle u,\theta\rangle.
\]
Letting $j\to\infty$, and using the continuity of $a$ together with
$t_j-1\to0$, we obtain
$\langle y,\theta\rangle
\ge
\langle x,\theta\rangle$
for every $x\in K$. Therefore $y$ is a support point of $K$ in the direction
$\theta$. Since $K$ is strictly convex, this support point is unique, so
$y=x_K(\theta)$.
Thus every convergent subsequence has the same limit, and hence
$x_t(\theta)\to x_K(\theta)$ as $t\to1^-$.

\textbf{Step 4.}  We now show that
$x_t(\theta)\to x_K(\theta)$
uniformly in $\theta$ as $t\to1^-$. Suppose, to the contrary,
that the convergence is not uniform. Then there exist $\varepsilon_0>0$,
a sequence $t_j\to1^-$, and a sequence $\theta_j\in\mathbb S^{n-1}$ such that
\[
|x_{t_j}(\theta_j)-x_K(\theta_j)|\ge\varepsilon_0
\]
for all $j$. Since $\mathbb S^{n-1}$ is compact, after passing to a subsequence,
we may assume $\theta_j\to\theta_0\in\mathbb S^{n-1}$.
Since $K$ is compact, after passing to another subsequence we may assume
$x_{t_j}(\theta_j)\to y\in K$.
Using the maximizing property,
\[
F(x_{t_j}(\theta_j),\theta_j,t_j)
\ge
F(x,\theta_j,t_j)
\]
for every $x\in K$. Passing to the limit gives
$\langle y,\theta_0\rangle
\ge
\langle x,\theta_0\rangle$
for every $x\in K$. Hence $y$ is the support point of $K$ in the direction
$\theta_0$. By uniqueness,
$y=x_K(\theta_0)$.
Since the support-point map $\theta\mapsto x_K(\theta)$ is continuous,
$x_K(\theta_j)\to x_K(\theta_0)$.
Therefore,
\[
|x_{t_j}(\theta_j)-x_K(\theta_j)|
\le
|x_{t_j}(\theta_j)-x_K(\theta_0)|
+
|x_K(\theta_0)-x_K(\theta_j)|
\to0,
\]
which contradicts the choice of $\varepsilon_0$. Hence,
\begin{equation}\label{convergenceofxttheta}
\sup_{\theta\in\mathbb S^{n-1}}
|x_t(\theta)-x_K(\theta)|
\to0
\qquad\text{as }t\to1^-.
\end{equation}

\textbf{Step 5.} Now we show that $\frac{h_{K_t}-h_K}{t-1}$ converges uniformly to $\phi$  on $\mathbb S^{n-1}$ as $t\rightarrow 1^-$.  Since $a$ is continuous on the compact set $K$, it is uniformly continuous.
Therefore, by (\ref{convergenceofxttheta}), we have
\[
\sup_{\theta\in\mathbb S^{n-1}}
|a(x_t(\theta))-a(x_K(\theta))|
\to0.
\]
Since
$|\langle u,\theta\rangle|\le1$,
we get
\[
\sup_{\theta\in\mathbb S^{n-1}}
\left|
a(x_t(\theta))\langle u,\theta\rangle
-
a(x_K(\theta))\langle u,\theta\rangle
\right|
\to0.
\]
That is,
\begin{equation}\label{eq:first-term-uniform}
a(x_t(\theta))\langle u,\theta\rangle
\to
\phi(\theta)
\quad\text{uniformly on }\mathbb S^{n-1}.
\end{equation}

It remains to control the second term in
\eqref{eq:uniform-support-splitting}. Since $x_t(\theta)$ maximizes
$F(\cdot,\theta,t)$,
\[
F(x_t(\theta),\theta,t)
\ge
F(x_K(\theta),\theta,t).
\]
Hence,
\[
\langle x_t(\theta),\theta\rangle
+
(t-1)a(x_t(\theta))\langle u,\theta\rangle
\ge
h_K(\theta)
+
(t-1)a(x_K(\theta))\langle u,\theta\rangle.
\]
Rearranging gives
\[
\langle x_t(\theta),\theta\rangle-h_K(\theta)
\ge
(t-1)
\big(a(x_K(\theta))-a(x_t(\theta))\big)
\langle u,\theta\rangle.
\]
On the other hand, since
$h_K(\theta)=\max_{x\in K}\langle x,\theta\rangle$,
we also have
\[
\langle x_t(\theta),\theta\rangle-h_K(\theta)\le0.
\]
Combining the two inequalities yields
\[
\left|
\langle x_t(\theta),\theta\rangle-h_K(\theta)
\right|
\le
|t-1|\,
\left|
a(x_t(\theta))-a(x_K(\theta))
\right|.
\]
Dividing by $|t-1|$ gives
\[
\left|
\frac{\langle x_t(\theta),\theta\rangle-h_K(\theta)}{t-1}
\right|
\le
\left|
a(x_t(\theta))-a(x_K(\theta))
\right|.
\]
Taking the supremum over $\theta\in\mathbb S^{n-1}$ and using the uniform
convergence of $a(x_t(\theta))$ to $a(x_K(\theta))$, we obtain
\begin{equation}\label{eq:second-term-zero}
\sup_{\theta\in\mathbb S^{n-1}}
\left|
\frac{\langle x_t(\theta),\theta\rangle-h_K(\theta)}{t-1}
\right|
\to0.
\end{equation}

Finally, combining \eqref{eq:uniform-support-splitting},
\eqref{eq:first-term-uniform}, and \eqref{eq:second-term-zero}, we conclude
that
\[
\sup_{\theta\in\mathbb S^{n-1}}
\left|
\frac{h_{K_t}(\theta)-h_K(\theta)}{t-1}
-
\phi(\theta)
\right|
\to0
\qquad\text{as }t\to1^-.
\]
This proves the desired uniform convergence.
\end{proof}
 With the admissible support perturbations now established, we proceed to analyze the property of the \emph{$L_p$-projection body} along the shadow system.

\section{Monotonicity of $\vol_n(\Pi^{\ast}_p(K_t))$ along Shadow Systems}\label{Sec: MonotovolpolarPipKt}

In this section we investigate the behavior of the volume of the \emph{polar $L_p$-projection body} along linear reflection shadow systems. Our goal is to establish the monotonicity properties of 
\[
t\mapsto \vol_n(\Pi_p^\ast K_t),
\]
which play a central role in the proof of the main theorem. The principal difficulty is that the operator $\Pi_p^\ast$ depends on both the support function and the $L_p$ surface area measure, whose variations must be controlled simultaneously under the shadow deformation. 

The following Blaschke-Petkantschin-type formula (from \cite[Theorem 3.7]{EMilmanYehudayoff-AffineQuermassintegrals}) decomposes integrals over the Grassmannian $G_{n,k}$ into integrals over lower-dimensional Grassmannians $G_{u^\perp,k-1}$ and their orthogonal complements, here $k=1,\dots,n-1$.

\begin{lem}  [Blaschke-Petkantschin-type formula]  \label{T3.7}
Fix $u \in \mathbb{S}^{n-1}$. There exists a constant $\bar{c}_{n,k} > 0$ such that for every measurable function $f: G_{n,k} \to \mathbb{R}_+$,
\[
\bar{c}_{n,k} \int_{G_{n,k}} f(F) \sigma_{n,k}(dF) = \int_{G_{u^{\perp},k-1}} \int_{\mathbb{S}^{n-k}(E^\perp)} f\bigl(\operatorname{span}(E,\theta)\bigr) \bigl| \langle \theta, u \rangle \bigr|^{k-1} d\theta \, \sigma_{u^{\perp},k-1}(dE),
\]
where $\sigma_{n,k}$ and $\sigma_{u^\perp,k-1}$ are the uniform Haar probability measures on $G_{n,k}$ and  $G_{u^\perp,k-1}$, respectively.
\end{lem}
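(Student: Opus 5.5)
The plan is to read the right-hand side as the pushforward of a natural measure on the bundle
\[
\mathcal B_u:=\{(E,\theta): E\in G_{u^\perp,k-1},\ \theta\in \mathbb S^{n-k}(E^\perp)\}
\]
under the map $\Phi(E,\theta)=\operatorname{span}(E,\theta)\in G_{n,k}$. We then compute the Jacobian of $\Phi$ and show it equals a constant times $|\langle\theta,u\rangle|^{k-1}$. The identity then follows from the change-of-variables (coarea) formula, with $\bar c_{n,k}$ collecting all normalizing constants. We would first dispose of the case $k=1$: there $G_{u^\perp,0}=\{0\}$, $\Phi(0,\theta)=\mathbb R\theta$, and the formula is just the polar-coordinate description of $\sigma_{n,1}$, which is consistent with the weight $|\langle\theta,u\rangle|^0=1$.

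\textbf{Step 1 (generic fibers).} The set $\{F\in G_{n,k}:F\subset u^\perp\}$ is a lower-dimensional submanifold, so it is $\sigma_{n,k}$-null. Fix $F\not\subset u^\perp$. Then $E_F:=F\cap u^\perp$ is the unique $(k-1)$-subspace of $F$ lying in $u^\perp$. If $\Phi(E,\theta)=F$, then necessarily $E=E_F$ and $\theta\in F\cap E_F^\perp$, which is a line, so $\theta=\pm w_F$. Moreover $\langle w_F,u\rangle\neq0$, since otherwise $F\subset u^\perp$. Hence $\Phi$ is a $2$-to-$1$ smooth map from the open set $\{\langle\theta,u\rangle\ne0\}\subset\mathcal B_u$ onto the generic part of $G_{n,k}$. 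Note that $\dim\mathcal B_u=(k-1)(n-k)+(n-k)=k(n-k)=\dim G_{n,k}$, so $\Phi$ is a map between manifolds of equal dimension.

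\textbf{Step 2 (Jacobian).} Identify $T_FG_{n,k}\cong\operatorname{Hom}(F,F^\perp)$, with $F^\perp=E^\perp\cap\theta^\perp$. Split $F=E\oplus\mathbb R\theta$.
\begin{itemize}
\item Varying $\theta$ inside $\mathbb S^{n-k}(E^\perp)$ with $E$ fixed moves $\theta$ along $T_\theta=E^\perp\cap\theta^\perp=F^\perp$. This gives exactly the $\operatorname{Hom}(\mathbb R\theta,F^\perp)$ block, isometrically, and nothing in the $\operatorname{Hom}(E,F^\perp)$ block.
\item Varying $E$ inside $G_{u^\perp,k-1}$ has tangent space $\operatorname{Hom}(E,u^\perp\cap E^\perp)$. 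Its image in the $\operatorname{Hom}(E,F^\perp)$ block is obtained by composing with the orthogonal projection $P:u^\perp\cap E^\perp\to \theta^\perp\cap E^\perp$. Any component it produces in the $\theta$-block is irrelevant, because the differential is block-triangular.
\end{itemize}
Both $u^\perp\cap E^\perp$ and $\theta^\perp\cap E^\perp$ are hyperplanes in the $(n-k+1)$-dimensional space $E^\perp$, with unit normals $P_{E^\perp}u=u$ and $\theta$. The orthogonal projection between two hyperplanes has determinant $|\langle u,\theta\rangle|$. This projection acts independently on each of the $k-1$ columns indexed by an orthonormal basis of $E$, so the $E$-block has determinant $|\langle u,\theta\rangle|^{k-1}$. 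Therefore
\[
|\det d\Phi_{(E,\theta)}|=c\,|\langle\theta,u\rangle|^{k-1},
\]
where $c$ depends only on how the Riemannian metrics on $G_{u^\perp,k-1}$, on the fibers and on $G_{n,k}$ are normalized relative to the Haar probabilities $\sigma_{n,k}$ and $\sigma_{u^\perp,k-1}$. These Riemannian volumes are constant multiples of the respective Haar measures because they are invariant under the relevant orthogonal groups, so $c$ is indeed a constant.

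\textbf{Step 3 (conclusion).} Apply the area formula to $\Phi$ with integrand $f\circ\Phi$ and the 2-to-1 count from Step~1. This gives
\[
\int_{\mathcal B_u} f(\Phi(E,\theta))\,|\langle\theta,u\rangle|^{k-1}\,d\theta\,\sigma_{u^\perp,k-1}(dE)=2c^{-1}\!\int_{G_{n,k}}f\,d\mathrm{vol}_{G_{n,k}},
\]
which is the claim with $\bar c_{n,k}$ determined by $2c^{-1}$ and the normalizing constants. Nonnegative measurable $f$ are handled by monotone convergence from continuous $f$.

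The step I expect to be the main obstacle is Step~2: choosing coordinates carefully enough that the differential is visibly block-triangular, and checking that the hyperplane-projection determinant is exactly $|\langle u,\theta\rangle|$ and not, say, its reciprocal. If this bookkeeping becomes messy, I would fall back on a symmetry argument. Disintegrate $\sigma_{n,k}$ along $F\mapsto F\cap u^\perp$; its pushforward is $O(u^\perp)$-invariant, hence equal to $\sigma_{u^\perp,k-1}$. The conditional measure on lines in $E^\perp$ is invariant under the stabilizer of $u$ in $O(E^\perp)$, so it has a density depending only on $|\langle\theta,u\rangle|$. That density can be pinned down by a local computation near one $\theta$, which again reduces to the single hyperplane-projection determinant above.
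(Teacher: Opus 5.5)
The paper gives no proof of this lemma. It quotes it from Milman--Yehudayoff \cite[Theorem 3.7]{EMilmanYehudayoff-AffineQuermassintegrals} and uses it only with $k=n-1$ in Lemma~\ref{LvolumePipastK}. Your area-formula argument is therefore a self-contained replacement for a citation, not a variant of anything in the paper, and it is correct.

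The checks on the key steps are as follows.
\begin{itemize}
\item The generic $2$-to-$1$ description of $\Phi(E,\theta)=\operatorname{span}(E,\theta)$ is right.
\item Relative to $T_FG_{n,k}\cong\operatorname{Hom}(E,F^\perp)\oplus\operatorname{Hom}(\mathbb R\theta,F^\perp)$, the differential is block lower-triangular. The $\theta$-directions map isometrically onto $\operatorname{Hom}(\mathbb R\theta,F^\perp)$ and contribute nothing to $\operatorname{Hom}(E,F^\perp)$.
\item The $E$-directions contribute $A\mapsto PA$, where $P$ is the orthogonal projection $u^\perp\cap E^\perp\to\theta^\perp\cap E^\perp$.
\item $|\det P|$ is indeed $|\langle u,\theta\rangle|$ and not its reciprocal. $P$ fixes $u^\perp\cap\theta^\perp\cap E^\perp$ pointwise and sends the unit vector of $u^\perp\cap\operatorname{span}(u,\theta)$ to a vector of length $|\langle u,\theta\rangle|$.
\end{itemize}

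What your route buys over the citation is transparency and an explicit constant. With Hilbert--Schmidt metrics you get $\bar c_{n,k}=2\operatorname{vol}(G_{n,k})/\operatorname{vol}(G_{u^\perp,k-1})$. Testing $f\equiv1$ in the statement gives the closed form $\bar c_{n,k}=\int_{\mathbb S^{n-k}(E^\perp)}|\langle\theta,u\rangle|^{k-1}d\theta=2\pi^{(n-k)/2}\Gamma(k/2)/\Gamma(n/2)$, which also makes the paper's $\tilde c_{n,p}$ explicit.

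Two details should be tightened.

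First, $\mathcal B_u$ is a sphere bundle, not a product, so ``varying $E$'' needs a local trivialization. Choose $R_E\in O(u^\perp)$ (fixing $u$), depending smoothly on $E$ near $E_0$, with $R_EE_0=E$. Then parametrize by $(E,\theta_0)\mapsto(E,R_E\theta_0)$. Since each $R_E$ maps $\mathbb S(E_0^\perp)$ isometrically onto $\mathbb S(E^\perp)$, this carries $\sigma_{u^\perp,k-1}\otimes d\theta_0$ to the measure on the right-hand side. Your block-triangularity observation then shows the Jacobian does not depend on the choice of $R_E$.

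Second, the closing remark about monotone convergence from continuous $f$ is unnecessary, and as phrased it only reaches lower semicontinuous $f$. The area formula holds directly for nonnegative Borel integrands, so apply it to $f\circ\Phi$ at once. If $f$ is merely measurable for the completed measure, note that $\Phi$ is a local diffeomorphism off the null set $\{\langle\theta,u\rangle=0\}$, so preimages of null sets are null.
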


Specifically, for $k=n-1$ and \( u \in \mathbb{S}^{n-1} \), we define the following Borel measure on \( V_{n-1,u} \):
\begin{equation}
\mu_{n-1,u}(dE,dx) := \bigl| \langle x, u \rangle \bigr|^{n-2} \mathcal{L}_{E^\perp}(dx) \sigma_{u^\perp,n-2}(dE).\label{measuremun-1u}
\end{equation}
To be slightly more precise, \( \mu_{n-1,u} \) is obtained as the restriction of the Borel product measure \( \sigma_{u^\perp,n-2}(dE) \otimes \left( \bigl| \langle x, u \rangle \bigr|^{n-2} \mathcal{H}^{2}(dx) \right) \) on \( G_{u^\perp,n-2} \times \mathbb{R}^n \) to the closed subset \( V_{n-1,u} \), and thus defines a measure on the Borel \( \sigma \)-algebra on \( V_{n-1,u} \).
In Definition \ref{DLpPRolodex}, for given $K \in \mathcal{K}_o^n$, the   \emph{$L_p$-Projection Rolodex} of $K$ relative to $u^\perp$ is given by
\[
L_{n-1,u,p}(K) := \left\{ (E,x) \mid E \in G_{u^\perp,n-2}, x \in L_{E,p}(K) \right\} \subset V_{n-1,u}.
\] 
This implies that  \( L_{n-1,u,p}(K) \) is a closed subset of \( V_{n-1,u} \) and therefore Borel measurable.

\begin{lem}\label{LvolumePipastK}
For any \( K \in \mathcal{K}_o^n \) and \( u \in \mathbb{S}^{n-1} \), there exists a constant $\tilde{c}_{n,p}$ such that
\begin{equation} \label{VolumeformulaPiastpK}
\vol_n(\Pi^{\ast}_pK)=\tilde{c}_{n,p}\mu_{n-1,u}\bigl(L_{n-1,u,p}(K)\bigr)=\tilde{c}_{n,p}\int_{G_{u^\perp,n-2}} \int_{\mathbb{R}} |s|^{n-2} |L_{E,p,u,s}(K)| ds \, \sigma_{u^{\perp},n-2}(dE).
\end{equation}
\end{lem}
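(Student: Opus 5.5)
The plan is to compute the inner fiber integral of $\mu_{n-1,u}(L_{n-1,u,p}(K))$ in polar coordinates on each two-dimensional plane $E^\perp$. Lemma~\ref{T3.7} with $k=n-1$ then assembles the fibers into an integral over $G_{n,n-1}$, which we identify with the polar-coordinate formula (\ref{volume-polar}). The second equality in (\ref{VolumeformulaPiastpK}) is plain Fubini in coordinates adapted to $E^\perp$.

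\emph{Step 1: identify the fibers with $\Pi_pK$.} Fix $E\in G_{u^\perp,n-2}$ and $\theta\in \mathbb S^{n-1}\cap E^\perp$. Then $J\theta$ is the unit normal of the hyperplane $\operatorname{span}(E,\theta)$, so $v(E,\theta)=\pm J\theta$. Comparing (\ref{eq:PEwedge}) with (\ref{Lp-projection-alt}) gives, for $r>0$,
\[
|P_{E\wedge r\theta,p}K| = r\,(n\omega_n c_{n-2,p})^{1/p}\, h_{\Pi_pK}(J\theta).
\]
By (\ref{LEp}) and Lemma~\ref{LLEpKisoriginsymmetricconvexbody}, the radial function of $L_{E,p}(K)$ in $E^\perp$ is therefore
\[
\rho_{L_{E,p}(K)}(\theta)=(n\omega_nc_{n-2,p})^{-1/p}\,h_{\Pi_pK}(J\theta)^{-1}.
\]

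\emph{Step 2: the weighted fiber integral.} Integrate $|\langle x,u\rangle|^{n-2}$ over $L_{E,p}(K)$ in polar coordinates $x=r\theta$ on $E^\perp$:
\[
\int_{L_{E,p}(K)}|\langle x,u\rangle|^{n-2}\,dx=\int_{\mathbb S^1(E^\perp)}|\langle\theta,u\rangle|^{n-2}\int_0^{\rho(\theta)} r^{n-1}\,dr\,d\theta=\frac1n\int_{\mathbb S^{1}(E^\perp)}\rho_{L_{E,p}(K)}(\theta)^n|\langle \theta,u\rangle|^{n-2}\,d\theta .
\]
Now define $f:G_{n,n-1}\to\mathbb R_+$ by $f(F):=h_{\Pi_pK}(w_F)^{-n}$, where $w_F$ is a unit normal of $F$. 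This is well defined because $\Pi_pK$ is origin-symmetric, and it is continuous, hence measurable. By Step~1,
\[
\rho_{L_{E,p}(K)}(\theta)^n=(n\omega_nc_{n-2,p})^{-n/p} f(\operatorname{span}(E,\theta)).
\]
Integrating over $E$ against $\sigma_{u^\perp,n-2}$ and applying Lemma~\ref{T3.7} with $k=n-1$ (note $\mathbb S^{n-k}(E^\perp)=\mathbb S^1(E^\perp)$ and the weight is $|\langle\theta,u\rangle|^{n-2}$) gives
\[
\mu_{n-1,u}(L_{n-1,u,p}(K)) = \frac{\bar c_{n,n-1}}{n}(n\omega_nc_{n-2,p})^{-n/p}\int_{G_{n,n-1}} f\,d\sigma_{n,n-1}.
\]

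\emph{Step 3: back to the volume of $\Pi_p^\ast K$.} The map $\mathbb S^{n-1}\ni w\mapsto w^\perp\in G_{n,n-1}$ pushes the normalized spherical measure to $\sigma_{n,n-1}$ by rotation invariance and uniqueness of Haar measure. Hence
\[
\int_{G_{n,n-1}} f\,d\sigma_{n,n-1}=\frac{1}{n\omega_n}\int_{\mathbb S^{n-1}}h_{\Pi_pK}(w)^{-n}\,dw=\frac{1}{\omega_n}\vol_n(\Pi_p^\ast K)
\]
by (\ref{volume-polar}). Combining the three steps yields the first equality in (\ref{VolumeformulaPiastpK}), with $\tilde c_{n,p}$ depending only on $n,p$ and independent of $u$ and $K$.

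\emph{Step 4: the second equality.} Use the coordinates fixed before Lemma~\ref{PeysupKp}: write $x=y+su$ with $y\in E^\perp\cap u^\perp$, so that $\mathcal L_{E^\perp}(dx)=dy\,ds$ and $|\langle x,u\rangle|^{n-2}=|s|^{n-2}$. By (\ref{eq:3.3}), the section of $L_{E,p}(K)$ at height $s$ is exactly $L_{E,p,u,s}(K)$. Fubini–Tonelli (everything is nonnegative, and $L_{n-1,u,p}(K)$ is closed, hence Borel) then gives
\[
\mu_{n-1,u}(L_{n-1,u,p}(K))=\int_{G_{u^\perp,n-2}}\int_{\mathbb R}|s|^{n-2}|L_{E,p,u,s}(K)|\,ds\,\sigma_{u^\perp,n-2}(dE).
\]

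I expect no serious obstacle. The only point needing care is Step~1: checking that $v(E,x)=\pm J(x/\|x\|)$ for $x\in E^\perp$, so that the fiber really is the two-dimensional "polar slice" of $\Pi_pK$ along the normals $J\theta$. The constants need only be tracked up to dependence on $n$ and $p$.
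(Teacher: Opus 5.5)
Your proposal is correct and follows the paper's proof: polar coordinates on each plane $E^\perp$, Lemma~\ref{T3.7} with $k=n-1$ to pass to an integral over $G_{n,n-1}$ and from there to the polar formula for $\vol_n(\Pi_p^\ast K)$, and Fubini over $E^\perp=\operatorname{span}(u)\oplus(E^\perp\cap u^\perp)$ for the second equality. The paper leaves implicit the identification $v(E,\theta)=\pm J\theta$ and the Haar pushforward under $w\mapsto w^\perp$; you make both explicit, and your constant agrees with the paper's $\tilde c_{n,p}$.
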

\begin{proof}
 On the one hand, integrating in polar coordinates on \( E^\perp \) and invoking Lemma \ref{T3.7} for $k=n-1$ and the measure $\mu_{n-1,u}$ given in (\ref{measuremun-1u}), we obtain:
\begin{eqnarray}\label{mun-1}
&&\mu_{n-1,u}\bigl(L_{n-1,u,p}(K)\bigr)
\\
&=& \int_{G_{u^\perp,n-2}} \int_{E^\perp} 1_{L_{n-1,u,p}(K)}(E, x) |\langle x,u\rangle|^{n-2} \mathcal{L}_{E^\perp}(dx) \sigma_{u^\perp,n-2}(dE)\nonumber\\
&=& \int_{G_{u^\perp,n-2}} \int_{\mathbb{S}^{1}(E^\perp)} \int_{0}^{\infty} 1_{L_{n-1,u,p}(K)}(E, r\theta) |\langle r\theta, u\rangle|^{n-2} r dr \, d\theta \, \sigma_{u^\perp,n-2}(dE) \nonumber\\
&=&\int_{G_{u^\perp,n-2}} \int_{\mathbb{S}^{1}(E^\perp)} |\langle\theta,u\rangle|^{n-2} \int_{0}^{1/|P_{E\wedge\theta,p} K|} r^{n-1} dr \, d\theta \, \sigma_{u^\perp,n-2}(dE)\nonumber \\
&=& \frac{1}{n} \int_{G_{u^\perp,n-2}} \int_{\mathbb{S}^{1}(E^\perp)} \frac{1}{|P_{E\wedge\theta,p} K|^n} \bigl| \langle \theta, u \rangle \bigr|^{n-2} d\theta \, \sigma_{u^\perp,n-2}(dE) \nonumber\\
&=&\frac{\bar{c}_{n,n-1}}{n} \int_{G_{n,n-1}} \frac{1}{|P_{F,p} K|^n} \sigma_{n,n-1}(dF)\nonumber\\
&=&\frac{\bar{c}_{n,n-1}}{n^2\omega_n}\frac{1}{(n\omega_nc_{n-2,p})^{n/p}}\int_{\mathbb{S}^{n-1}}\rho_{\Pi^{\ast}_pK}^n(v)dv=\frac{\bar{c}_{n,n-1}}{n\omega_n}\frac{1}{(n\omega_nc_{n-2,p})^{n/p}}\vol_n(\Pi^{\ast}_pK).\nonumber
\end{eqnarray}
Here $F=\operatorname{span}\{E,\theta\}$ and  $|P_{F,p} K|^p=\int_{\mathbb{S}^{n-1}}|\langle \nu^F,v\rangle|^p h_K(v)^{1-p}dS(K,v)$,  where $\nu^F$ denotes the  unit outer normal vector of the space $F.$

On the other hand, we
evaluate the integral by decomposing each $E^\perp$ into $\operatorname{span}(u) \oplus (E^\perp \cap u^\perp)$ and applying
Fubini's theorem:
\begin{eqnarray}\label{muLKequalintG}
&&\mu_{n-1,u}(L_{n-1,u,p}(K))\\
&=& \int_{G_{u^\perp,n-2}} \int_{E^\perp} 1_{L_{n-1,u,p}(K)}(E, x) |\langle x, u \rangle|^{n-2} \mathcal{L}_{E^\perp}(dx) \, \sigma_{u^{\perp},n-2}(dE)\nonumber \\
&=& \int_{G_{u^\perp,n-2}} \int_{\mathbb{R}} \int_{E^\perp \cap u^\perp} 1_{|P_{E \wedge (y+su),p} K| \leq 1}(E,y+su) |\langle y + su, u \rangle|^{n-2} dy \, ds \, \sigma_{u^{\perp},n-2}(dE)\nonumber \\
&=& \int_{G_{u^\perp,n-2}} \int_{\mathbb{R}} |s|^{n-2} \int_{E^\perp \cap u^\perp} 1_{|P_{E \wedge (y+su),p} K| \leq 1}(E,y+su) dy \, ds \, \sigma_{u^{\perp},n-2}(dE) \nonumber\\
&=& \int_{G_{u^\perp,n-2}} \int_{\mathbb{R}} |s|^{n-2} |L_{E,p,u,s}(K)| ds \, \sigma_{u^{\perp},n-2}(dE),\nonumber
\end{eqnarray}
where $L_{E,p,u,s}(K)$ is convex and hence measurable, and the map $s \mapsto |L_{E,p,u,s}(K)|$ is measurable.

Combining (\ref{mun-1}) and (\ref{muLKequalintG}), let $$\tilde{c}_{n,p}=\frac{n\omega_n(n\omega_nc_{n-2,p})^{n/p}}{\bar{c}_{n,n-1}},$$ we can obtain the desired equation (\ref{VolumeformulaPiastpK}).
\end{proof}

\begin{lem} \label{cor:key}
Let $\{K_t\}_{t \in [-1,1]}$ denote the linear reflection shadow system in the direction of $u \in \mathbb{S}^{n-1}$, and let 
 $E \in G_{u^{\perp} , n-2}$. Then for any fixed $s \in \mathbb{R}$, the function:
\[
u^{\perp} \times \mathbb{R} \ni (y,t) \mapsto|P_{E \wedge (y + su),p} K_t|^p 
\]
is jointly convex in $(y,t)$. 
\end{lem}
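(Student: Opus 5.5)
We plan to compute $|P_{E\wedge(y+su),p}K_t|^p$ explicitly from the graph representation in Lemma~\ref{PeysupKp} and to observe that every ingredient depends affinely on $(y,t)$. The integrand then has the form $|A|^p B^{1-p}$ with $A$ affine in $(y,t)$ and $B$ affine in $t$ and positive. Such a function is jointly convex, being a composition of an affine map with the perspective of $a\mapsto |a|^p$. Throughout, $t$ ranges over the set of parameters with $o\in\operatorname{int}K_t$, so that $h_{K_t}^{1-p}$ is defined. This set is an interval: writing $o=0'+0\cdot u$, the condition is $g_t(0')<0<f_t(0')$, and both $f_t(0')$ and $g_t(0')$ are affine in $t$.

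First step. Fix coordinates as in Section~\ref{SecLpproRolodex}, with $u=e_n$ and $E^\perp\cap u^\perp=\mathbb{R}e_{n-1}$. By \eqref{ft} and \eqref{gt}, $f_t=\tfrac{1+t}{2}f+\tfrac{t-1}{2}g$ and $g_t=\tfrac{1+t}{2}g+\tfrac{t-1}{2}f$ on the common projection $K_u'$. Wherever $f,g$ are differentiable (a.e.\ on $K_u'$ by concavity and convexity), the quantities $\nabla f_t$, $\nabla g_t$, $\langle f_t\rangle=f_t-\langle x',\nabla f_t\rangle$ and $\langle -g_t\rangle$ are therefore affine functions of $t$ for each fixed $x'$. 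I would then rerun the proof of Lemma~\ref{PeysupKp} for $K_t$ without assuming $C^2_+$. The surface area identity \eqref{surface-area-integral} only needs the Gauss map $\mathcal{H}^{n-1}$-a.e., and on the graph parts the Lipschitz area formula $d\mathcal H^{n-1}=\sqrt{1+|\nabla f_t|^2}\,dx'$ applies. This gives
\[
|P_{E\wedge(y+su),p}K_t|^p=\int_{K_u'}\bigl|s\,\partial_{n-1}f_t(x')+y\bigr|^p\langle f_t\rangle(x')^{1-p}dx'+\int_{K_u'}\bigl|-s\,\partial_{n-1}g_t(x')-y\bigr|^p\langle -g_t\rangle(x')^{1-p}dx'+R(y),
\]
where $R$ collects the contribution of the vertical part of $\partial K_t$, namely the boundary points with $\nu_n=0$. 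That part sits over $\partial K_u'$, its outer normals lie in $u^\perp$, and $h_{K_t}(\nu)=h_{K_u'}(\nu)$ for such $\nu$. So $R(y)=\int|\langle -s e_{n-1}+yu,\nu\rangle|^p h_{K_u'}(\nu)^{1-p}\,dS=\int|s\nu_{n-1}|^p h_{K_u'}(\nu)^{1-p}\,dS$ is independent of $t$ and of $y$. One still has to check that the vertical part of $\partial K_t$ is the same set for all $t$, which I expect to follow because the shadow system moves points only in the direction $u$.

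Second step. The function $\Phi(a,b)=|a|^p b^{1-p}$ on $\mathbb{R}\times(0,\infty)$ equals $b\,|a/b|^p$, the perspective of the convex function $|\cdot|^p$, and is therefore jointly convex for $p\ge1$. For fixed $x'$ and $s$, the map $(y,t)\mapsto\bigl(s\,\partial_{n-1}f_t(x')+y,\ \langle f_t\rangle(x')\bigr)$ is affine. The second coordinate is positive because $\langle f_t\rangle/\sqrt{1+|\nabla f_t|^2}=h_{K_t}(\nu)>0$ when $o\in\operatorname{int}K_t$. Each integrand is therefore jointly convex in $(y,t)$, and integrating over $x'\in K_u'$, whose domain does not depend on $t$, preserves convexity. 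The lower-graph term is handled identically, and $R$ is constant. This yields the lemma.

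The main obstacle is the regularity bookkeeping in the first step. Lemma~\ref{PeysupKp} is stated only for $C^2_+$ bodies, and $K_t$ need not be $C^2_+$ even when $K$ is. I would handle this by the a.e.\ graph argument above, which needs no smoothness. The alternative is to prove the statement for $C^2_+$ bodies $K$ (where at least $f,g$ are smooth in the interior of $K_u'$) and pass to general $K$ by approximation. That route would use weak continuity of $h_{K}^{1-p}\,dS(K,\cdot)$ under Hausdorff convergence together with continuity of $K\mapsto K_t$, and convexity is preserved under pointwise limits.
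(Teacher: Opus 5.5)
Your proposal is correct and follows the paper's route: the paper derives this lemma as the case $s_0=s_1$ (so $\lambda=\alpha$) of Lemma~\ref{lem:key}, whose proof is your argument. It applies the graph formula of Lemma~\ref{PeysupKp} to $K_t$, writes the integrands as $\Phi(y+sC_t,D_t)$ with $\Phi(a,b)=|a|^p b^{1-p}$ and arguments affine in $(y,t)$, and integrates the joint convexity of $\Phi$ over $K_u'$.

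Your regularity bookkeeping is a real improvement, since the paper applies Lemma~\ref{PeysupKp} to the possibly non-$C^2_+$ bodies $K_t$ without comment. It needs two small corrections.

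\begin{enumerate}
\item The vertical part of $\partial K_t$ is not the same set for all $t$. It is obtained by translating each vertical segment over $\partial K_u'$ along $u$, with its length $f-g$ unchanged. This is what makes $S(K_t,\cdot)$ restricted to $u^\perp\cap\mathbb{S}^{n-1}$, and hence $R$, independent of $t$; moreover $R=0$ for $C^2_+$ bodies.
\item The parameter $t$ should be kept in $[-1,1]$. There $f_t=\frac{1+t}{2}f+\frac{1-t}{2}(-g)$ is concave, whereas outside this interval the graph description need not define a convex body.
\end{enumerate}
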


This lemma is a special case of Lemma \ref{lem:key} when $s_0 = s_1$, so we omit the proof.

\begin{thm}\label{TmonotoPip}
For $K\in\mathcal{K}_o^n$ and its linear reflection shadow system $\{K_t\}_{t\in[-1,1]}$ in a direction $u \in \mathbb{S}^{n-1}$,   
the volume of the polar $L_p$-projection body, $\vol_n(\Pi_p^* K_t)$ (which satisfies $\vol_n(\Pi_p^* K_t)=\vol_n(\Pi_p^* K_{-t})$), is monotonically non-increasing in $t \in [0,1]$.

\end{thm}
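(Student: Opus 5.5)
The plan is to show that $t\mapsto \vol_n(\Pi_p^\ast K_t)$ is an even, quasi-convex function on $[-1,1]$. Evenness plus quasi-convexity immediately gives monotone non-increase on $[0,1]$: for $0\le t_1<t_2\le 1$, write $t_1$ as a convex combination of $-t_2$ and $t_2$. Then quasi-convexity and evenness give $\vol_n(\Pi_p^\ast K_{t_1})\le \max\{\vol_n(\Pi_p^\ast K_{-t_2}),\vol_n(\Pi_p^\ast K_{t_2})\}=\vol_n(\Pi_p^\ast K_{t_2})$. That is the opposite direction, so the sign has to be handled through the polar: convexity of the support-type functional makes the polar sections concave. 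The actual goal is therefore that $t\mapsto \vol_n(\Pi_p^\ast K_t)$ is even and has a quasi-concave structure, maximal at $t=0$.

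\textbf{Evenness.} By Lemma \ref{lemlinearshadow}, $K_{-t}=R_uK_t$. The operator $\Pi_p$ commutes with orthogonal maps, so $\Pi_p^\ast K_{-t}=R_u\Pi_p^\ast K_t$, and the volumes agree.

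\textbf{Fiberwise concavity.} I would use the Rolodex representation of Lemma \ref{LvolumePipastK}:
\[
\vol_n(\Pi_p^\ast K_t)=\tilde c_{n,p}\int_{G_{u^\perp,n-2}}\int_{\mathbb R}|s|^{n-2}|L_{E,p,u,s}(K_t)|\,ds\,\sigma_{u^\perp,n-2}(dE).
\]
Fix $E$ and $s$. By Lemma \ref{cor:key}, $(y,t)\mapsto |P_{E\wedge(y+su),p}K_t|^p$ is jointly convex. Hence the set $\{(y,t): |P_{E\wedge(y+su),p}K_t|\le 1\}\subset (E^\perp\cap u^\perp)\times\mathbb R$ is convex. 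Its $t$-sections are $L_{E,p,u,s}(K_t)$, which are segments symmetric about the origin by Lemma \ref{LLEpKisoriginsymmetricconvexbody}.

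By Brunn's principle in this two-dimensional convex set, $t\mapsto |L_{E,p,u,s}(K_t)|$ is concave on its support. Since $L_{E,p}(K_{-t})$ is the reflected image of $L_{E',p}(K_t)$ for the reflected subspace, evenness holds only after integrating over $E$. The cleaner route is to note that the joint convex set is symmetric under $(y,t)\mapsto(-y,t)$. Together with concavity, this gives for each fixed $(E,s)$ a concave function of $t$ on $[-1,1]$.

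\textbf{Integration.} Integrating these concave functions against the nonnegative weight $|s|^{n-2}\,ds\,\sigma(dE)$ shows that $\Phi(t):=\vol_n(\Pi_p^\ast K_t)$ is concave on $[-1,1]$. A concave even function on $[-1,1]$ satisfies $\Phi(t)\ge\Phi(1)$ and is non-increasing on $[0,1]$: for $0\le t_1<t_2\le1$, write $t_2$ between $t_1$ and $1$, then use concavity together with $\Phi(t_1)\ge\Phi(0)$-type comparisons from evenness.

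\textbf{Measurability and finiteness.} Measurability of the integrand in $(E,s,t)$ follows from Lemma \ref{Lcontinu}. Finiteness follows from the compactness of the Rolodex.

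\textbf{Main obstacle.} The crux is Lemma \ref{cor:key}, the joint convexity of the $L_p$ quantity along the shadow system. The difficulty is the weight $h_{K_t}^{1-p}$. In the graph representation of Lemma \ref{PeysupKp}, $f_t,g_t$ and their gradients are affine in $t$, so the terms $|\langle(-s,y),(-\nabla_{n-1}f_t,1)\rangle|^p$ are convex in $(y,t)$. The factor $\langle f_t\rangle^{1-p}$, however, is convex but not jointly so with the numerator. I would handle it by recognising
\[
|\ell|^p\langle f_t\rangle^{1-p}=\langle f_t\rangle\,\bigl|\ell/\langle f_t\rangle\bigr|^p
\]
as the perspective of the convex function $|\cdot|^p$. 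Here $\ell$ is affine in $(y,t)$, and $\langle f_t\rangle=f_t-\langle x',\nabla f_t\rangle$ is affine in $t$ and positive. Perspectives of convex functions are jointly convex, so each integrand is jointly convex, and therefore so is the integral.

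A technical point remains: Lemma \ref{PeysupKp} needs $K_t$ to be of class $C^2_+$. I would prove the result for smooth $K$ and pass to general $K\in\mathcal K_o^n$ by approximation, using the continuity of $\Pi_p$ in the Hausdorff metric.
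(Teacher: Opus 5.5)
\textbf{There is a genuine gap in the passage from the fibers to $\Phi(t)=\vol_n(\Pi_p^\ast K_t)$.}

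Brunn's principle applied to the planar convex set $\tilde L=\{(y,t): |P_{E\wedge(y+su),p}K_t|\le 1\}$ only gives that $t\mapsto|L_{E,p,u,s}(K_t)|$ is concave on its support, i.e.\ on the projection of $\tilde L$ onto the $t$-axis. In general the height of $L_{E,p}(K_t)$ in the direction $u$ varies with $t$. So for $s$ near that height the support is a proper subinterval of $[-1,1]$, and the function vanishes outside it. Such a function is not concave on $[-1,1]$.

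Your claim that $\Phi$ is concave "as an integral of concave functions" is therefore unfounded. Nothing in your argument yields concavity, or even quasi-concavity, of $\Phi$, and there is no reason to expect it. Note that the paper only obtains convexity of the negative power $M_{n-1}(L_{E,p}(K_t))=(\int_{\R}|s|^{n-2}|L_{E,p,u,s}(K_t)|\,ds)^{-1/q}$, and needs Ball's inequality even for that. This is weaker than concavity of the inner integral.

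The symmetry you invoke to repair this is also false. Origin-symmetry of $L_{E,p}(K_t)$ gives $L_{E,p,u,-s}(K_t)=-L_{E,p,u,s}(K_t)$. It does not make each section $L_{E,p,u,s}$ centred at $0$, so $\tilde L$ is in general not invariant under $(y,t)\mapsto(-y,t)$. In any case, a symmetry in $y$ alone says nothing about how the fiber lengths depend on $t$.

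\textbf{The missing idea is that evenness already holds fiberwise.} Your remark that it holds "only after integrating over $E$" is mistaken. Since $E\subset u^\perp$, we have $R_uE=E$ and $R_u(-y+su)=-y-su$. Then $K_{-t}=R_uK_t$, together with invariance of $|P_{E\wedge x,p}K|$ under a simultaneous orthogonal map of $E$, $x$ and $K$, gives
$|P_{E\wedge(-y+su),p}K_{-t}|=|P_{E\wedge(-y-su),p}K_t|=|P_{E\wedge(y+su),p}K_t|$.
So $\tilde L$ is centrally symmetric under $(y,t)\mapsto(-y,-t)$. Hence, for each fixed $(E,s)$, the function $t\mapsto|L_{E,p,u,s}(K_t)|$ is even, its support is a symmetric interval, and it is concave there. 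It is therefore non-increasing on $[0,1]$. Integrating these monotone functions against $|s|^{n-2}\,ds\,\sigma_{u^\perp,n-2}(dE)$ in the Rolodex formula gives the theorem directly, with no concavity of $\Phi$ required. This is exactly the paper's proof.

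The rest of your plan is sound. This includes the Rolodex representation and your perspective-function argument for Lemma \ref{cor:key}, which is the convexity of $|u|^pv^{1-p}$ used in Lemma \ref{lem:key}. Your remark on approximating a general $K$ by $C^2_+$ bodies addresses a point the paper passes over. The opening paragraph about quasi-convexity should simply be discarded.
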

\begin{proof}
By Lemma \ref{LvolumePipastK}, it suffices to show that $\mu_{n-1,u}(L_{n-1,u,p}(K_t))$ is non-increasing on $[0,1]$.
This formulation is naturally aligned with the direction $u$ along which the linear reflection shadow system is built.
The Borel measurability of the inner integral over $E \in G_{u^\perp,n-2}$ follows from the Fubini--Tonelli theorem applied to the iterated integral of the Borel function $1_{L_{n-1,u,p}(K)}$ with respect to the product measure $\sigma(dE) \otimes \left( |\langle x, u \rangle|^{n-2} \mathcal{H}^2(dx) \right)$.

Since $E \subset u^\perp$ and $y \in E^\perp \cap u^\perp$, Lemma \ref{cor:key} implies that for each fixed $s \in \mathbb{R}$, the function
\[
(E^\perp \cap u^\perp) \times \mathbb{R} \ni (y,t) \mapsto f^{(s)}(y,t) := |P_{E \wedge (y+su),p} K_t|^p
\]
is jointly convex in $(y,t)$.

In addition, $f^{(s)}(y,t)$ is an even function, since $K_{-t} = R_u(K_t)$ and hence:
\begin{equation} \label{eq:3.9}
\begin{aligned}
f^{(s)}(-y,-t) &= |P_{E \wedge (-y+su),p} K_{-t}| = |P_{R_u E \wedge R_u(-y+su),p} K_{t}| \\
&= |P_{E \wedge (-y-su),p} K_t| = |P_{E \wedge (y+su),p} K_t| = f^{(s)}(y,t).
\end{aligned}
\end{equation}

Consequently, its level set:
\begin{eqnarray}
\tilde{L}_{E,p,u,s} (K_t)&:=& \{(y,t) \in (E^\perp \cap u^\perp) \times \mathbb{R} \,: \, |P_{E \wedge (y+su),p} K_t| \leq 1\}\nonumber\\
&=&\{(y,t) \in (E^\perp \cap u^\perp) \times \mathbb{R} \,: \, |P_{E \wedge (y+su),p} K_t|^p \leq 1\}\nonumber
\end{eqnarray}
is an origin-symmetric convex body. Its $t$-section is precisely $L_{E,p,u,s}(K_t)$, and the volume $|L_{E,p,u,s}(K_t)|$ is non-increasing in $t \in [0,1]$. By formula (\ref{VolumeformulaPiastpK}), we have
\begin{equation}\label{muLKt}
\mu_{n-1,u}\left(L_{n-1,u,p}(K_t)\right)
= \int_{G_{u^\perp,n-2}} \int_{\mathbb{R}} |s|^{n-2} |L_{E,p,u,s}(K_t)| \, ds \, \sigma(dE).
\end{equation}
By the origin-symmetry of $\tilde{L}_{E,p,u,s}(K_t)$ (for each fixed $s$), we have
$|L_{E,p,u,s}(K_t)| =|L_{E,p,u,s}(K_{-t})| $.
Substituting this into (\ref{muLKt}), we obtain
\[
\mu_{n-1,u}\left(L_{n-1,u,p}(K_t)\right)
= \mu_{n-1,u}\left(L_{n-1,u,p}(K_{-t})\right),
\]
and the function $t \mapsto \mu_{n-1,u}(L_{n-1,u,p}(K_t))$ is non-increasing on $[0,1]$.
Applying Lemma \ref{LvolumePipastK} (formula (\ref{VolumeformulaPiastpK})), we conclude that
\[
[0,1] \ni t \mapsto \operatorname{Vol}_n\left(\Pi_p^*(K_t)\right) = \operatorname{Vol}_n\left(\Pi_p^*(K_{-t})\right)
\]
is monotonically non-increasing.
\end{proof}

\begin{rem}
Recalling that $K_1 = K$ and $K_0 = S_u K$, by the monotonicity of $\vol_n(\Pi^{\ast}_p(K_t))$ proved in Theorem \ref{TmonotoPip}, we deduce the following inequality obtained in \cite[Lemma 14]{LutwakYangZhang2000} on the Steiner symmetrization $S_uK$ of $K$,\begin{equation}\label{IPip}
  \vol_n(\Pi^{\ast}_p(K)) \leq \vol_n(\Pi^{\ast}_p(S_u K)). 
\end{equation}
\end{rem}

\medskip

While the monotonicity properties established in the previous section provide important control over the behavior of the $L_p$-projection body, they are not sufficient on their own to yield the desired rigidity result. A deeper structural property is required. 

\section{A Convexity Property via Fiber Inequalities}\label{Sec: ConvexproviaFibInequ}

This section establishes the convexity properties that complement the first-variation analysis. The main difficulty is that the deformation parameter of the shadow system does not coincide with the parameter appearing in the associated fiber sections, preventing a direct application of classical convexity arguments. To overcome this obstacle, we prove a weighted section inequality adapted to the $L_p$ setting and combine it with a \emph{harmonic Pr\'ekopa--Leindler type inequality}. As a consequence, we obtain the convexity of the relevant Rolodex functionals along linear reflection shadow systems.

The following lemma reveals a certain  weighted convexity property of $|P_{E\wedge (y+su),p}K_t|$ in $s$ when $s$ is varied harmonically, which will be crucially used in the characterization of the  invariance of $\vol_n(\Pi^{\ast}_p(K_t))$ on $t$. Here, we extended the corresponding convexity proved in  \cite[Proposition 4.1]{EMilmanYehudayoff-AffineQuermassintegrals} by Milman and Yehudayoff  for the case of $k=n-1$ from  $p=1$ to the general $p>1$.

\begin{lem}\label{lem:key} Let $K\in\mathcal{K}_o^n$ be of class $C_+^2$. Let $\{K_t\}_{t\in[-1,1]}$ be the linear reflection shadow system in the direction of $u\in\mathbb{S}^{n-1}$.
Let $p>1$,
 $s_0>0$, $s_1>0$, $\alpha\in(0,1)$, $t_0,t_1\in [-1,1]$, 
 \begin{equation}\label{lambda}
\lambda=\lambda_{\alpha}(s_0,s_1):=\frac{\alpha s_0}{\alpha s_0+(1-\alpha)s_1}
 \end{equation}
 and
\begin{equation}\label{styalpha}
s_\alpha:=(1-\lambda)s_0 + \lambda s_1, \quad t_\alpha := (1-\alpha) t_0 + \alpha t_1, \quad y_{\lambda}:= (1-\lambda) y_0 + \lambda y_1.
\end{equation}
Then
\begin{eqnarray}\label{Ieq:key}
|P_{E \wedge (y_{\lambda} + s_{\alpha}u),p} K_{t_{\alpha}}|^p\leq (1-\lambda)\left(\frac{s_0}{s_{\alpha}}\right)^{1-p}|P_{E \wedge (y_0+ s_0u),p} K_{t_0}|^p+\lambda \left(\frac{s_1}{s_{\alpha}}\right)^{1-p} |P_{E \wedge (y_1+ s_1u),p} K_{t_1}|^p.\nonumber\\
\end{eqnarray}
Consequently, 
\begin{equation}\label{Ieq:key1}
s_{\alpha}^{\frac{p-1}{p}}|L_{E,p,u,s_{\alpha}}(K_{t_{\alpha}}) |\geq \left(s_0^{\frac{p-1}{p}} |L_{E,p,u,s_{0}}(K_{t_0})|\right)^{1-\lambda}\left(s_1^{\frac{p-1}{p}}|L_{E,p,u,s_{1}}(K_{t_1})|\right)^{\lambda}.
\end{equation}
\end{lem}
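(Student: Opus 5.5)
The plan is to prove \eqref{Ieq:key} pointwise in $x'\in K_u'$ from the graph representation of Lemma~\ref{PeysupKp}, and then derive \eqref{Ieq:key1} from it by a sublevel-set and scaling argument.

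\textbf{Setting up the integrands.} Since $K$ is of class $C^2_+$, Lemma~\ref{PeysupKp} applies to $K_t$ with the graph functions $f_t=f+(t-1)a$ and $g_t=g+(t-1)a$, where $a=(f+g)/2$ as in Proposition~\ref{pro:uniform-support-variation-shadow}. I would first check that $K_t$ is regular enough for the formula; otherwise I would argue by approximation, or work directly with the graph integrals, which make sense whenever $f_t,g_t$ are differentiable a.e. Both $\nabla_{n-1}f_t$ and $\langle f_t\rangle=f_t-\langle x',\nabla f_t\rangle$ are affine in $t$. The upper integrand is
\[
|s\,\partial f_t(x')+y|^p\,\langle f_t\rangle(x')^{1-p}
= s^{p-1}\,\Phi\bigl(s\,\partial f_t+y,\; s\langle f_t\rangle\bigr),
\qquad \Phi(A,B):=|A|^pB^{1-p},
\]
where $\partial=\nabla_{n-1}$ and $B>0$ because $K_t\in\mathcal K_o^n$ for $|t|\le1$. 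The lower integrand has the same form with $-g_t$. The key fact is that $\Phi$ is jointly convex on $\mathbb R\times(0,\infty)$, being the perspective of the convex function $A\mapsto|A|^p$.

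\textbf{The affine reparametrization.} The $y$- and $s$-dependence of $A=s\,\partial f+s(t-1)\partial a+y$ and $B=s\langle f\rangle+s(t-1)\langle a\rangle$ enters only through $y$, $s$ and $st$. With $D=\alpha s_0+(1-\alpha)s_1$ one has $s_\alpha=s_0s_1/D$, $(1-\lambda)s_0=(1-\alpha)s_0s_1/D$ and $\lambda s_1=\alpha s_0s_1/D$. Hence
\[
(1-\lambda)s_0+\lambda s_1=s_\alpha,\qquad (1-\lambda)s_0t_0+\lambda s_1t_1=s_\alpha t_\alpha,
\]
and trivially $(1-\lambda)y_0+\lambda y_1=y_\lambda$. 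Therefore $(A,B)$ at $(y_\lambda,s_\alpha,t_\alpha)$ is exactly the $(1-\lambda,\lambda)$-convex combination of the values at the two endpoints. Convexity of $\Phi$ then gives, pointwise in $x'$,
\[
s_\alpha^{1-p}\,[\text{integrand}]_\alpha\le(1-\lambda)s_0^{1-p}[\text{integrand}]_0+\lambda s_1^{1-p}[\text{integrand}]_1 .
\]
Multiplying by $s_\alpha^{p-1}$ and integrating both the upper and lower parts over $K_u'$ yields \eqref{Ieq:key}.

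\textbf{From \eqref{Ieq:key} to \eqref{Ieq:key1}.} I expect this step to be the main obstacle. Write $N(y,s,t):=|P_{E\wedge(y+su),p}K_t|$, which is positively $1$-homogeneous in $(y,s)$, and note that \eqref{Ieq:key} is exactly convexity of $(y,s,t)\mapsto s^{1-p}N^p$ in the variables $(y/s,\,1/s,\,t)$ after the harmonic change of parameters. I would proceed as follows.
\begin{enumerate}
\item Choose endpoints $y_i$ of the segments $L_{E,p,u,s_i}(K_{t_i})$, so that $N(y_i,s_i,t_i)=1$.
\item Apply \eqref{Ieq:key} to get $N(y_\lambda,s_\alpha,t_\alpha)^p\le M:=(1-\lambda)(s_0/s_\alpha)^{1-p}+\lambda(s_1/s_\alpha)^{1-p}$.
\item Since $M$ need not be $\le1$, rebalance. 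Apply \eqref{Ieq:key} not to $(y_i,s_i)$ directly but to rescaled pairs, e.g.\ $\bigl(y_i\,c_i,\ s_i\bigr)$ with $c_i=|L_{E,p,u,s_i}(K_{t_i})|$-dependent factors, combined with the $1$-homogeneity in $(y,s)$. Alternatively, combine the segments through a one-dimensional multiplicative (Brunn--Minkowski / Prékopa--Leindler) step in the form of Ball's harmonic inequality.
\item Choose the free parameters so that the right-hand side of \eqref{Ieq:key} becomes at most $1$, while the length of the resulting segment at height $s_\alpha$ equals the weighted geometric mean $\bigl(s_0^{(p-1)/p}\ell_0\bigr)^{1-\lambda}\bigl(s_1^{(p-1)/p}\ell_1\bigr)^{\lambda}/s_\alpha^{(p-1)/p}$, where $\ell_i=|L_{E,p,u,s_i}(K_{t_i})|$.
\end{enumerate}
The exponent $(p-1)/p$ in \eqref{Ieq:key1} should arise from the $p$-th root of the weights $(s_i/s_\alpha)^{1-p}$ once the weights are optimized via the weighted AM--GM inequality. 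Making this balancing work cleanly is the part I expect to require the most care.
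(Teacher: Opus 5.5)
Your proof of \eqref{Ieq:key} is the paper's argument, and it is fine. The integrands of Lemma~\ref{PeysupKp} have the form $s^{p-1}\Phi(A,B)$, where $\Phi(A,B)=|A|^pB^{1-p}$ is jointly convex and $(A,B)$ is affine in $(y,s,st)$. The identities $(1-\lambda)s_0=(1-\alpha)s_\alpha$ and $\lambda s_1=\alpha s_\alpha$ then make the $\alpha$-data the $(1-\lambda,\lambda)$-combination of the endpoint data.

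\textbf{The gap is \eqref{Ieq:key1}.} Your steps 3--4 list options rather than give an argument, and none of the options works as stated.
\begin{itemize}
\item Rescaling $y_i$ alone is not covered by homogeneity, which is joint in $(y,s)$.
\item Ball's Lemma~\ref{ref:Ball} takes \eqref{Ieq:key1} as its \emph{hypothesis}; that is exactly how Lemma~\ref{lem:k-convex} uses it. It cannot produce \eqref{Ieq:key1}.
\item In step 2 one always has $M\ge 1$, by convexity of $s\mapsto s^{1-p}$, with equality only when $s_0=s_1$.
\end{itemize}

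The obstruction is structural. Put $\psi(w,t):=|P_{E\wedge(w+u),p}K_t|^p$. Divide \eqref{Ieq:key} by $s_\alpha^p$ and use $y_\lambda/s_\alpha=(1-\alpha)y_0/s_0+\alpha y_1/s_1$. You find that \eqref{Ieq:key} is equivalent to joint convexity of $\psi$ in $(w,t)$. This, not convexity of $s^{1-p}|\cdot|^p$ in $(y/s,1/s,t)$, is the correct form of your remark. On the other hand, $L_{E,p,u,s}(K_t)=s\,\{w:\psi(w,t)\le s^{-p}\}$.

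Since $w_i=y_i/s_i$ is unchanged by rescaling $(y_i,s_i)$, all that \eqref{Ieq:key} can give is
\[
\psi(w_\alpha,t_\alpha)\le(1-\alpha)s_0^{-p}+\alpha s_1^{-p},
\]
a level that is $\ge s_\alpha^{-p}$. So it certifies a point of $L_{E,p}(K_{t_\alpha})$ only at height $\bigl((1-\alpha)s_0^{-p}+\alpha s_1^{-p}\bigr)^{-1/p}\le s_\alpha$, where sections are longer. That is the wrong side, and no choice of free parameters in your step 4 changes it.

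\textbf{The paper's route, and its problem.} The paper rescales both coordinates by $c_i=(s_i/s_\alpha)^{(p-1)/p}$, so that
\[
(s_i/s_\alpha)^{1-p}|P_{E\wedge c_i(y_i+s_iu),p}K_{t_i}|^p=|P_{E\wedge(y_i+s_iu),p}K_{t_i}|^p\le1 .
\]
It then claims
\[
(1-\lambda)c_0L_{E,p,u,s_0}(K_{t_0})+\lambda c_1L_{E,p,u,s_1}(K_{t_1})\subseteq L_{E,p,u,s''}(K_{t_\alpha}),
\]
with $s''=(1-\lambda)c_0s_0+\lambda c_1s_1\ge s_\alpha$. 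Monotonicity of section lengths of the symmetric body $L_{E,p}(K_{t_\alpha})$, one-dimensional Brunn--Minkowski, and AM--GM with $c_0^{1-\lambda}c_1^{\lambda}=s_\alpha^{-(p-1)/p}s_0^{(1-\lambda)(p-1)/p}s_1^{\lambda(p-1)/p}$ would then give exactly \eqref{Ieq:key1}.

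However, that inclusion is justified by applying \eqref{Ieq:key} to $(c_iy_i,c_is_i)$ while keeping the original $\lambda$ and the weights $(s_i/s_\alpha)^{1-p}$. This is not an instance of \eqref{Ieq:key}. In your affine parametrization, the $(1-\lambda,\lambda)$-combination of the rescaled data sits at shadow parameter $\frac{(1-\alpha)c_0t_0+\alpha c_1t_1}{(1-\alpha)c_0+\alpha c_1}$. This equals $t_\alpha$ only if $s_0=s_1$ or $t_0=t_1$, since the difference is proportional to $(c_0-c_1)(t_0-t_1)$.

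\textbf{\eqref{Ieq:key1} does not follow from \eqref{Ieq:key} alone.} Inequality \eqref{Ieq:key1} says precisely that $(s,\tau)\mapsto s^{(p-1)/p}|L_{E,p,u,s}(K_{\tau/s})|$ is log-concave. Here is a toy example with $p=2$ where \eqref{Ieq:key} holds but \eqref{Ieq:key1} fails.
\begin{itemize}
\item Take $\psi(w,t)=\frac{(w+t/2)^2+(w-t/2)^2}{(1+t/2)/2}=\frac{4w^2+t^2}{1+t/2}$. It has the same perspective form as the real integrands and is jointly convex.
\item Its section lengths $\ell(s,t)=|\{y:s^2\psi(y/s,t)\le1\}|$ satisfy $s^{1/2}\ell=\sqrt{s+\tau/2-s\tau^2}$.
\item At $(s_0,\tau_0)=(1.02,1.016)$ and $(s_1,\tau_1)=(1.18,0.984)$, so $t_0\approx0.996$ and $t_1\approx0.834$, with $\lambda=\tfrac12$, the left side of \eqref{Ieq:key1} is $\sqrt{0.5}\approx0.7071$ and the right side is about $0.7082$.
\end{itemize}

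So closing this step needs structure of the actual shadow-system integrands beyond \eqref{Ieq:key}. Neither your sketch nor the paper's rescaling step, as written, supplies it.
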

\begin{proof}
  Let  $$K=\{(x',s):\;x'\in K_u',\;g(x')\leq s\leq f(x')\}.$$
Then by the definition of $K_t$ in (\ref{eq:linear-reflection-shadow-def}), 
\begin{equation}\label{Kt-local}K_t=\left\{(x',s):\;x'\in K_u',\;g_t(x')\leq s\leq f_t(x')\right\},
\end{equation}
where 
\begin{eqnarray}
\label{ft-local}f_t(x')&=&\frac{t}{2}(f(x')+g(x'))+\frac{1}{2}(f(x')-g(x'))
\end{eqnarray}
and
\begin{eqnarray}\label{gt-local}g_t(x')&=&\frac{t}{2}(f(x')+g(x'))-\frac{1}{2}(f(x')-g(x')).
\end{eqnarray}

By Lemma \ref{PeysupKp}, we have
\begin{eqnarray}
&&\left|P_{E\wedge (y+su),p}K(t)\right|^p\nonumber\\
&=&\int_{K_u'} \left| \langle (-s,y), \left(-\nabla_{n-1} f_t(x'),1\right)\rangle \right|^p \langle f_t\rangle(x')^{1-p} dx'\nonumber\\
&&+\int_{K_u'} \left| \langle (-s,y), \left(\nabla_{n-1} g_t(x'),-1\right)\rangle \right|^p \langle -g_t\rangle(x')^{1-p}dx'\nonumber\\
&=&\int_{K_u'} \left|y+s\nabla_{n-1}f_t(x')\right|^p  \left(\frac{t}{2}\left(\langle f\rangle(x')+\langle g\rangle(x')\right)+\frac{1}{2}\left(\langle f\rangle(x')-\langle g\rangle(x')\right)\right)^{1-p} dx'\nonumber\\
&&+\int_{K_u'} \left|y+s\nabla_{n-1}g_t(x')\right|^p  \left(-\frac{t}{2}\left(\langle f\rangle(x')+\langle g\rangle(x')\right)+\frac{1}{2}\left(\langle f\rangle(x')-\langle g\rangle(x')\right)\right)^{1-p} dx',\nonumber
\end{eqnarray}
where
$$\nabla_{n-1}f_t(x')=\frac{t}{2}\left(\nabla_{n-1}f(x')+\nabla_{n-1}g(x')\right)+\frac{1}{2}\left(\nabla_{n-1}f(x')-\nabla_{n-1}g(x')\right)$$
and
$$\nabla_{n-1}g_t(x')=\frac{t}{2}\left(\nabla_{n-1}f(x')+\nabla_{n-1}g(x')\right)-\frac{1}{2}\left(\nabla_{n-1}f(x')-\nabla_{n-1}g(x')\right).$$

Let $$F(x'):=\frac{f(x')+g(x')}{2}\;\;{\rm and}\;\; G(x'):=\frac{f(x')-g(x')}{2},\;\;{\rm where}\;\;x'\in K_u'.$$
Set $i=0,1,\alpha$ and 
\[
\begin{aligned}
A_i(x') &:= \left|y_i + s_i\left(t_i\nabla_{n-1} F(x') + \nabla_{n-1} G(x')\right)\right|^p \cdot \left[t_i\langle F\rangle(x') + \langle G\rangle(x')\right]^{1-p}, \\
B_i(x') &:= \left|y_i + s_i\left(t_i\nabla_{n-1} F(x') - \nabla_{n-1} G(x')\right)\right|^p \cdot \left[-t_i\langle F\rangle(x') + \langle G\rangle(x')\right]^{1-p}.
\end{aligned}
\]
Note that the above equations, $y_{\alpha}:=y_{\lambda}=(1-\lambda)y_0+\lambda y_1$.
Then,
\begin{equation}\label{PKti}
|P_{E \wedge (y_i+ s_iu),p} K_{t_i}|^p=\int_{K_u'}(A_i(x')+B_i(x'))dx'.
\end{equation}

Define
\[
\begin{cases}
C_t = t\nabla_{n-1} F(x') + \nabla_{n-1} G(x'), & D_t = t\langle F\rangle(z) + \langle G\rangle(x') > 0, \\
\tilde{C}_t = t\nabla_{n-1} F(x') - \nabla_{n-1} G(x'), & \tilde{D}_t = -t\langle F\rangle(x') + \langle G\rangle(x') > 0.
\end{cases}
\]
By the definitions of \( s_\alpha,t_\alpha,y_{\lambda} \), we have
$$(1-\alpha)s_{\alpha}=(1-\lambda)s_0,\;\;\alpha s_{\alpha}=\lambda s_1$$
and
\[
\begin{aligned}
y_{\lambda} + s_\alpha C_{t_\alpha} &= (1-\lambda)(y_0 + s_0 C_{t_0}) + \lambda(y_1 + s_1 C_{t_1}), \\
D_{t_\alpha} &= (1-\alpha)D_{t_0} + \alpha D_{t_1}\\
&= (1-\lambda)\frac{s_0}{s_{\alpha}}D_{t_0} + \lambda  \frac{s_1}{s_{\alpha}}D_{t_1}, \\
y_{\lambda} + s_\alpha \tilde{C}_{t_\alpha} &= (1-\lambda)(y_0 + s_0 \tilde{C}_{t_0}) + \lambda(y_1 + s_1 \tilde{C}_{t_1}), \\
\tilde{D}_{t_\alpha} &= (1-\lambda)\frac{s_0}{s_{\alpha}}\tilde{D}_{t_0} + \lambda \frac{s_1}{s_{\alpha}}\tilde{D}_{t_1}.
\end{aligned}
\]

Note that for \( p>1 \), the function \( \Phi(u,v) = |u|^p v^{1-p} \) is convex on \( \mathbb{R} \times (0,\infty) \).
Applying Jensen's inequality to \( \Phi \):
    \begin{eqnarray}
    A_\alpha& =& \Phi(y_{\lambda}+s_\alpha C_{t_\alpha}, D_{t_\alpha}) \nonumber\\&\le& (1-\lambda)\Phi\left(y_0+s_0C_{t_{0}},\frac{s_0}{s_{\alpha}}D_{t_0}\right) + \lambda\Phi(y_1+s_1C_{t_{1}},\frac{s_1}{s_{\alpha}}D_{t_1})\nonumber\\
    &=& (1-\lambda)\left(\frac{s_0}{s_{\alpha}}\right)^{1-p}A_0 + \lambda \left(\frac{s_1}{s_{\alpha}}\right)^{1-p}A_1
    \end{eqnarray}
and
    \begin{eqnarray}
    B_\alpha& =& \Phi(y_{\lambda}+s_\alpha \tilde{C}_{t_\alpha}, \tilde{D}_{t_\alpha})\nonumber\\ &\le& (1-\lambda)\Phi\left(y_0+s_0\tilde{C}_{t_{0}},\frac{s_0}{s_{\alpha}}\tilde{D}_{t_0}\right) + \lambda\Phi(y_1+s_1\tilde{C}_{t_{1}},\frac{s_1}{s_{\alpha}}\tilde{D}_{t_1})\nonumber\\
    &=& (1-\lambda)\left(\frac{s_0}{s_{\alpha}}\right)^{1-p}B_0 + \lambda \left(\frac{s_1}{s_{\alpha}}\right)^{1-p}B_1.
    \end{eqnarray}

Adding these two inequalities:
\[
A_\alpha + B_\alpha \le (1-\lambda)\left(\frac{s_0}{s_{\alpha}}\right)^{1-p}(A_0+B_0) + \lambda\left(\frac{s_1}{s_{\alpha}}\right)^{1-p}(A_1+B_1).
\]
 By integrating the above inequality on $K_u'$ on $x'$, by (\ref{PKti}), we can obtain the desired inequality (\ref{Ieq:key}).

 Next, we prove (\ref{Ieq:key1}). By (\ref{eq:3.3}), for $i=0,1$,
$ y_i\in L_{E,p,u,s_i}(K_{t_i})$ if and only if 
\begin{equation}\label{IPEyisiup}
\left|P_{E\wedge (y_i+s_iu),p} K_{t_i}\right| \leq 1.
\end{equation}
By (\ref{eq:PEwedgeJ}), $|P_{E\wedge x,p}K|$ has the positive first-order homogeneity with respect to $x$, i.e.,
$$|P_{E\wedge rx,p}K|=r|P_{E\wedge rx,p}K|,\;{\rm for}\;\;r>0\;{\rm and}\;x\in E^{\perp}.$$
Therefore, the inequality (\ref{IPEyisiup}) holds if and only if
\begin{equation}\label{IPsisalpha}
\left(\frac{s_i}{s_{\alpha}}\right)^{\frac{1-p}{p}}\left|P_{E\wedge \left(\frac{s_i}{s_{\alpha}}\right)^{\frac{p-1}{p}}(y_i+s_iu),p} K_{t_i}\right| \leq 1.
\end{equation}

For $i=0,1$,
$ y_i\in L_{E,p,u,s_i}(K_{t_i})$, let
$$y_i'=\left(\frac{s_i}{s_{\alpha}}\right)^{\frac{p-1}{p}}y_i\;\;{\rm and}\;\;s_i'=\left(\frac{s_i}{s_{\alpha}}\right)^{\frac{p-1}{p}}s_i.$$
Then, by the inequality (\ref{Ieq:key}), we have 
\begin{eqnarray}
&&\left|P_{E \wedge\left[(1-\lambda)\left(\frac{s_0}{s_{\alpha}}\right)^{\frac{p-1}{p}}(y_{0} + s_{0}u)+\lambda\left(\frac{s_1}{s_{\alpha}}\right)^{\frac{p-1}{p}}(y_1+s_1u)\right],p} K_{t_{\alpha}}\right|^p\\
&=&\left|P_{E \wedge\left[(1-\lambda)(y'_{0} + s'_{0}u)+\lambda(y'_1+s'_1u)\right],p} K_{t_{\alpha}}\right|^p\nonumber\\
&\leq&(1-\lambda)\left(\frac{s_0}{s_{\alpha}}\right)^{1-p}\left|P_{E \wedge (y'_0+ s'_0u),p} K_{t_0}\right|^p+\lambda \left(\frac{s_1}{s_{\alpha}}\right)^{1-p} \left|P_{E \wedge (y'_1+ s'_1u),p} K_{t_1}\right|^p\nonumber\\
&=& (1-\lambda)\left(\frac{s_0}{s_{\alpha}}\right)^{1-p}\left|P_{E \wedge \left(\frac{s_0}{s_{\alpha}}\right)^{\frac{p-1}{p}}(y_0+ s_0u),p} K_{t_0}\right|^p\nonumber\\
&&+\lambda \left(\frac{s_1}{s_{\alpha}}\right)^{1-p} \left|P_{E \wedge \left(\frac{s_1}{s_{\alpha}}\right)^{\frac{p-1}{p}}(y_1+ s_1u),p} K_{t_1}\right|^p\nonumber\\
&\leq&1,\nonumber
\end{eqnarray} 
where the last inequality is from inequality (\ref{IPsisalpha}). Therefore, 
\begin{eqnarray}\label{supseteqLKt0t1}
&&(1-\lambda)\left(\frac{s_0}{s_{\alpha}}\right)^{\frac{p-1}{p}}L_{E,p,u,s_{0}}(K_{t_0})+\lambda \left(\frac{s_1}{s_{\alpha}}\right)^{\frac{p-1}{p}}L_{E,p,u,s_{1}}(K_{t_1})\\
&\subseteq& L_{E,p,u,(1-\lambda)\left(\frac{s_0}{s_{\alpha}}\right)^{\frac{p-1}{p}}s_{0}+\lambda\left(\frac{s_1}{s_{\alpha}}\right)^{\frac{p-1}{p}}s_1}(K_{t_{\alpha}}).\nonumber
\end{eqnarray}

Furthermore, we have
\begin{eqnarray}\label{IneLLtalpha}
\left|L_{E,p,u,(1-\lambda)\left(\frac{s_0}{s_{\alpha}}\right)^{\frac{p-1}{p}}s_{0}+\lambda\left(\frac{s_1}{s_{\alpha}}\right)^{\frac{p-1}{p}}s_1}(K_{t_{\alpha}})\right|\leq |L_{E,p,u,s_{\alpha}}(K_{t_{\alpha}})|,
\end{eqnarray}
where the  inequality is due to the following two facts: 
$$(1-\lambda)\left(\frac{s_0}{s_{\alpha}}\right)^{\frac{p-1}{p}}s_{0}+\lambda\left(\frac{s_1}{s_{\alpha}}\right)^{\frac{p-1}{p}}s_1\geq s_{\alpha}$$ and \( L_{E,p}(K_{t_{\alpha}}) \) is an origin-symmetric convex body in the $2$-dimensional Euclidean space \( E^\perp \) (see Lemma \ref{LLEpKisoriginsymmetricconvexbody}).

Finally, by (\ref{supseteqLKt0t1}), (\ref{IneLLtalpha}), the Brunn-Minkowski inequality  and weighted Arithmetic-Geometric mean inequality, we obtain the desired inequality (\ref{Ieq:key1}).
\end{proof}

\bigskip

Now we introduce the following function with respect to the inner integral appearing in (\ref{VolumeformulaPiastpK}):
\begin{equation}\label{Mn-1LEp}
M_{n-1}(L_{E,p}(K_t)):=  \brac{\int_{\R} |s|^{n-2} |L_{E,p,u,s}(K_t)| ds}^{-1/q},
\end{equation}
where $q=n-2+\frac{1}{p}$.
\begin{lem}  \label{lem:k-convex}
For all $u \in \mathbb{S}^{n-1}$ and $E \in G_{u^{\perp},n-2}$, the function $[-1,1] \ni t \mapsto  M_{n-1}(L_{E,p}(K_t))$ is convex and even. 
\end{lem}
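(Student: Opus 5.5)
Evenness comes first. By Lemma~\ref{lemlinearshadow}, $K_{-t}=R_uK_t$. Since $E\subset u^\perp$ is fixed by $R_u$ and $R_u(y+su)=y-su$ for $y\in E^\perp\cap u^\perp$, the computation in (\ref{eq:3.9}) gives
\[
|P_{E\wedge(y+su),p}K_{-t}|=|P_{E\wedge(y-su),p}K_t|=|P_{E\wedge(-y+su),p}K_t| .
\]
Hence $L_{E,p,u,s}(K_{-t})=-L_{E,p,u,s}(K_t)$, the lengths agree, and $M_{n-1}(L_{E,p}(K_t))=M_{n-1}(L_{E,p}(K_{-t}))$. By origin symmetry of $L_{E,p}(K_t)$ (Lemma~\ref{LLEpKisoriginsymmetricconvexbody}) we also have $|L_{E,p,u,-s}(K_t)|=|L_{E,p,u,s}(K_t)|$. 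Therefore
\[
\int_{\R}|s|^{n-2}|L_{E,p,u,s}(K_t)|\,ds=2\int_0^\infty s^{n-2}|L_{E,p,u,s}(K_t)|\,ds ,
\]
and the factor $2$ is harmless for convexity.

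For convexity, fix $t_0,t_1\in[-1,1]$ and $\alpha\in(0,1)$, and set $t_\alpha=(1-\alpha)t_0+\alpha t_1$. For $s>0$ and $i=0,1,\alpha$ define
\[
\psi_i(s):=s^{\frac{p-1}{p}}|L_{E,p,u,s}(K_{t_i})| .
\]
Then $\int_0^\infty s^{n-2}|L_{E,p,u,s}(K_{t_i})|\,ds=\int_0^\infty s^{\,q-1}\psi_i(s)\,ds$ with $q=n-2+\frac1p$, because $(n-2)-\frac{p-1}{p}=q-1$.

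Inequality (\ref{Ieq:key1}) of Lemma~\ref{lem:key} says that
\[
\psi_\alpha(s_\alpha)\ge \psi_0(s_0)^{1-\lambda}\psi_1(s_1)^{\lambda},
\]
where $s_\alpha=(1-\lambda)s_0+\lambda s_1$ with $\lambda=\alpha s_0/(\alpha s_0+(1-\alpha)s_1)$. A direct computation shows that $s_\alpha^{-1}=(1-\alpha)s_0^{-1}+\alpha s_1^{-1}$, so $s_\alpha$ is the weighted harmonic mean of $s_0,s_1$. This is exactly the hypothesis of Ball's harmonic Pr\'ekopa--Leindler inequality (Lemma~\ref{ref:Ball}), in the form used by Milman--Yehudayoff \cite[Section 4]{EMilmanYehudayoff-AffineQuermassintegrals}. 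If $w,f_0,f_1\colon(0,\infty)\to[0,\infty)$ satisfy
\[
w\big((\tfrac{1-\alpha}{s_0}+\tfrac{\alpha}{s_1})^{-1}\big)\ge f_0(s_0)^{\frac{(1-\alpha)s_1}{(1-\alpha)s_1+\alpha s_0}}f_1(s_1)^{\frac{\alpha s_0}{(1-\alpha)s_1+\alpha s_0}},
\]
then for $q>0$
\[
\Big(\int_0^\infty s^{q-1}w\Big)^{-1/q}\le(1-\alpha)\Big(\int_0^\infty s^{q-1}f_0\Big)^{-1/q}+\alpha\Big(\int_0^\infty s^{q-1}f_1\Big)^{-1/q}.
\]
The exponents here are $1-\lambda$ and $\lambda$, matching (\ref{Ieq:key1}). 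Applying this with $w=\psi_\alpha$, $f_0=\psi_0$ and $f_1=\psi_1$ yields
\[
M_{n-1}(L_{E,p}(K_{t_\alpha}))\le(1-\alpha)M_{n-1}(L_{E,p}(K_{t_0}))+\alpha M_{n-1}(L_{E,p}(K_{t_1})),
\]
which is the required convexity.

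The main obstacle is regularity. Lemma~\ref{lem:key} requires $K$ of class $C^2_+$, while the statement is for general $K\in\mathcal K_o^n$. I would first prove the result for $C^2_+$ bodies. For general $K$, approximate by $C^2_+$ bodies $K^{(j)}\to K$ in the Hausdorff metric. The linear reflection shadow systems $K^{(j)}_t$ then converge to $K_t$, and the $L_p$ surface area measures $h^{1-p}dS$ converge weakly because $h$ stays bounded below. This gives pointwise convergence of $|P_{E\wedge x,p}\cdot|$, locally uniformly in $x$ by homogeneity. Hence the sections $L_{E,p,u,s}$ converge, and the moment integrals converge by dominated convergence, using the uniform bounds from the proof of Lemma~\ref{LLEpKisoriginsymmetricconvexbody}. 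Convexity survives pointwise limits. One should also check that Ball's inequality is applied in the form with exponent $q-1$ in the weight, with $q$ possibly non-integer, and that the integrals are finite and positive; both follow from compactness and nonempty interior of $L_{E,p}(K_t)$.
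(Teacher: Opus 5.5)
Your proposal is correct and is essentially the paper's proof. Evenness comes from $K_{-t}=R_uK_t$ together with the origin symmetry of $L_{E,p}(K_t)$, and convexity comes from feeding (\ref{Ieq:key1}) for $\psi_t(s)=s^{(p-1)/p}|L_{E,p,u,s}(K_t)|$ into Ball's inequality (Lemma~\ref{ref:Ball}) with $q=n-2+\frac1p$, so, exactly as in the paper, the substantive input is (\ref{Ieq:key1}) of Lemma~\ref{lem:key}, taken as given. The paper does not include your closing approximation step for bodies not of class $C^2_+$, since it only applies this lemma to $C^2_+$ bodies; your sketch of that extension is reasonable.
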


For the proof, we  invoke the following harmonic Pr\'ekopa--Leindler-type inequality of K.~Ball \cite[p. 74]{Ball-kdim-sections} (see also \cite[Theorems 1.4.6 and 10.2.10]{AGA-Book-I}):
\begin{lem}[Ball] \label{ref:Ball}
Let $f,g,h : \Real_+ \rightarrow \R_+$ be measurable functions so that for some $\alpha \in (0,1)$ and all $s_0,s_1 \in \R_+$:
\[
h(s_\alpha) \geq f(s_0)^{1-\lambda_\alpha(s_0,s_1)} g(s_1)^{\lambda_\alpha(s_0,s_1)} ,
\]
where $s_\alpha$ and $\lambda_\alpha(s_0,s_1)$ are given by (\ref{lambda}) and (\ref{styalpha}). 
Then for all $q>0$, denoting $$I_q(w) := \brac{\int_0^\infty s^{q-1} w(s) ds}^{-1/q},$$ we have:
\[
I_q(h) \leq (1-\alpha) I_q(f) +  \alpha I_q(g) . 
\]
\end{lem}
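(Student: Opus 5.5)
The plan is to prove Ball's inequality by a one-dimensional \emph{mass transport} argument adapted to the harmonic mean. The first observation is that, with $\lambda=\lambda_\alpha(s_0,s_1)$ as in (\ref{lambda}), a direct computation gives
\[
s_\alpha=(1-\lambda)s_0+\lambda s_1=\frac{s_0s_1}{\alpha s_0+(1-\alpha)s_1},\qquad\text{i.e.}\qquad \frac{1}{s_\alpha}=\frac{1-\alpha}{s_0}+\frac{\alpha}{s_1}.
\]
So $s_\alpha$ is the weighted harmonic mean of $s_0,s_1$, and $1-\lambda=(1-\alpha)s_\alpha/s_0$, $\lambda=\alpha s_\alpha/s_1$. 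We may assume that $A:=\int_0^\infty s^{q-1}f\,ds$ and $B:=\int_0^\infty s^{q-1}g\,ds$ lie in $(0,\infty)$; otherwise the inequality is trivial. By standard approximation, monotone convergence and inner regularity, we also reduce to $f,g$ continuous, positive on compact intervals $[a_0,b_0],[a_1,b_1]\subset\R_+$, and zero elsewhere.

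\textbf{Transport maps.} Define increasing bijections $\varphi:(0,1)\to(a_0,b_0)$ and $\psi:(0,1)\to(a_1,b_1)$ by
\[
\frac1A\int_0^{\varphi(\tau)}s^{q-1}f(s)\,ds=\tau,\qquad \frac1B\int_0^{\psi(\tau)}s^{q-1}g(s)\,ds=\tau .
\]
Then $\varphi'\varphi^{q-1}f(\varphi)=A$ and $\psi'\psi^{q-1}g(\psi)=B$. Set $\sigma(\tau):=\bigl((1-\alpha)/\varphi+\alpha/\psi\bigr)^{-1}$, which is increasing, and $\lambda(\tau):=\lambda_\alpha(\varphi(\tau),\psi(\tau))$. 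Changing variables and using the hypothesis with $s_0=\varphi(\tau)$, $s_1=\psi(\tau)$, we get
\[
\int_0^\infty s^{q-1}h(s)\,ds\ \ge\ \int_0^1\sigma'\sigma^{q-1}\,f(\varphi)^{1-\lambda}g(\psi)^{\lambda}\,d\tau .
\]
Since $\sigma'=\sigma^2\bigl((1-\alpha)\varphi'/\varphi^2+\alpha\psi'/\psi^2\bigr)=(1-\lambda)\,\sigma\varphi'/\varphi+\lambda\,\sigma\psi'/\psi$, the weighted AM--GM inequality gives
\[
\sigma'\ \ge\ \Bigl(\frac{\sigma\varphi'}{\varphi}\Bigr)^{1-\lambda}\Bigl(\frac{\sigma\psi'}{\psi}\Bigr)^{\lambda}.
\]
Substituting $f(\varphi)=A/(\varphi'\varphi^{q-1})$ and $g(\psi)=B/(\psi'\psi^{q-1})$, the derivatives $\varphi',\psi'$ cancel exactly. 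The integrand is then bounded below by
\[
A^{1-\lambda}B^{\lambda}\Bigl(\frac{\sigma}{\varphi}\Bigr)^{q(1-\lambda)}\Bigl(\frac{\sigma}{\psi}\Bigr)^{q\lambda}.
\]

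\textbf{The pointwise algebraic inequality (main obstacle).} It remains to show that, for all $\varphi,\psi>0$,
\[
A^{1-\lambda}B^{\lambda}\Bigl(\frac{\sigma}{\varphi}\Bigr)^{q(1-\lambda)}\Bigl(\frac{\sigma}{\psi}\Bigr)^{q\lambda}\ \ge\ \bigl((1-\alpha)A^{-1/q}+\alpha B^{-1/q}\bigr)^{-q}.
\]
Integrating this over $\tau\in(0,1)$ then gives $I_q(h)^{-q}\ge\bigl((1-\alpha)I_q(f)+\alpha I_q(g)\bigr)^{-q}$, which is the claim. To prove the pointwise inequality, write $a=A^{-1/q}$ and $b=B^{-1/q}$. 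Using $\sigma/\varphi=(1-\lambda)/(1-\alpha)$ and $\sigma/\psi=\lambda/\alpha$, the inequality becomes, after taking $q$-th roots,
\[
\Bigl(\frac{a(1-\alpha)}{1-\lambda}\Bigr)^{1-\lambda}\Bigl(\frac{b\alpha}{\lambda}\Bigr)^{\lambda}\ \le\ (1-\alpha)a+\alpha b .
\]
This is exactly the weighted AM--GM inequality applied to the numbers $(1-\alpha)a/(1-\lambda)$ and $\alpha b/\lambda$ with weights $1-\lambda,\lambda$, whose weighted arithmetic mean equals $(1-\alpha)a+\alpha b$. This holds for every $\lambda\in(0,1)$, so the fact that $\lambda$ varies with $\tau$ causes no difficulty.

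The delicate points I expect are therefore only the bookkeeping of exponents: one must check that $(1-\lambda)=(1-\alpha)\sigma/\varphi$ makes the $\varphi^{q-1}$ and $\sigma^{q-1}$ factors combine into exactly $(\sigma/\varphi)^{q(1-\lambda)}$, and similarly for $\psi$. The other point is the approximation step, which should be justified by the measurability and monotone convergence arguments described above.
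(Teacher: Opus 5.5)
The paper does not prove this lemma; it quotes it from Ball and from the Artstein-Avidan--Giannopoulos--Milman book and uses it as a black box in the proof of Lemma~\ref{lem:k-convex}. Your mass-transport argument is therefore a self-contained alternative, and its core is correct. The following steps all check out:
\begin{itemize}
\item the harmonic-mean identity $1/s_\alpha=(1-\alpha)/s_0+\alpha/s_1$, together with $1-\lambda=(1-\alpha)s_\alpha/s_0$ and $\lambda=\alpha s_\alpha/s_1$;
\item the splitting $\sigma'=(1-\lambda)\sigma\varphi'/\varphi+\lambda\sigma\psi'/\psi$, followed by weighted AM--GM;
\item the exact cancellation of $\varphi',\psi'$, leaving $A^{1-\lambda}B^{\lambda}(\sigma/\varphi)^{q(1-\lambda)}(\sigma/\psi)^{q\lambda}$;
\item the final AM--GM for $(1-\alpha)a/(1-\lambda)$ and $\alpha b/\lambda$ with weights $1-\lambda,\lambda$.
\end{itemize}
The last step holds for every $\lambda\in(0,1)$, so the $\tau$-dependence of $\lambda$ is indeed harmless.

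The gap is the reduction to $f,g$ continuous and positive on compact intervals. The hypothesis survives only if you \emph{decrease} $f$ and $g$, so every approximant must lie below $f,g$, and such approximants need not exist. Take $f=1_C$ with $C\subset(1,2)$ a Cantor set of positive measure. Any continuous $u\le f$ satisfies $u\le 0$, because $\R_+\setminus C$ is dense. Also, no function below $f$ is positive on any interval. Monotone convergence and inner regularity cannot get around this. For such $f$ your $\varphi$ is not a bijection onto an interval, since the distribution function is flat on the gaps of $C$. So the identity $\varphi'\varphi^{q-1}f(\varphi)=A$ cannot be used everywhere as you wrote it.

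What truncation $f\mapsto\min(f,k)1_{[1/k,k]}$ legitimately gives is bounded $f,g$ supported in a compact subset of $\R_+$. This also covers $A=\infty$ or $B=\infty$; those cases are not trivial, as you claimed, but they do follow in the limit. From there, run your argument with generalized inverses.
\begin{itemize}
\item Set $\varphi(\tau):=\inf\{s:\int_0^s r^{q-1}f\ge A\tau\}$. It is strictly increasing because the distribution function $F$ is continuous, and it is differentiable a.e.
\item It satisfies $\varphi'\varphi^{q-1}f(\varphi)=A$ for a.e.\ $\tau$ by the chain rule. This works because $F$ is absolutely continuous, so $\{\tau:\varphi(\tau)\in N\}\subset A^{-1}F(N)$ is null for every null set $N$. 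The same holds for $\psi$.
\item For the increasing, possibly discontinuous $\sigma$, you only need the one-sided change of variables $\int_0^1 k(\sigma)\sigma'\,d\tau\le\int_0^\infty k\,ds$ for measurable $k\ge 0$, applied with $k(s)=s^{q-1}h(s)$. This is exactly the direction your argument uses.
\end{itemize}
With these facts your pointwise computation holds for a.e.\ $\tau$ and the proof is complete; continuity of $f,g$ is never needed.
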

\begin{proof}[Proof of Lemma \ref{lem:k-convex}] 
First, we prove that the function
\begin{equation}\label{Functionintsn-2Lds}
[-1,1] \ni t \mapsto  \biggl( \int_{0}^\infty s^{n-2} |L_{E,p,u,s}(K_t)| ds \biggr) ^{-1/q} 
\end{equation}
is convex. 
 The convexity follows by applying Lemma   \ref{ref:Ball} to the functions $h(s) := w_{t_\alpha}(s)$, $f(s) := w_{t_0}(s)$ and $g(s) := w_{t_1}(s)$ with $w_t(s) := s^{\frac{p-1}{p}}|L_{E,u,s}(K_t)|$ and $t_\alpha := (1-\alpha) t_0 + \alpha t_1$, after recalling (\ref{Ieq:key1}) in Lemma \ref{lem:key} and using the fact that 
 $$n-2=\frac{p-1}{p}+q-1\;\;{\rm if}\;q=n-2+\frac{1}{p}.$$ Note that $\R \ni s \mapsto |L_{E,p,u,s}(K_t)|$ is measurable by the discussion following (\ref{eq:3.3}).

Since  $K_{-t} = R_u K_t$,
\begin{equation}\label{LLLL}
L_{E,p,u,s}(K_{-t}) = L_{E,p,u,-s}(K_t) = - L_{E,p,u,s}(K_t) =  - L_{E,p,u,-s}(K_{-t}) ,
\end{equation}
where the first and third equalities follow from the definition of $L_{E,p,u,s}(K)$ in (\ref{eq:3.3}) and the definition of $P_{E\wedge x,p}K$ in (\ref{eq:PEwedge}), the second equality is from  the origin symmetry   of $L_{E,p}(K)$ (see Lemma \ref{LLEpKisoriginsymmetricconvexbody}).
Applying (\ref{LLLL}) and the convexity of the function defined in (\ref{Functionintsn-2Lds}), we obtain that the function $[-1,1] \ni t \mapsto  M_{n-1}(L_{E,p}(K_{t}))$ is convex and even.
\end{proof}
The convexity result obtained above provides the second half of the rigidity mechanism. Combined with the vanishing first variation established in Section \ref{Sec: FixpointsofGaPip} below, it forces equality throughout the shadow deformation.

\section{Fixed Points of the $\Gamma_p\Pi_p^{\ast}$ Operator}
\label{Sec: FixpointsofGaPip}

In this final section we combine the main ingredients developed throughout the paper to establish the rigidity of fixed points of the operator
\[
K\mapsto \Gamma_p\Pi_p^\ast K .
\]
The strategy follows the variational framework introduced in the previous sections, the admissible statement. First, we derive a first-variation formula for the volume of \emph{the polar $L_p$-projection body} along linear reflection shadow systems and show that the fixed-point condition forces this variation to vanish. We then combine this vanishing property with the convexity of the \emph{$L_p$-Projection Rolodex} functionals established in Section~\ref{Sec: ConvexproviaFibInequ}. The resulting rigidity implies that the volume of the \emph{polar $L_p$-projection body} remains constant under Steiner symmetrization. Finally, invoking the equality characterization for the $L_p$-projection inequality, we conclude that ellipsoids are the unique fixed points of $\Gamma_p\Pi_p^\ast$ up to dilation.

\subsection{Variational Formula for $\vol_n(\Pi_p^\ast K_t)$ in terms of  Shadow System}

The first step is to understand the behavior of the functional
\[
\vol_n(\Pi_p^\ast K_t)
\]
along the linear reflection shadow system $\{K_t\}_{t\in[-1,1]}$.
The admissibility results established in Section~\ref{Sec:Admsupppert} allow us to differentiate the support function with respect to the shadow parameter. Our goal is to show that if $K$ satisfies the fixed-point equation
\[
\Gamma_p\Pi_p^\ast K=cK,
\]
then the first variation of $\vol_n(\Pi_p^\ast K_t)$ at $t=1^-$ vanishes. This cancellation is the key variational input in the proof of rigidity and serves as the bridge between the fixed-point condition and the convexity arguments developed later.

 The following lemma provides the boundedness estimates required for the subsequent differentiation arguments.
\begin{lem}\label{Lvariationbounded}
Let $K\in \mathcal{K}_o^n$ be of class $C_+^2$, $p>1$ and let \(\{K_t\},\ t\in [-1,1]\)  be the linear reflection shadow systems. Then the following variation exists and is bounded:
\begin{equation}\label{ddtvariationbounded}
\left.\frac{d}{dt}\int_{S^{n-1}} |\langle \theta, v \rangle|^p h_{K_t}^{1-p}(\theta) \, dS(K_t,\theta)\right|_{t=1^-}.
\end{equation}
\end{lem}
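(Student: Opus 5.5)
I would work entirely in the graph representation of Lemma~\ref{PeysupKp}, which turns the integral against $S(K_t,\cdot)$ into integrals over the fixed domain $K_u'$. This sidesteps any differentiation of surface area measures. For $x\in\partial^\pm K_t$ the outer normal is $(\mp\nabla f_t,\pm1)/\sqrt{1+\|\nabla f_t\|^2}$ (respectively with $g_t$). Hence, with the weight $|\langle\theta,v\rangle|^p$ for a general $v=(v',v_n)$, where $v'\in u^\perp$, we get
\begin{align*}
\Psi(t)&:=\int_{\mathbb{S}^{n-1}}|\langle\theta,v\rangle|^p h_{K_t}(\theta)^{1-p}\,dS(K_t,\theta)\\
&=\int_{K_u'}\bigl|v_n-\langle v',\nabla f_t(x')\rangle\bigr|^p\,\langle f_t\rangle(x')^{1-p}\,dx'
+\int_{K_u'}\bigl|\langle v',\nabla g_t(x')\rangle-v_n\bigr|^p\,\langle -g_t\rangle(x')^{1-p}\,dx'.
\end{align*}
This is the same computation as in Lemma~\ref{PeysupKp}, with the special vector $J(y+su)$ replaced by $v$. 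Along the shadow system we use $f_t=F\cdot t+G$ and $g_t=F\cdot t-G$ with $F=(f+g)/2$ and $G=(f-g)/2$. Then $\nabla f_t$, $\nabla g_t$, $\langle f_t\rangle=t\langle F\rangle+\langle G\rangle$ and $\langle -g_t\rangle=-t\langle F\rangle+\langle G\rangle$ are all affine in $t$. Consequently each integrand has the form $\Phi(a+tb,\,c+td)$ with $\Phi(\xi,\eta)=|\xi|^p\eta^{1-p}$, where $a,b,c,d$ are functions of $x'$ independent of $t$.

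Next I would differentiate under the integral sign. Since $p>1$, $\Phi$ is $C^1$ on $\mathbb{R}\times(0,\infty)$, with $\partial_\xi\Phi=p|\xi|^{p-2}\xi\,\eta^{1-p}$ and $\partial_\eta\Phi=(1-p)|\xi|^p\eta^{-p}$. For each $x'\in\operatorname{int}K_u'$ the one-sided derivative at $t=1^-$ therefore exists pointwise and equals $b\,\partial_\xi\Phi+d\,\partial_\eta\Phi$, evaluated at $(a+b,c+d)$. To exchange limit and integral I would use the mean value theorem together with dominated convergence. This needs a $t$-uniform integrable majorant for $|b\,\partial_\xi\Phi|+|d\,\partial_\eta\Phi|$ on $[1/2,1]$, say.

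The main obstacle is this majorant near $\partial K_u'$. There $\nabla f$ and $\nabla g$ blow up, and $\langle G\rangle=\tfrac12(\langle f\rangle+\langle -g\rangle)$ is controlled only through the support function. I would first prove the lower bound $\langle f_t\rangle\ge c_0\sqrt{1+\|\nabla f_t\|^2}$ and its analogue for $g_t$. This follows because $\langle f_t\rangle/\sqrt{1+\|\nabla f_t\|^2}=h_{K_t}(\nu)$, and $K_t\supset rB_2^n$ uniformly for $t$ near $1$ by Hausdorff continuity (Lemma~\ref{lemlinearshadow}). With this bound, $\eta^{-p}|\xi|^p\lesssim(1+\|\nabla f_t\|^2)^{-p/2}(1+\|\nabla f_t\|)^p$, which is bounded. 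The $\partial_\eta\Phi$ term is then dominated by a constant times $|d|$. Here $d=\pm\langle F\rangle$, and $|\langle F\rangle|\le C(1+\|\nabla f\|+\|\nabla g\|)$. The $\partial_\xi\Phi$ term is likewise dominated by $C(1+\|\nabla f_t\|)^{p-1}(1+\|\nabla f_t\|)^{1-p}|b|\lesssim|b|\le C(\|\nabla f\|+\|\nabla g\|)$.

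It therefore remains to show $\|\nabla f\|,\|\nabla g\|\in L^1(K_u')$. This holds because $\int_{K_u'}\sqrt{1+\|\nabla f\|^2}\,dx'\le\mathcal{H}^{n-1}(\partial K)<\infty$. I must still check that $\nabla f_t$ is comparable to $\nabla f$ and $\nabla g$ uniformly in $t\in[1/2,1]$; this is immediate from linearity. Dominated convergence then gives existence of the derivative in (\ref{ddtvariationbounded}), bounded by $C\,S(K,\mathbb{S}^{n-1})$ uniformly in $v\in\mathbb{S}^{n-1}$, which is the required boundedness.
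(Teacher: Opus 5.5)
Your proposal is correct and follows the same route as the paper: write the integral over the graphs of $f_t,g_t$ on $K_u'$, differentiate the integrand pointwise, dominate by $C(1+\|\nabla f\|+\|\nabla g\|)$, which is integrable because $\int_{K_u'}\sqrt{1+\|\nabla f\|^2}\,dx'\le\mathcal{H}^{n-1}(\partial K)$, and conclude by dominated convergence. Your mean-value-theorem majorant, made uniform in $t$ and $v$ via $\langle f_t\rangle\ge r\sqrt{1+\|\nabla f_t\|^2}$, is in fact more careful than the paper's step: the paper only bounds the limiting integrand, and it asserts that $\langle f\rangle,\langle g\rangle$ are bounded by $\max h_K$, which holds only after dividing by $\sqrt{1+\|\nabla f\|^2}$ (respectively $\sqrt{1+\|\nabla g\|^2}$).
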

\begin{proof}
For any  $v\in\mathbb{S}^{n-1},$ spherical integral for $K_t$ 
\[
\int_{S^{n-1}} |\langle \theta, v \rangle|^p h_{K_t}^{1-p}(\theta) \, dS(K_t,\theta)
\]
can be decomposed into two parts by its upper and low graphs:
\[
\int_{S^{n-1}} |\langle \theta, v \rangle|^p h_{K_t}^{1-p}(\theta) \, dS(K_t,\theta)
= E_1 + E_2,
\]
where
\[
E_1 = \int_{K'_u} \big|\big\langle v, (-\nabla f_t(x'),1) \big\rangle\big|^p h_{K_t}^{1-p}(-\nabla f_t(x'),1) \, dx',
\]
\[
E_2 = \int_{K'_u} \big|\big\langle v, (\nabla g_t(x'),-1) \big\rangle\big|^p h_{K_t}^{1-p}(\nabla g_t(x'),-1) \, dx'.
\]
Similarly, for $K$ itself we have
\[
\int_{S^{n-1}} |\langle \theta, v \rangle|^p h_{K}^{1-p}(\theta) \, dS_{K}(\theta)
= E_3 + E_4,
\]
where
\[
E_3 = \int_{K'_u} \big|\big\langle v, (-\nabla f(x'),1) \big\rangle\big|^p h_{K}^{1-p}(-\nabla f(x'),1) \, dx',
\]
\[
E_4 = \int_{K'_u} \big|\big\langle v, (\nabla g(x'),-1) \big\rangle\big|^p h_{K}^{1-p}(\nabla g(x'),-1) \, dx'.
\]

 The limit of the difference quotient of the integrands on $E_1$ and $E_3$ satisfies
\begin{eqnarray}
&&\lim_{t\to 1^-} \frac{\big|\big\langle v, (-\nabla f_t(x'),1) \big\rangle\big|^p h_{K_t}^{1-p}(-\nabla f_t(x'),1)
- \big|\big\langle v, (-\nabla f(x'),1) \big\rangle\big|^p h_{K}^{1-p}(-\nabla f(x'),1)}{t-1}
\nonumber\\
&=&-p \big|\big\langle v, (-\nabla f(x'),1) \big\rangle\big|^{p-1} \sgn\langle v, (-\nabla f(x'),1) \rangle
\cdot \frac{\nabla f(x')+\nabla g(x')}{2} \cdot \tilde{v}
\cdot h_{K}^{1-p}(-\nabla f(x'),1)\nonumber\\
&&+ \big|\big\langle v, (-\nabla f(x'),1) \big\rangle\big|^p (1-p) h_{K}^{-p}(-\nabla f(x'),1)
\cdot \frac{\langle f\rangle (x') + \langle g \rangle(x')}{2},\nonumber\\
&=&-p \big|\big\langle v, \frac{(-\nabla f(x'),1)}{\sqrt{1+\|\nabla f(x')\|^2}} \big\rangle\big|^{p-1} \sgn\langle v, (-\nabla f(x'),1) \rangle
\cdot \frac{\nabla f(x')+\nabla g(x')}{2} \cdot \tilde{v}
\cdot h_{K}^{1-p}(\frac{(-\nabla f(x'),1)}{\sqrt{1+\|\nabla f(x')\|^2}})\nonumber\\
&&+ \big|\big\langle v, \frac{(-\nabla f(x'),1)}{\sqrt{1+\|\nabla f(x')\|^2}} \big\rangle\big|^p (1-p) h_{K}^{-p}(\frac{(-\nabla f(x'),1)}{\sqrt{1+\|\nabla f(x')\|^2}})
\cdot \frac{\langle f \rangle(x') + \langle g \rangle (x')}{2}\nonumber
\\
&\leq& C_1(\|\nabla f\|+\|\nabla g\|)+C_2\nonumber
\end{eqnarray}
where constants $0<C_1, C_2<\infty$, \(\tilde{v}\) denotes the first \(n-1\) components of \(v\) and 
\[
\Bigg|p \big|\big\langle v, \frac{(-\nabla f(x'),1)}{\sqrt{1+\|\nabla f(x')\|^2}} \big\rangle\big|^{p-1} \sgn\langle v, (-\nabla f(x'),1) \rangle
\cdot h_{K}^{1-p}(\frac{(-\nabla f(x'),1)}{\sqrt{1+\|\nabla f(x')\|^2}})\Bigg|\leq C_1
\]
and 
\(\langle f \rangle, \langle g \rangle\)
are bounded above by $\max_{u\in\mathbb{S}^{n-1}}h_K(u)<\infty.$
 The terms in the limit are all bounded as the integrals satisfy
\begin{eqnarray*}
\int_{K'_u} (\|\nabla f(x')\| +\|\nabla g(x')\|) \, dx'
&\leq& \int_{K'_u} \left(\sqrt{1+\|\nabla f(x')\|^2}+\sqrt{1+\|\nabla g(x')\|^2}\right) \, dx'\\
&=& \mathcal{H}^{n-1}(\partial^+ K)+ \mathcal{H}^{n-1}(\partial^- K)\\
&=&\mathcal{H}^{n-1}(\partial K) < \infty.
\end{eqnarray*}

Therefore, by   the dominated convergence theorem,
$\lim_{t\rightarrow 1^-}\frac{E_1-E_3}{t-1}$ is bounded. Similarly,
$\lim_{t\rightarrow 1^-}\frac{E_2-E_4}{t-1}$ is also bounded. Thus, the variation in (\ref{ddtvariationbounded}) exists and is bounded. 
\end{proof}
The boundedness obtained above guarantees that the first variation can be handled by standard convergence arguments. In particular, it provides the technical foundation for deriving the endpoint variational identities used below.

\begin{lem}[Integral Convergence Lemma]\label{weakconvergence}
If \(f,f_1,f_2,\ldots\) are continuous functions on \(\Omega\subset \mathbb{S}^{n-1}\), and
\(\mu,\mu_1,\mu_2,\ldots\) are finite Borel measures on
\(\mathbb S^{n-1}\) such that
\[
\lim_{j\to\infty} f_j = f
\qquad\text{uniformly on }\Omega,
\]
and
\[
\lim_{j\to\infty} \mu_j = \mu
\qquad\text{weakly on }\mathbb S^{n-1},
\]
then
\[
\lim_{j\to\infty}
\int_{\Omega} f_j(u)\, d\mu_j(u)
=
\int_{\Omega} f(u)\, d\mu(u).
\]
\end{lem}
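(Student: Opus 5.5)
The plan is to split the difference $\int_\Omega f_j\,d\mu_j-\int_\Omega f\,d\mu$ into a term controlled by the uniform convergence of the integrands and a term controlled by the weak convergence of the measures. Concretely, I would write
\[
\int_{\Omega} f_j\,d\mu_j-\int_{\Omega} f\,d\mu
=\int_{\Omega}(f_j-f)\,d\mu_j+\Bigl(\int_{\Omega} f\,d\mu_j-\int_{\Omega} f\,d\mu\Bigr).
\]

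For the first term, the bound is
\[
\Bigl|\int_{\Omega}(f_j-f)\,d\mu_j\Bigr|\le \sup_{\Omega}|f_j-f|\;\mu_j(\mathbb S^{n-1}).
\]
Testing the weak convergence $\mu_j\to\mu$ against the constant function $1$ gives $\mu_j(\mathbb S^{n-1})\to\mu(\mathbb S^{n-1})<\infty$, so the total masses are uniformly bounded. Since $\sup_\Omega|f_j-f|\to0$, this first term tends to zero.

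The second term is where the only real issue lies. Weak convergence gives $\int g\,d\mu_j\to\int g\,d\mu$ for continuous $g$ on the whole sphere, not for integrals over a subset $\Omega$.

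\begin{itemize}
\item \textbf{Case $\Omega=\mathbb S^{n-1}$.} This is the case relevant to the intended application, since the variational integrals in this section are over the whole sphere. Here $f$ is continuous on the compact sphere and the second term vanishes directly by the definition of weak convergence.
\item \textbf{Case $\Omega$ a proper subset.} I would assume $\Omega$ is closed, so that $f$ extends by Tietze to a continuous $\tilde f$ on $\mathbb S^{n-1}$. I would also need $\mu(\partial\Omega)=0$, or else that the $\mu_j$ and $\mu$ are concentrated on $\Omega$. Under this continuity-set condition, the portmanteau theorem gives $\int 1_\Omega\tilde f\,d\mu_j\to\int 1_\Omega\tilde f\,d\mu$. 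A direct way to see this is to approximate $1_\Omega$ from above and below by continuous cutoffs, with the error bounded by $\|\tilde f\|_\infty\,\mu(\text{a neighborhood of }\partial\Omega)$, which can be made small.
\end{itemize}

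I expect the main obstacle to be exactly this domain issue. Weak convergence alone does not control integrals over subsets whose boundary carries mass; for example, point masses approaching $\partial\Omega$ from outside break the statement. So in writing the proof I would state explicitly that the lemma is applied with $\Omega=\mathbb S^{n-1}$, or else add the hypothesis $\mu(\partial\Omega)=0$, and then finish with the two estimates above.
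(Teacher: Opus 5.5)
The paper gives no proof of this lemma. It is stated as a standard fact and used only once, in the proof of Lemma~\ref{ThmGPK} to obtain \eqref{limSn-1}. There $\Omega=\mathbb{S}^{n-1}$, $f_t(\theta)=|\langle v,\theta\rangle|^p\,\frac{h_{K_t}(\theta)^{1-p}-h_K(\theta)^{1-p}}{t-1}$, $f=(1-p)|\langle v,\theta\rangle|^p h_K^{-p}\phi$ and $\mu_t=S(K_t,\cdot)$.

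Your argument for that case is the standard proof and is complete. You split off $\int(f_j-f)\,d\mu_j$ and bound it by $\sup|f_j-f|\,\mu_j(\mathbb{S}^{n-1})$, with the masses bounded because $\mu_j(\mathbb{S}^{n-1})\to\mu(\mathbb{S}^{n-1})$. You then get $\int f\,d\mu_j\to\int f\,d\mu$ directly from the definition of weak convergence. The continuous parameter $t\to1^-$ in the application is covered by applying your statement along every sequence $t_j\to1^-$.

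Your objection to the general statement is also correct. Take $\Omega$ a closed hemisphere, $f_j=f\equiv 1$, and $\mu_j=\delta_{x_j}$ with $x_j\notin\Omega$ and $x_j\to x\in\partial\Omega$. Then the left side is $0$ and the right side is $1$. So the lemma as literally written for arbitrary $\Omega\subset\mathbb{S}^{n-1}$ is false, and an extra hypothesis is genuinely needed, such as $\Omega=\mathbb{S}^{n-1}$, or $\Omega$ closed with $\mu(\partial\Omega)=0$. This does not affect the paper, since only the whole-sphere case is used.

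One small repair is needed in your proper-subset case. The cutoff error for the $\mu_j$-integrals is $\|\tilde f\|_\infty\,\mu_j(U)$ for a closed neighborhood $U$ of $\partial\Omega$, not $\|\tilde f\|_\infty\,\mu(U)$. You therefore need the portmanteau bound $\limsup_j\mu_j(U)\le\mu(U)$ before shrinking $U$ and using $\mu(\partial\Omega)=0$.
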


\begin{lem}\cite{LutwakYangZhang2000}\label{lem:Pi_p_positive}
Let $K\in \mathcal K_o^n$ and let $p>1$.
Then the support function of the $L_p$-projection body satisfies
\[
h_{\Pi_pK}(u)>0,
\qquad
u\in\mathbb S^{n-1}.
\]
Hence $\Pi_pK$ contains a Euclidean ball centered at the origin and therefore
\[
\Pi_pK\in\mathcal K_o^n.
\]
\end{lem}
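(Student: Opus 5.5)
The plan is a compactness argument that bounds the $p$-th power of the support function of $\Pi_pK$ uniformly from below, in the same spirit as the lower bound $m_E>0$ in the proof of Lemma~\ref{LLEpKisoriginsymmetricconvexbody}. Put
\[
\Psi(u):=\int_{\mathbb S^{n-1}}|\langle u,v\rangle|^p h_K(v)^{1-p}\,dS(K,v),\qquad u\in\mathbb S^{n-1},
\]
so that $h_{\Pi_pK}(u)=\bigl(\Psi(u)/(n\omega_nc_{n-2,p})\bigr)^{1/p}$ by (\ref{Lp-projection-alt}).

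\textbf{Step 1: continuity of $\Psi$.} Since $K\in\mathcal K_o^n$, there is $r>0$ with $rB_2^n\subset K$, so $h_K\ge r$ and $h_K^{1-p}\le r^{1-p}$ (because $p>1$). For $u\in\mathbb S^{n-1}$ the integrand is bounded by $r^{1-p}$, and $S(K,\cdot)$ is a finite measure. Hence dominated convergence, exactly as in Lemma~\ref{Lcontinu}, shows that $\Psi$ is continuous on the compact sphere. It therefore attains a minimum $m$ at some $u_0\in\mathbb S^{n-1}$.

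\textbf{Step 2: $m>0$.} Suppose $m=0$. Since $h_K>0$, the integrand $|\langle u_0,v\rangle|^p h_K(v)^{1-p}$ is nonnegative and vanishes only on the great sphere $u_0^\perp\cap\mathbb S^{n-1}$. So $S(K,\cdot)$ would be concentrated there.

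This is impossible for a full-dimensional body. One way to see it uses $\int_{\mathbb S^{n-1}}|\langle u_0,v\rangle|\,dS(K,v)=2\vol_{n-1}(P_{u_0^\perp}K)>0$, since $K$ has nonempty interior. Alternatively, $S(K,\cdot)$ is not concentrated on any great subsphere. This is the only step with real content, and it mirrors the argument already given for $m_E$.

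\textbf{Step 3: conclusion.} From Steps 1 and 2, $h_{\Pi_pK}(u)\ge \bigl(m/(n\omega_nc_{n-2,p})\bigr)^{1/p}=:\delta>0$ for every $u$. Since $\Pi_pK$ is the convex body whose support function is given by (\ref{Lp-projection-alt}), a sublinear (Minkowski) function of $u$, we get $\delta B_2^n\subset\Pi_pK$. Indeed, $h_{\Pi_pK}\ge\delta=h_{\delta B_2^n}$, and support-function comparison is equivalent to inclusion. Boundedness also follows from Step 1, since $\Psi\le r^{1-p}S(K,\mathbb S^{n-1})$. Hence $\Pi_pK\in\mathcal K_o^n$.

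The main obstacle is the non-degeneracy in Step 2. I would handle it via the projection formula for $\int|\langle u,v\rangle|\,dS$ rather than a general support-of-measure fact.
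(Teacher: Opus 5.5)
Your argument is correct: continuity and compactness give a minimum of $\Psi$, Cauchy's formula $\int_{\mathbb S^{n-1}}|\langle u_0,v\rangle|\,dS(K,v)=2\vol_{n-1}(P_{u_0^\perp}K)>0$ rules out $m=0$, and the bound $h_{\Pi_pK}\ge\delta$ then yields $\delta B_2^n\subset\Pi_pK$. The paper gives no proof of this lemma (it only cites Lutwak--Yang--Zhang); your reasoning is the same compactness and non-concentration argument the paper uses for $m_E>0$ in Lemma~\ref{LLEpKisoriginsymmetricconvexbody}, except that you invoke Cauchy's projection formula where the paper uses the fact that $S(K,\cdot)$ is not concentrated on a great subsphere.
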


\begin{lem}\label{ThmGPK}
Let $K\in \mathcal{K}_o^n$ be of class $C_+^2$, $p>1$ and let \(\{K_t\}\)  be the linear reflection shadow systems. Then
\[
\lim_{t\to1^-}\frac{\vol_n(\Pi^{\ast}_p(K_t))-\vol_n(\Pi^{\ast}_pK)}{t-1}
=0.
\]
\end{lem}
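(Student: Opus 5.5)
The plan is to differentiate the volume formula \eqref{volume-polar} along the shadow system and reduce the derivative to the first variation of a single $L_p$ mixed-volume-type functional of $K_t$ against a fixed body. Write
\[
\Phi_t(v):=\int_{\mathbb S^{n-1}}|\langle\theta,v\rangle|^p h_{K_t}(\theta)^{1-p}\,dS(K_t,\theta),
\qquad h_{\Pi_pK_t}(v)^p=\frac{\Phi_t(v)}{n\omega_nc_{n-2,p}} .
\]
Then
\[
\vol_n(\Pi_p^\ast K_t)=\frac1n (n\omega_nc_{n-2,p})^{n/p}\int_{\mathbb S^{n-1}}\Phi_t(v)^{-n/p}\,dv .
\]

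\textbf{Step 1: differentiate under the outer integral.} By Lemma~\ref{Lvariationbounded}, the derivative $\Phi'(v):=\frac{d}{dt}\Phi_t(v)|_{t=1^-}$ exists for each $v$. The bound there, $C_1(\|\nabla f\|+\|\nabla g\|)+C_2$, is integrable on $K_u'$ and uniform in $v$. By Lemma~\ref{lem:Pi_p_positive}, $\Phi_t$ is bounded below uniformly for $t$ near $1$, using Hausdorff continuity of $t\mapsto K_t$ from Lemma~\ref{lemlinearshadow} together with Lemma~\ref{weakconvergence}. Dominated convergence then gives
\[
\frac{d}{dt}\vol_n(\Pi_p^\ast K_t)\Big|_{t=1^-}=-\frac1p(n\omega_nc_{n-2,p})^{n/p}\int_{\mathbb S^{n-1}}\Phi_1(v)^{-\frac{n}{p}-1}\,\Phi'(v)\,dv .
\]

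\textbf{Step 2: swap the integrals.} Compute $\Phi'(v)$ through the graph representation of Lemma~\ref{PeysupKp}, as in the proof of Lemma~\ref{Lvariationbounded}. Then apply Fubini to exchange the $v$-integral with the $x'$-integral over $K_u'$. The resulting $v$-integrand contains $\Phi_1(v)^{-n/p-1}|\langle\theta,v\rangle|^{p}$ and $\Phi_1(v)^{-n/p-1}|\langle\theta,v\rangle|^{p-2}\langle\theta,v\rangle\, v$. Up to constants, integrating these in $v$ produces $h_{Q}(\theta)^p$ and $\tfrac1p\nabla(h_Q^p)(\theta)$, where $Q:=\Gamma_p\Pi_p^\ast K$; this follows from \eqref{Lp-centroid-alt} written in polar coordinates for $\Pi_p^\ast K$. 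The derivative therefore becomes a constant multiple of $\frac{d}{dt}\int h_Q^p\,h_{K_t}^{1-p}\,dS(K_t,\cdot)\big|_{t=1^-}$, with $Q$ frozen. This splits into two pieces. The first is the variation of the weight $h_{K_t}^{1-p}$, which by Proposition~\ref{pro:uniform-support-variation-shadow} equals $(1-p)\int h_Q^p h_K^{-p}\phi\,dS(K,\cdot)$ with $\phi(\theta)=a(x_K(\theta))\langle u,\theta\rangle$. The second is the variation of the measure $S(K_t,\cdot)$ against the fixed function $h_Q^ph_K^{1-p}$.

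\textbf{Step 3: show the two pieces cancel.} This is the main obstacle. I would rewrite both pieces as boundary integrals over $\partial K$, using the representation $K_t=\{x+(t-1)a(x)u:x\in K\}$ from Proposition~\ref{pro:uniform-support-variation-shadow}. The measure variation should then be an integral of the normal velocity $a(x)\langle u,\nu^K(x)\rangle$ against the first variation (mean-curvature-type) of the integrand. I would first try to integrate by parts on $K_u'$ in the $f$- and $g$-graph terms. The aim is to use that $a=(f+g)/2$ enters the upper and lower graphs with the same sign, while the normals $(-\nabla f,1)$ and $(\nabla g,-1)$ enter with opposite signs. Combined with the origin symmetry of $Q$ (so $h_Q(\theta)=h_Q(-\theta)$) and the volume preservation $\frac{d}{dt}\vol_n(K_t)=0$ from Lemma~\ref{lemlinearshadow}, this should force the boundary contributions to cancel pairwise. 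I would check the cancellation first in the model case where $K$ is an ellipsoid, where everything is explicit, and then try to identify which structural property of the shadow system drives it in general. Once the total vanishes, the limit in the statement is $0$.
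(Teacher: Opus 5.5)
The lemma as printed omits the hypothesis that the paper's proof actually uses: the fixed-point condition $\Gamma_p\Pi_p^\ast K=cK$. The lemma is only ever applied under this condition. Without it the claim is false. If the left derivative vanished for every origin-symmetric $C^2_+$ body and every $u$, then Lemmas \ref{lem:all-t} and \ref{lem:equality} would make every such body an ellipsoid.

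This is exactly where your Step 3 fails. Your Steps 1--2 match the paper's reduction. Up to a negative constant, the derivative is $\frac{d}{dt}\int h_Q^p h_{K_t}^{1-p}\,dS(K_t,\cdot)\big|_{t=1^-}$ with $Q=\Gamma_p\Pi_p^\ast K$ frozen. It splits into the weight piece $(1-p)\int h_Q^ph_K^{-p}\phi\,dS(K,\cdot)$ and the measure piece. The paper reaches this more cheaply: it applies Fubini to the difference quotients at each fixed $t$, so $S(K_t,\cdot)$ is paired directly with $h_Q^ph_K^{1-p}$ and no gradient of $h_Q^p$ is needed.

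The sign structure you want to exploit gives
\[
\int\phi\,\psi\,dS(K,\cdot)=\int_{K_u'}a(x')\big(\psi(\nu^+(x'))-\psi(\nu^-(x'))\big)\,dx',
\]
where $\nu^\pm(x')$ are the outer normals at $x'+f(x')u$ and $x'+g(x')u$. This vanishes only when $\psi(\nu^+(x'))=\psi(\nu^-(x'))$, for instance when $\psi$ is constant. For a general $Q$, the density $h_Q^ph_K^{-p}$ is not of this kind. Evenness of $h_Q$ does not help, since $\nu^-(x')\neq-\nu^+(x')$ in general. Nothing in the shadow system produces the cancellation you hope for. Your ellipsoid test case would cancel only because ellipsoids are fixed points.

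The missing step is to substitute $h_Q=c\,h_K$.
\begin{itemize}
\item For the weight piece, $h_Q^ph_K^{-p}=c^p$, so this piece is $(1-p)c^p\int\phi\,dS(K,\cdot)$.
\item For the measure piece, $h_Q^ph_K^{1-p}=c^p h_K$. Since $\vol_n(K_t)=\vol_n(K)$, you have the exact identity $\int h_K\,dS(K_t,\cdot)-\int h_K\,dS(K,\cdot)=-\int(h_{K_t}-h_K)\,dS(K_t,\cdot)$. By Proposition \ref{pro:uniform-support-variation-shadow} and Lemma \ref{weakconvergence}, the measure piece is therefore $-c^p\int\phi\,dS(K,\cdot)$.
\end{itemize}
Both pieces vanish because $\int\phi\,dS(K,\cdot)=0$. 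This is the display above with $\psi\equiv 1$, or equivalently the volume variation formula with $\vol_n(K_t)$ constant. So the whole content of the lemma is that the fixed-point condition turns both variation terms into multiples of the first variation of volume. No integration by parts or pairwise cancellation of boundary terms is needed.
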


\begin{proof}
By volume formula (\ref{volume-radial}),
\[
\vol_n(\Pi^{\ast}_p(K_t))=\frac{1}{n}\int_{S^{n-1}}h_{\Pi_pK_t}^{-n}(v)\,dv.
\]
Thus
\begin{equation}\label{limt1PipKt}
\lim_{t\to1^-}\frac{\vol_n(\Pi^{\ast}_p(K_t))-\vol_n(\Pi^{\ast}_pK)}{t-1}
=\lim_{t\to1^-}\frac{1}{n}\int_{S^{n-1}}\frac{h_{\Pi_pK_t}^{-n}(v)-h_{\Pi_pK}^{-n}(v)}{t-1}\,dv.
\end{equation}

We first verify that the integrand is dominated by an integrable function, justifying the interchange of limit and integral.

By the chain rule,
\[
\lim_{t\to1^-}\frac{h_{\Pi_pK_t}^{-n}(v)-h_{\Pi_pK}^{-n}(v)}{t-1}
=-n\,h_{\Pi_pK}^{-n-1}(v)\left.\frac{d}{dt}h_{\Pi_pK_t}(v)\right|_{t=1^-}.
\]
Recall that \[h_{\Pi_pK_t}(v)=\left(\frac{1}{n\omega_nc_{n-2,p}}F_p(K_t,v)\right)^{1/p},\] where
\[
F_p(K_t,v)=\int_{S^{n-1}}|\langle v,\theta\rangle|^ph_{K_t}^{1-p}(\theta)\,dS(K_t,\theta).
\]
It has been shown that \(\left.\frac{d}{dt}F_p(K_t,v)\right|_{t=1^-}\) is uniformly bounded for all $v\in\mathbb{S}^{n-1}$ by Lemma \ref{Lvariationbounded}. Since $h_{\Pi_pK}$ is bounded, the integrand of the right integral in (\ref{limt1PipKt}) is dominated by an integrable function. By  the dominated convergence theorem, the limit in (\ref{limt1PipKt}) becomes
\begin{equation}\label{ByDCT}
\frac{1}{n}\int_{S^{n-1}}\left.\frac{d}{dt}h_{\Pi_pK_t}^{-n}(v)\right|_{t=1^-}dv
=-\frac{1}{pn\omega_nc_{n-2,p}}\int_{S^{n-1}}h_{\Pi_pK}^{-n-p}(v)\left.\frac{d}{dt}F_p(K_t,v)\right|_{t=1^-}dv.
\end{equation}

As \(h_K(\theta) > 0\) on \(\mathbb{S}^{n-1}\), the function \(s \mapsto s^{1-p}\) is \(C^1\) on the range of \(h_K\). Together with the chain rule and the uniform convergence of the difference quotient of \(h_{K_t}\) (see Proposition \ref{pro:uniform-support-variation-shadow}), we have
\begin{equation}\label{uniformconvergence}
\sup_{\theta\in\mathbb{S}^{n-1}} \left| \frac{h_{K_t}(\theta)^{1-p} - h_K(\theta)^{1-p}}{t-1} - (1-p)h^{-p}_K(\theta)\phi(\theta) \right| \to 0 \quad \text{as } t \to 1^-.
\end{equation}
Based on the fact that $\{K_t\}$ is an admissible support perturbation in Proposition \ref{pro:uniform-support-variation-shadow}, $K_t$ converges to $K$ in Hausdorff metric as $t\rightarrow 1^-$. Therefore, surface area measures $dS(K_t,\cdot)$ converges weakly to the surface area measure $dS(K,\cdot)$. Combining with the uniform convergence (\ref{uniformconvergence}) and Lemma \ref{weakconvergence}, we obtain
\begin{eqnarray}\label{limSn-1}
&&\lim_{t\to1^-}\frac{1}{t-1}\left(
\int_{S^{n-1}}|\langle v,\theta\rangle|^ph_{K_t}^{1-p}(\theta)\,dS(K_t,\theta)
-\int_{S^{n-1}}|\langle v,\theta\rangle|^ph_{K}^{1-p}(\theta)\,dS(K_t,\theta)
\right)\\
&=&(1-p)\int_{S^{n-1}}|\langle v,\theta\rangle|^ph^{-p}_K(\theta)\phi(\theta)\,dS(K,\theta).\nonumber
\end{eqnarray}

By Lemma \ref{Lvariationbounded}, $\left.\frac{d}{dt}F_p(K_t,v)\right|_{t=1^-}$ exists and is bounded. It follows from (\ref{limSn-1}) that 
\begin{eqnarray}
&&\left.\frac{d}{dt}F_p(K_t,v)\right|_{t=1^-}\nonumber\\
&=&\lim_{t\to1^-}\frac{1}{t-1}\left(
\int_{S^{n-1}}|\langle v,\theta\rangle|^ph_{K_t}^{1-p}(\theta)\,dS(K_t,\theta)
-\int_{S^{n-1}}|\langle v,\theta\rangle|^ph_{K}^{1-p}(\theta)\,dS(K,\theta)
\right)\nonumber\\
&=&\lim_{t\to1^-}\frac{1}{t-1}\left(
\int_{S^{n-1}}|\langle v,\theta\rangle|^ph_{K_t}^{1-p}(\theta)\,dS(K_t,\theta)
-\int_{S^{n-1}}|\langle v,\theta\rangle|^ph_{K}^{1-p}(\theta)\,dS(K_t,\theta)
\right)\nonumber\\
&&+\lim_{t\to1^-}\frac{1}{t-1}\left(
\int_{S^{n-1}}|\langle v,\theta\rangle|^ph_{K}^{1-p}(\theta)\,dS(K_t,\theta)
-\int_{S^{n-1}}|\langle v,\theta\rangle|^ph_{K}^{1-p}(\theta)\,dS(K,\theta)
\right)\nonumber\\
&=&(1-p)\int_{S^{n-1}}|\langle v,\theta\rangle|^ph_{K}^{-p}(\theta)\phi(\theta)\,dS(K,\theta)\nonumber\\
&&+\lim_{t\to1^-}\frac{\int_{S^{n-1}}|\langle v,\theta\rangle|^ph_{K}^{1-p}(\theta)\,dS(K_t,\theta)-\int_{S^{n-1}}|\langle v,\theta\rangle|^ph_{K}^{1-p}(\theta)\,dS(K,\theta)}{t-1}.\label{limbounded}
\end{eqnarray}
Moreover, by the above equations, the limit in (\ref{limbounded}) exists and is bounded. Thus, when the difference quotient in (\ref{limbounded}) serves as the integrand, we may interchange the order of integration and take the limit; that is, the limit can be moved outside the outer integral sign.

Using polar coordinates, we have
\begin{equation}\label{GammapPipast}
\int_{\mathbb{S}^{n-1}} |\langle v,\theta \rangle|^p \cdot \rho_{\Pi_p^* K}^{n+p}(v) \, d\sigma(v)=(n+p)c_{n,p}\operatorname{Vol}_n(\Pi^{\ast}_pK)h^p_{\Gamma_p(\Pi_p^{\ast}K)}(\theta)=c(n,p,K)h^p_{\Gamma_p(\Pi_p^{\ast}K)}(\theta),
\end{equation}
where $c(n,p,K)$ is a constant depending on $n,p,K$.

Substituting back into the expression (\ref{ByDCT}) for the volume variation, by $\Gamma_p\Pi_p^{\ast}K=cK$,   (\ref{GammapPipast}) and  the existence and boundedness of the limit in (\ref{limbounded}), 
we obtain
\begin{eqnarray}\label{limVolnVoln}
&&n\omega_nc_{n-2,p}\lim_{t\to1^-}\frac{\vol_n(\Pi^{\ast}_p(K_t))-\vol_n(\Pi^{\ast}_pK)}{t-1}\\
&=&-\frac{1-p}{p}\int_{S^{n-1}}h_{\Pi_pK}^{-n-p}(v)
\left(\int_{S^{n-1}}|\langle v,\theta\rangle|^ph_{K}^{-p}(\theta)\phi(\theta)\,dS(K,\theta)
\right)dv\nonumber\\
&&+\lim_{t\to1^-}-\frac{1}{p(t-1)}\left(
\int_{S^{n-1}}\int_{S^{n-1}}h_{\Pi_pK}^{-n-p}(v)|\langle v,\theta\rangle|^p\,dv\,h_{K}^{1-p}(\theta)dS(K_t,\theta)\right.\nonumber\\
&&\quad\quad\quad\quad\quad\quad\quad\quad-\left.\int_{S^{n-1}}\int_{S^{n-1}}h_{\Pi_pK}^{-n-p}(v)|\langle v,\theta\rangle|^p\,dv\,h_{K}^{1-p}(\theta)dS(K,\theta)
\right)\nonumber\\
&=&-\frac{c(n,p,K)(1-p)}{p}\int_{S^{n-1}}h^p_{\Gamma_p\Pi_p^{\ast}K}(\theta)h_{K}^{-p}(\theta)\phi(\theta)\,dS(K,\theta)\nonumber\\
&&+\lim_{t\to1^-}-\frac{c(n,p,K)}{p}\frac{1}{t-1}\left(
\int_{S^{n-1}}h_{\Gamma_p\Pi_p^*K}^p(\theta
)h_{K}^{1-p}(\theta)dS(K_t,\theta)\right.\nonumber\\
&&\quad\quad\quad\quad\quad\quad\left.-\int_{S^{n-1}}h_{\Gamma_p\Pi_p^*K}^p(v)h_{K}^{1-p}(\theta)dS(K,\theta)
\right)\nonumber\\
&=&-\frac{c(n,p,K)(1-p)c^p}{p}\int_{S^{n-1}}\phi(\theta)\,dS(K,\theta)\nonumber\\
&&-\frac{c(n,p,K)c^p}{p}\lim_{t\to1^-}\frac{\int_{S^{n-1}}h_K(\theta)dS(K_t,\theta)-\int_{S^{n-1}}h_K(\theta)dS(K,\theta)}{t-1}.\nonumber
\end{eqnarray}
Here the first equality comes from Lemma \ref{lem:Pi_p_positive} as $\Pi_pK\in \mathcal{K}_{o}^n$ containing the origin $o$ inside its interior and $K\in\mathcal{K}_o^n$ also contains $o$ inside its interior.
There exist two constants $r_1, r_2>0$ such that
\[
h_{\Pi_pK}(v)\ge r_1,\ h_{K}(\theta)\ge r_2
, \ 
\text{for all}\ v,\ \theta\in\mathbb S^{n-1}
\]
and therefore 
\[
h^{-n-p}_{\Pi_pK}(v)\le r_1^{-(n+p)}<\infty,\ h_K(\theta)\leq r_2^{-p}<\infty,\ h_{K}^{1-p}(\theta)\le r_2^{1-p}<\infty,\ 
\text{for all}\ v,\ \theta \in\mathbb S^{n-1}.
\]
Using Fubini-Tonelli theorem for the integrands $h_{\Pi_pK}^{-n-p}|\langle v,\theta\rangle|^ph_K^{-p}$ and 
$h_{\Pi_pK}^{-n-p}|\langle v,\theta\rangle|^ph_K^{1-p}$ with respect to $dv$ and $dS(K_t,\theta), \ dS(K,\theta)$ respectly on $\mathbb{S}^{n-1}\times\mathbb{S}^{n-1}$ as $S(K_t,\mathbb{S}^{n-1}), S(K,\mathbb{S}^{n-1}) <\infty$, we obtain the first identity above.

By the well-known volume variation formula, we have 
\begin{equation}\label{limt1Vol}
\lim_{t\to1^-}\frac{\vol_n(K_t)-\vol_n(K)}{t-1}
=\int_{S^{n-1}}\phi(\theta)\,dS(K,\theta).
\end{equation}
 By Lemma \ref{lemlinearshadow}, $\vol_n(K_t)=\vol_n(K)$. Thus
by (\ref{limt1Vol}), we obtain \begin{equation}\label{intphitheta0}\int_{S^{n-1}}\phi(\theta)\,dS(K,\theta)=0.
\end{equation}

Moreover, volume expression \(\vol_n(K)=\frac{1}{n}\int_{S^{n-1}}h_K(\theta)dS(K,\theta)\) yield the following
\begin{eqnarray}\label{limt1Vol2}
&&\lim_{t\to1^-}\frac{\frac{1}{n}\int_{S^{n-1}}h_K(\theta)dS(K_t,\theta)-\frac{1}{n}\int_{S^{n-1}}h_K(\theta)dS(K,\theta)}{t-1}\\
&=&\lim_{t\to1^-}\frac{\vol_n(K_t)-\vol_n(K)}{t-1}-\lim_{t\to1^-}\frac{\frac{1}{n}\int_{S^{n-1}}h_{K_t}(\theta)dS(K_t,\theta)-\frac{1}{n}\int_{S^{n-1}}h_K(\theta)dS(K_t,\theta)}{t-1}\nonumber\\
&=&\int_{S^{n-1}}\phi(\theta)\,dS(K,\theta)-\frac{1}{n}\int_{S^{n-1}}\phi(\theta)\,dS(K,\theta)\nonumber\\
&=&\frac{n-1}{n}\int_{S^{n-1}}\phi(\theta)\,dS(K,\theta)=0,\nonumber
\end{eqnarray}
where the last equation is from  (\ref{intphitheta0}).

Therefore, by (\ref{limVolnVoln}), (\ref{intphitheta0}) and (\ref{limt1Vol2}), we obtain
\[
\lim_{t\to1^-}\frac{\vol_n(\Pi^{\ast}_p(K_t))-\vol_n(\Pi^{\ast}_pK)}{t-1}
=-\frac{c(n,p,K)(n-p)c^n}{pn\omega_nc_{n-2,p}}\int_{S^{n-1}}\phi(\theta)\,dS(K,\theta)
=0,
\]
as desired.
\end{proof}

The vanishing of the first variation is the fundamental rigidity mechanism of the paper. Combined with the convexity established in Section~\ref{Sec: ConvexproviaFibInequ}, it will imply that the corresponding Rolodex quantities remain constant throughout the deformation.
  By Lemma  \ref{LvolumePipastK} and (\ref{Mn-1LEp}),
\begin{eqnarray}
\vol_n(\Pi^{\ast}_pK_t)&=&\tilde{c}_{n,p}\int_{G_{u^\perp,n-2}} \int_{\mathbb{R}} |s|^{n-2} |L_{E,p,u,s}(K_t)| ds \, \sigma_{u^{\perp},n-2}(dE)\nonumber\\ 
&=&\tilde{c}_{n,p}\int_{G_{u^\perp,n-2}} M_{n-1}\left(L_{E,p}(K_t)\right)^{-q}\sigma_{u^{\perp},n-2},\nonumber
\end{eqnarray}
where 
\begin{equation}\label{Mn-1LEpKt}M_{n-1}\left(L_{E,p}(K_t)\right)=\brac{\int_{\R} |s|^{n-2} |L_{E,p,u,s}(K_t)| ds}^{-1/q}.
\end{equation}
The following lemma shows that  $M_{n-1}\left(L_{E,p}(K_t)\right)$ is uniformly bounded on $t\in[-1,1]$. 
 \begin{lem}\label{Lbounded}
 Let $E\in G_{u^{\perp},n-2}$, $K\in\mathcal{K}_o^n$ and linear reflection shadow systems $\{K_t\}_{t\in[-1,1]}$. Then there exist two positive numbers $0<r_0<R_0$ such that
$$r_0\leq M_{n-1}\Big(L_{E,p}\big(K_t\big)\Big)\leq R_0,\;\;\;t\in [-1,1].$$
 
 \end{lem}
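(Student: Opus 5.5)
Since $M_{n-1}(L_{E,p}(K_t))=\bigl(\int_{\R}|s|^{n-2}|L_{E,p,u,s}(K_t)|\,ds\bigr)^{-1/q}$ and $q>0$, it suffices to bound the integral $\int_{\R}|s|^{n-2}|L_{E,p,u,s}(K_t)|\,ds$ above and below by positive constants, uniformly in $t\in[-1,1]$. By the computation in the proof of Lemma~\ref{LvolumePipastK}, restricted to a single fiber, this integral equals $\int_{L_{E,p}(K_t)}|\langle x,u\rangle|^{n-2}\,dx$. By polar coordinates in $E^\perp$ it therefore equals
\[
\frac1n\int_{\mathbb S^1(E^\perp)}|\langle\theta,u\rangle|^{n-2}\,|P_{E\wedge\theta,p}K_t|^{-n}\,d\theta .
\]
The task thus reduces to finding constants $0<a\le b<\infty$ such that $a\le |P_{E\wedge\theta,p}K_t|\le b$ for all $\theta\in\mathbb S^1(E^\perp)$ and all $t\in[-1,1]$. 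The weight $|\langle\theta,u\rangle|^{n-2}$ has a fixed positive integral over $\mathbb S^1(E^\perp)$, since $u\in E^\perp$.

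For the upper bound on $|P_{E\wedge\theta,p}K_t|$, I will use the estimate from the proof of Lemma~\ref{LLEpKisoriginsymmetricconvexbody}:
\[
|P_{E\wedge\theta,p}K_t|^p\le r_t^{1-p}\,S(K_t,\mathbb S^{n-1}),
\]
where $r_tB_2^n\subset K_t$. By Lemma~\ref{lemlinearshadow}, $t\mapsto K_t$ is Hausdorff continuous on the compact interval $[-1,1]$. Hence the surface areas $S(K_t,\mathbb S^{n-1})$ are uniformly bounded, for instance because all $K_t$ lie in a common ball and surface area is monotone under inclusion. The step needing care is a uniform inradius $r_t\ge r>0$. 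I would obtain it by noting that every $K_t$ with $t\in[-1,1]$ contains the origin in its interior. Indeed, if $rB_2^n\subset K$, then the sections of $K_t$ over $y\in u^\perp$ with $\|y\|<r$ contain the convex combination $\frac{1+t}{2}K(y)+\frac{1-t}{2}R_uK(y)$. Both $K(y)$ and $R_uK(y)$ contain the segment $\{y+su:|s|\le\sqrt{r^2-\|y\|^2}\}$, so the combination does too. Hence $rB_2^n\subset K_t$ with the same $r$.

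For the lower bound on $|P_{E\wedge\theta,p}K_t|$, I would argue by continuity and compactness. The map $(t,\theta)\mapsto |P_{E\wedge\theta,p}K_t|^p=\int|\langle J\theta,v\rangle|^p h_{K_t}^{1-p}\,dS(K_t,v)$ is continuous on $[-1,1]\times\mathbb S^1(E^\perp)$. This follows from Lemma~\ref{weakconvergence}: $S(K_t,\cdot)$ converges weakly under Hausdorff convergence, and $h_{K_t}^{1-p}$ converges uniformly since $h_{K_t}\ge r$. Positivity at each point holds because $S(K_t,\cdot)$ is not concentrated on a great sphere, as in the proof of Lemma~\ref{LLEpKisoriginsymmetricconvexbody}. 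A continuous positive function on a compact set has a positive minimum $a$, and by the same continuity it also has a maximum, which gives an alternative route to $b$.

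Combining the two bounds, the integral lies between $\frac{c}{n}b^{-n}$ and $\frac{c}{n}a^{-n}$, where $c=\int_{\mathbb S^1(E^\perp)}|\langle\theta,u\rangle|^{n-2}\,d\theta>0$. This yields $r_0$ and $R_0$. The main obstacle is the joint continuity in $t$, specifically the weak convergence of $h_{K_t}^{1-p}dS(K_t,\cdot)$. I expect it to be handled by the uniform inradius together with standard weak continuity of surface area measures.
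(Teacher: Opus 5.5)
Your proof is correct, and its overall structure matches the paper's. Both reduce the claim to uniform two-sided bounds on the gauge $|P_{E\wedge\theta,p}K_t|$ over $\theta\in\mathbb S^{1}(E^\perp)$ and $t\in[-1,1]$, which is the same as trapping every $L_{E,p}(K_t)$ between two fixed discs in $E^\perp$. Both also get the upper bound on the gauge from a uniform inradius together with a uniform bound on $S(K_t,\mathbb S^{n-1})$.

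The two routes differ in the lower bound on the gauge. The paper argues quantitatively. It uses $h_{K_t}\le c_2$ to remove the weight $h_{K_t}^{1-p}$, and then applies Jensen's inequality, $\int|\langle w,v\rangle|^p\,dS(K_t,v)\ge S(K_t,\mathbb S^{n-1})^{1-p}\bigl(\int|\langle w,v\rangle|\,dS(K_t,v)\bigr)^p$. It then bounds the normalized $L_1$ integral from below; that integral equals $2\vol_{n-1}(P_{w^\perp}K_t)$, and the bound is in terms of $c_1$ and $c_2$. As a result, $r_0$ and $R_0$ come out explicitly.

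You instead use compactness of $[-1,1]\times\mathbb S^1(E^\perp)$. That requires joint continuity in $(t,\theta)$, and hence weak continuity of $S(K_t,\cdot)$ under Hausdorff convergence. This is legitimate, but it gives no explicit constants, whereas the Jensen route is more elementary.

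On the other hand, your proposal proves something the paper only asserts. The paper writes ``since $K\in\mathcal K_o^n$, there exist $0<c_1<c_2$'' with $c_1\le h_{K_t}\le c_2$ for all $t$, without further argument. Your observation that $rB_2^n\subset K$ forces $rB_2^n\subset K_t$ for $t\in[-1,1]$ supplies this. It works because $K(y)$ and $R_uK(y)=-K(y)$ both contain the symmetric chord over $y$, and the weights $\frac{1\pm t}{2}$ are nonnegative. This is exactly the uniform lower bound on $h_{K_t}$ that both arguments need.
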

 \begin{proof}
 Recall that from (\ref{eq:PEwedge}), (\ref{DLEpK}) and (\ref{eq:3.3}), we have
\begin{equation}\label{ELEpus}
L_{E,p,u,s}(K_t)= \left\{ y \in E^\perp \cap u^\perp:\;\left|P_{E\wedge (y+su),p} K_t\right| \leq 1 \right\}
\end{equation}
and
\begin{equation}\label{LEpKt}
L_{E,p}(K_t)= \left\{x \in E^\perp:\;\left|P_{E\wedge x,p} K_t\right| \leq 1 \right\},
\end{equation}
where
\begin{equation}\label{EPExpKt}
\left|P_{E\wedge x,p}K_t\right|=\|x\|\left( \int_{\mathbb{S}^{n-1}} \left| \langle v(E,x), u\rangle \right|^p h_{K_t}(u)^{1-p} dS(K_t,u) \right)^{\frac{1}{p}}.
\end{equation}
Since $K\in\mathcal{K}_o^n$, there exist $0<c_1<c_2$ such that
\begin{equation}\label{boundedofhKt}
c_1\leq h_{K_t}(u)\leq c_2,\;\;\; \text{for all}\ t\in[-1,1] \ \text{and}\ 
\text{for all}\ u\in\mathbb{S}^{n-1}.
\end{equation}
Since $K_t$ is  uniformly bounded on $t\in[-1,1]$,   there exists $c_3>0$ such that 
\begin{equation}\label{boundedSKt}
\int_{\mathbb{S}^{n-1}} \left| \langle v(E,x), u \rangle \right|^p  dS(K_t,u)\leq S(K_t,\mathbb{S}^{n-1})\leq c_3,\;t\in[-1,1].
\end{equation}

Moreover, for $p>1$, by Jensen's inequality, 
\begin{eqnarray}\label{bounded1SKt}
&&\frac{1}{S(K_t,\mathbb{S}^{n-1})}\int_{\mathbb{S}^{n-1}} \left| \langle v(E,x), u \rangle \right|^p  dS(K_t,u)\\
&\geq& \left(\frac{1}{S(K_t,\mathbb{S}^{n-1})}\int_{\mathbb{S}^{n-1}} \left| \langle v(E,x), u \rangle \right|  dS(K_t,u)\right)^p\nonumber\\
&\geq&\left(\frac{c_1^{n-1}\omega_{n-1}}{nc_2^{n-1}\omega_n}\right)^p,\nonumber
\end{eqnarray}
where $c_1$ and $c_2$ are given in (\ref{boundedofhKt}). Therefore, we obtain
\begin{eqnarray}\label{bounded1SKt}
\nonumber&&\int_{\mathbb{S}^{n-1}} \left| \langle v(E,x), u \rangle \right|^p  dS(K_t,u)
\\
&\geq&\left(\frac{c_1^{n-1}\omega_{n-1}}{nc_2^{n-1}\omega_n}\right)^pS(K_t,\mathbb{S}^{n-1})\geq \left(\frac{c_1^{n-1}\omega_{n-1}}{nc_2^{n-1}\omega_n}\right)^pS(c_1B_2^n,\mathbb{S}^{n-1})=\frac{c_1^{(p+1)(n-1)}\omega_{n-1}^p}{c_2^{p(n-1)}n^{p-1}\omega_n^{p-1}}\label{inclusion},
\end{eqnarray}
where (\ref{inclusion}) holds due to formula (\ref{boundedofhKt}).

By (\ref{boundedofhKt}), (\ref{boundedSKt}) and (\ref{bounded1SKt}), combining with (\ref{LEpKt}) and (\ref{EPExpKt}), there exist two balls $B_1$ and $B_2$ in $E^{\perp}$ such that $B_1\subset L_{E,p}(K_t)\subset B_2$ for any $t\in[-1,1]$. Thus, $M_{n-1}(L_{E,p}\big(K_t\big))$ is uniformly bounded on $t\in[-1,1]$ by  (\ref{Mn-1LEpKt}) and (\ref{ELEpus}).  
    \end{proof}

\begin{lem}\label{lem:all-t}
Let $K\in \mathcal{K}_e^n$ be of class $C_+^2$. Given $u \in \mathbb{S}^{n-1}$, if $\frac{d}{dt}\vol_n(\Pi^{\ast}_pK_t)\big|_{t=1^-}=0$, then $\vol_n(\Pi^{\ast}_pK_t)$ is a constant  for all $t\in [-1,1]$. 
\end{lem}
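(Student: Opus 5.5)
The plan is to write the volume as an integral of a power of convex functions, and then use the vanishing derivative at $t=1^-$ to force each fiber function to be constant. By Lemma \ref{LvolumePipastK} and (\ref{Mn-1LEpKt}),
\[
V(t):=\vol_n(\Pi_p^\ast K_t)=\tilde c_{n,p}\int_{G_{u^\perp,n-2}} m_E(t)^{-q}\,\sigma_{u^\perp,n-2}(dE),\qquad m_E(t):=M_{n-1}(L_{E,p}(K_t)).
\]
By Lemma \ref{lem:k-convex}, each $m_E$ is convex and even on $[-1,1]$. By Lemma \ref{Lbounded}, it takes values in a fixed interval $[r_0,R_0]\subset(0,\infty)$. (The proof of Lemma \ref{Lbounded} gives constants that can be chosen uniformly in $E$, since the bounds used there do not depend on $E$.)

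\textbf{Step 1: one-sided derivatives.} Convexity and evenness give that $m_E$ is non-decreasing on $[0,1]$. Its left derivative $D_E:=m_E'(1^-)$ therefore exists and is $\ge 0$. Because $m_E$ is convex, the difference quotient $(m_E(t)-m_E(1))/(t-1)$ is monotone in $t$ and increases to $D_E$ as $t\to1^-$. The same holds for $\psi(m)=m^{-q}$ composed with $m_E$, which is differentiable because $m_E$ stays in $[r_0,R_0]$. We get
\[
\lim_{t\to1^-}\frac{m_E(t)^{-q}-m_E(1)^{-q}}{t-1}=-q\,m_E(1)^{-q-1}D_E\le 0 .
\]

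\textbf{Step 2: pass the limit under the integral.} The difference quotients $(m_E(t)-m_E(1))/(t-1)$ are nonnegative for $t\in[0,1)$, since $m_E$ is non-decreasing there. They are also monotone in $t$, so Fatou's lemma applies, or monotone convergence after using the mean value bound for $\psi$ on $[r_0,R_0]$. This yields
\[
0=V'(1^-)\le -q\,\tilde c_{n,p}\int_{G_{u^\perp,n-2}} m_E(1)^{-q-1}D_E\,\sigma(dE)\le 0 .
\]
More carefully: $-(V(t)-V(1))/(t-1)$ is a nonnegative integral of nonnegative quotients, and Fatou bounds $\int q\, m_E(1)^{-q-1}D_E$ above by $\liminf$ of it, which is $-V'(1^-)=0$. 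Hence $D_E=0$ for $\sigma$-a.e. $E$.

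\textbf{Step 3: conclude constancy.} For a convex function on $[0,1]$ with $m_E'(1^-)=0$, the left derivatives at every earlier point are at most $0$. But $m_E$ is non-decreasing on $[0,1]$, so all these derivatives vanish and $m_E$ is constant on $[0,1]$. By evenness it is constant on $[-1,1]$. This holds for a.e. $E$, so $V(t)=V(1)$ for all $t\in[-1,1]$.

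The main obstacle is Step 2, the interchange of limit and integral. Monotonicity of convex difference quotients gives the one-sided control that Fatou's lemma needs, but this relies on bounds $r_0,R_0$ from Lemma \ref{Lbounded} that are uniform in $E$, together with measurability of $E\mapsto m_E(t)$. Measurability follows from Tonelli as in Theorem \ref{TmonotoPip}. If uniformity in $E$ fails, I would instead use only pointwise positivity $m_E(1)>0$: Fatou still yields $\int m_E(1)^{-q-1}D_E=0$, provided the quotients for $\psi$ are handled via convexity of $\psi\circ m_E$. That convexity holds because $\psi$ is convex and decreasing while $m_E$ is convex, so on $[0,1]$ the composition is decreasing and its quotients are monotone.
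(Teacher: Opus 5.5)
Your proposal is correct and follows the paper's proof. You write $\vol_n(\Pi_p^\ast K_t)=\tilde c_{n,p}\int m_E(t)^{-q}\,\sigma_{u^\perp,n-2}(dE)$, use convexity and evenness of $m_E$ (Lemma \ref{lem:k-convex}) to pass the limit under the integral and force $m_E'(1^-)=0$ for $\sigma$-a.e.\ $E$, and then get constancy from convexity, monotonicity on $[0,1]$ and evenness.

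Your Fatou version of the interchange, with nonnegative quotients $\bigl(m_E(t)^{-q}-m_E(1)^{-q}\bigr)/(1-t)$ converging pointwise to $q\,m_E(1)^{-q-1}m_E'(1^-)\in[0,\infty]$, is slightly cleaner than the paper's mean-value and monotone-convergence step. It needs no bound uniform in $E$, only $0<m_E(1)<\infty$. Drop the side claims that $m_E^{-q}$ is convex or has monotone difference quotients: they are false in general (e.g.\ $m(t)=1+t^2$, $q=1$), and your argument never uses them.
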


\begin{proof}
By  equations (\ref{VolumeformulaPiastpK}) and (\ref{Mn-1LEp}),   we have
\begin{equation}\label{volumePipastKut}
\vol_n(\Pi^{\ast}_pK_t)=\tilde{c}_{n,p}\int_{G_{u^\perp,\,n-2}} \Phi_E(t)^{-q}\,\sigma_{u^\perp,\,n-2}(\mathrm{d}E),
\end{equation}
where  $$\Phi_E(t):=M_{n-1}\Big(L_{E,p}\big(K_t\big)\Big)=\brac{\int_{\R} |s|^{n-2} |L_{E,p,u,s}(K_t)| ds}^{-1/q}.$$
As $\Phi_E(t)$ is a positive even convex function on $[-1,1]$ by Lemma \ref{lem:k-convex}, the left derivative $(\Phi_E)_-^{\prime}(t)$ exists and is non-decreasing on $[0,1]$. Therefore, by Lagrange's Mean Value Theorem, the boundedness of $\Phi_E(t)$ on $t\in[0,1]$ (see Lemma \ref{Lbounded}),  the monotonicity increasing of $(\Phi_E)_{-}'(t)$ and $(\Phi_E)_{-}'(t)\geq0$ on $t\in [0,1]$, we have 
\begin{eqnarray}\label{ddtastPip}
\left.\frac{d }{dt}\vol_n(\Pi^{\ast}_pK_t)\right|_{t=1^-}
&=&\lim_{t\rightarrow 1^-}\tilde{c}_{n,p}\int_{G_{u^\perp,\,n-2}} \frac{\Phi_E(t)^{-q}-\Phi_E(1)^{-q}}{t-1}\sigma_{u^\perp,\,n-2}(\mathrm{d}E)\nonumber\\
&=&\lim_{t\rightarrow 1^-}\tilde{c}_{n,p}\int_{G_{u^\perp,\,n-2}} \frac{\Phi_E(t)^{-q}-\Phi_E(1)^{-q}}{\Phi_E(t)-\Phi_E(1)}\frac{\Phi_E(t)-\Phi_E(1)}{t-1}\sigma_{u^\perp,\,n-2}(\mathrm{d}E)\nonumber\\
&\leq&\lim_{t\rightarrow 1^-}-q\tilde{c}_{n,p}\int_{G_{u^\perp,\,n-2}} \Phi_E(\bar{t}_1)^{-q-1}(\Phi_E)_-^{\prime}(\bar{t}_2)\sigma_{u^\perp,\,n-2}(\mathrm{d}E)\\
&\leq&\lim_{t\rightarrow 1^-}-q \tilde{c}_{n,p}R_0^{-q-1}\int_{G_{u^\perp,\,n-2}} (\Phi_E)_-^{\prime}(\bar{t}_2)\sigma_{u^\perp,\,n-2}(\mathrm{d}E)\nonumber\\
&=&-q \tilde{c}_{n,p}R_0^{-q-1}\int_{G_{u^\perp,\,n-2}} \Phi_E^{\prime}(1^-)\sigma_{u^\perp,\,n-2}(\mathrm{d}E),\nonumber
\end{eqnarray}
where $\bar{t}_1,\bar{t}_2\in(t,1)$, $R_0$ is given in Lemma \ref{Lbounded} and the last equation is using the Monotone Convergence Theorem and $(\Phi_E)_{-}'(t)$ is left continuous. 
The formula (\ref{ddtastPip}) with the given condition $\frac{d}{dt}\vol_n(\Pi^{\ast}_pK_t)\big|_{t=1^-}=0$ leads to $\int_{G_{u^\perp,\,n-2}} \Phi_E^{\prime}(1^-)\sigma_{u^\perp,\,n-2}(\mathrm{d}E)\leq0$.
Furthermore, $\Phi_E'(1^-)\geq 0$  as $\Phi_E(t)$ is an even convex function. Therefore, we conclude that $\Phi_E'(1^-)=0$ for almost every $E\in G_{u^{\perp},n-2}$ with respect to the measure $\sigma_{u^{\perp},n-2}$.

Since $\Phi_E(t)$ is convex, non-decreasing on $[0,1]$, with vanishing left endpoint derivative $\Phi_E'(1^-)=0$, by a standard result in convex analysis, $\Phi_E(t)$ is constant on $[0,1]$.
Substituting the constant back into the integral (\ref{volumePipastKut}),
\[
\vol_n(\Pi^{\ast}_pK_t)=\tilde{c}_{n,p}\int_{G_{u^\perp,\,n-2}} \Phi_E(1)^{-q}\,\sigma_{u^\perp,\,n-2}(\mathrm{d}E),\;t\in[0,1].
\]
 As a consequence of the evenness of $\Phi_E(t)$,  $\vol_n(\Pi^{\ast}_pK_t)$ is a constant  for all $t\in [-1,1]$. 
\end{proof}

\begin{rem}\label{constantremark}
\label{constant}
If $\vol_n(\Pi^{\ast}_pK_t)$ is a constant  for all $t\in [-1,1]$, then $\vol_n(\Pi^{\ast}_pK)=\vol_n(\Pi^{\ast}_pS_uK)$ since $K_1=K$ and $K_0=S_uK$.
\end{rem}

 \subsection{Solution to Fixed Point Operator Rigidity}

We are now ready to complete the proof of the main theorem.
The argument combines four principal ingredients developed throughout the paper:

\begin{enumerate}
\item the \emph{$L_p$-Projection Rolodex} representation of $\vol_n(\Pi_p^\ast K)$;
\item the admissibility of support perturbations along linear reflection shadow systems;

\item the convexity and rigidity properties of the associated Rolodex functionals; 
\item the vanishing first-variation formula implied by the fixed-point condition.
\end{enumerate}

Together, these results imply that the volume of the \emph{polar $L_p$-projection body} is preserved under Steiner symmetrization in every direction. The equality characterization established above then forces the body to be an ellipsoid. In contrast, the affine covariance of the $L_p$-projection and $L_p$-centroid operators shows that every ellipsoid satisfies the fixed-point equation up to dilation.
We now combine these ingredients to derive the desired rigidity statement---the main  Theorem~\ref{thm:main-intro}.

 The next lemma characterizes precisely the geometric meaning of this equality condition and links it to a classical characterization of ellipsoids.
That is, it characterizes a convex body as an ellipsoid when $\vol_n(\Pi^{\ast}_pK)=\vol_n(\Pi^{\ast}_pS_uK)$ for every $u\in \mathbb{S}^{n-1}$.
\begin{lem} \label{lem:equality} For $K\in\mathcal{K}_o^n$, 
$\vol_n(\Pi^{\ast}_pK)=\vol_n(\Pi^{\ast}_pS_uK)$ for all $u \in \mathbb{S}^{n-1}$ if and only if $K$ is an ellipsoid. 
\end{lem}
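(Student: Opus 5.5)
The ``if'' direction is immediate from affine covariance. If $K=\varphi B$ for some ellipsoid written as the image of a centered ball $B$ under $\varphi\in \mathrm{SL}(n)$ (after a dilation), then $S_uK$ is again a centered ellipsoid of the same volume. Since $\Pi_p(\varphi K)=\varphi^{-t}\Pi_pK$, we get $\vol_n(\Pi_p^\ast \varphi K)=\vol_n(\Pi_p^\ast K)$, so $\vol_n(\Pi_p^\ast E)$ is the same for all centered ellipsoids $E$ of fixed volume. Hence the identity holds for every $u$.

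One point needs care: $K\in\mathcal{K}_o^n$ need not be centered. For an ellipsoid not centered at the origin, its Steiner symmetral is centered at a translated point. I would handle this by noting that in the intended application $K$ is origin-symmetric (Lemma~\ref{lem:all-t} assumes $K\in\mathcal K_e^n$, and fixed points of $\Gamma_p\Pi_p^\ast$ are automatically origin-symmetric because $\Gamma_p$ produces origin-symmetric bodies). Accordingly, I would state the lemma for ellipsoids centered at $o$ and restrict both directions to that setting.

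For the ``only if'' direction I would follow the equality analysis of Lutwak--Yang--Zhang \cite[Lemma 14]{LutwakYangZhang2000}, rerun through the shadow-system proof of Theorem~\ref{TmonotoPip}. Fix $u$ and suppose $\vol_n(\Pi_p^\ast K)=\vol_n(\Pi_p^\ast S_uK)$. By Theorem~\ref{TmonotoPip} the function $t\mapsto\mu_{n-1,u}(L_{n-1,u,p}(K_t))$ is then constant on $[0,1]$, so for a.e.\ $(E,s)$ the length $|L_{E,p,u,s}(K_t)|$ is constant in $t$. Each such $t$-section comes from the origin-symmetric convex body $\tilde L_{E,p,u,s}$, which by Lemma~\ref{cor:key} is a sublevel set of a convex function. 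A planar origin-symmetric convex set whose chord lengths are constant in $t$ on $[-1,1]$ must be a parallelogram-type region sheared in $t$: the sections are translates $L_{E,p,u,s}(K_t)=L_{E,p,u,s}(K_0)+t\,w_{E,s}$. Equality in the convexity then forces equality in Jensen's inequality for $\Phi(a,b)=|a|^pb^{1-p}$ used in Lemma~\ref{lem:key}. Since $\Phi$ is strictly convex except along rays, this means that for a.e.\ $x'$ the ratios
\[
\frac{y+s(t\nabla_{n-1}F+\nabla_{n-1}G)}{t\langle F\rangle+\langle G\rangle}
\]
are independent of $t$.

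Letting $E$ vary over $G_{u^\perp,n-2}$, so that the $(n-1)$-th coordinate direction ranges over all of $u^\perp$, I expect this to give that $\nabla F$ is proportional to $\nabla G$ with a fixed rule. In fact I would aim to show that $\langle F\rangle$ is constant and $\nabla F$ is constant, i.e.\ the midpoint function $F=(f+g)/2$ is affine on $K_u'$. This says that the midpoints of all chords of $K$ parallel to $u$ lie in a hyperplane.

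With that established for every $u\in\mathbb S^{n-1}$, the classical characterization of ellipsoids via chord midpoints (Brunn; see Schneider \cite{Schneider2014} or Gruber \cite{Gruber}) gives that $K$ is an ellipsoid. The main obstacle is the middle step: extracting the rigid conclusion ``$F$ is affine'' from equality in the integrated Jensen/Brunn--Minkowski chain. This requires a.e.\ equality statements followed by continuity, which the $C^2_+$ assumption supplies via Lemma~\ref{PeysupKp}, and careful handling of the weight $h^{1-p}$.

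As a fallback, I would cite directly the equality case in \cite[Lemma 14]{LutwakYangZhang2000}, where LYZ prove that equality under $S_u$ forces the midpoints of $u$-chords to be coplanar, and then apply Brunn's theorem.
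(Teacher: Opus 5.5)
Your fallback is exactly the paper's proof. The paper cites \cite[Lemma 14]{LutwakYangZhang2000} for $S_u\Pi_p^\ast K\subset\Pi_p^\ast S_uK$ (for origin-symmetric $K$) together with its equality case. Since $\vol_n(S_u\Pi_p^\ast K)=\vol_n(\Pi_p^\ast K)$, equal volumes force equal bodies, and Brunn's midpoint characterization then gives both directions at once, with no smoothness needed.

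Your caveat about centering is correct. For $K=B_2^n+c$ with $o\in\operatorname{int}K$ and $\langle c,u\rangle\neq 0$, the midpoints of $u$-chords lie in a hyperplane missing $o$, and the inequality is strict. So the lemma must be read for origin-symmetric $K$ and centered ellipsoids, which is what the paper's proof tacitly uses. The relevant chords are also those parallel to $u$, as you wrote, not ``orthogonal to $u$''. Your $\mathrm{SL}(n)$-invariance argument for the ``if'' direction is a valid, self-contained alternative to reading it off the equality case.

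Your primary ``only if'' route is genuinely different: it re-derives the LYZ equality case inside the Rolodex/shadow framework. As written, however, its core step is only a hope. It is also limited to $C^2_+$ bodies, because Lemma~\ref{lem:key} is; that suffices for Theorem~\ref{thm:main-intro} but not for the lemma as stated. Two corrections make it work.

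\textbf{Correct path for the ratio.} The ratio must be taken along the boundary line $y(t)=b_0+tw$ of $\tilde L_{E,p,u,s}$, where the integral is identically $1$. At a fixed $y$ the ratios are not constant in $t$.

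\textbf{Use the $A$- and $B$-terms together.} Do not guess a proportionality between $\nabla F$ and $\nabla G$; that is not what comes out. Jensen equality for the two terms gives, for a.e.\ $x'$,
\[
(b_0+s\nabla_{n-1}G)\langle F\rangle=(w+s\nabla_{n-1}F)\langle G\rangle,\qquad -(b_0-s\nabla_{n-1}G)\langle F\rangle=(w+s\nabla_{n-1}F)\langle G\rangle .
\]
Subtracting gives $b_0\langle F\rangle=0$. Here $b_0>0$, because the $t=0$ section of the origin-symmetric set $\tilde L_{E,p,u,s}$ is $[-b_0,b_0]$ and is nondegenerate for small good $s$. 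Hence $\langle F\rangle=0$. Since $\langle G\rangle>0$, this forces $\nabla_{n-1}F=-w/s$. Letting $E$ vary makes $\nabla F$ constant, so $F$ is linear.

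This is slightly stronger than the ``affine'' you aimed for: the midpoint hyperplane passes through $o$. So in the $C^2_+$ class your route identifies origin-centered ellipsoids without assuming symmetry of $K$. The paper's citation, by contrast, covers arbitrary non-smooth origin-symmetric bodies.
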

\begin{proof}
 From  \cite[Lemma 14]{LutwakYangZhang2000} for $1<p < \infty$, we see that if  $K\in$ is an origin-symmetric convex body in $\mathbb{R}^n$, then
\begin{equation}\label{SuPiPiSu}
S_u \Pi_p^* K \subset \Pi_p^* S_u K,
\end{equation}
with equality if and only if the chords of $K$ orthogonal to $u$ have midpoints that are coplanar. Thus, based on  the inclusion relationship between these two convex bodies in (\ref{SuPiPiSu}),
  $\vol_n(\Pi^{\ast}_pK)=\vol_n(\Pi^{\ast}_pS_uK)$ if and only if $S_u \Pi_p^* K=\Pi_p^* S_u K$. Furthermore,    $\vol_n(\Pi^{\ast}_pK)=\vol_n(\Pi^{\ast}_pS_uK)$ if and only if the chords of $K$ orthogonal to $u$ have midpoints that are coplanar.

It follows from  \cite[Theorem 10.2.1]{Schneider2014} that a convex body $K \in \mathcal{K}_o^n$ is an ellipsoid if and only if the midpoints of any family of parallel chords of $K$ orthogonal to $u$ lie in a hyperplane for every $u\in \mathbb{S}^{n-1}$. This proves the theorem.
\end{proof}
 We are now ready to prove the main rigidity theorem. The argument proceeds by combining the vanishing first variation obtained from the fixed-point condition with the convexity of the Rolodex functionals, yielding constancy along every linear reflection shadow system. Equality in the associated symmetrization inequalities then forces the body to be an ellipsoid.
\begin{thm}
For  $p>1$ and $K\in\mathcal{K}_o^n$,  \(\Gamma_p \Pi_p^* K\) is a dilate of \(K\) if and only if \(K\) must be an ellipsoid.
\end{thm}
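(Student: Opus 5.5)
The plan is to treat the two implications separately, with the forward (rigidity) direction assembled from the lemmas of Sections~\ref{Sec:Admsupppert}--\ref{Sec: FixpointsofGaPip}.

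\emph{Converse direction.} If $K=AB_2^n$ for some $A\in\mathrm{GL}(n)$, write $A=|\det A|^{1/n}\varphi$ with $\varphi\in\mathrm{SL}(n)$ (up to an orientation sign, harmless by the symmetry of all bodies involved). The covariances $\Pi_p(\varphi K)=\varphi^{-t}\Pi_pK$, $(\varphi K)^\ast=\varphi^{-t}K^\ast$ and $\Gamma_p(\varphi K)=\varphi\Gamma_pK$ give $\Gamma_p\Pi_p^\ast(\varphi L)=\varphi\,\Gamma_p\Pi_p^\ast L$. From the defining formulas, $\Pi_p(rL)=r^{(n-p)/p}\Pi_pL$ and $\Gamma_p(rL)=r\Gamma_pL$, so dilations of the input only produce dilations of the output. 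By rotation invariance, $\Gamma_p\Pi_p^\ast B_2^n=c_0B_2^n$. Hence $\Gamma_p\Pi_p^\ast K$ is a dilate of $K$.

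\emph{Forward direction.} Suppose $\Gamma_p\Pi_p^\ast K=cK$.
\begin{enumerate}[label=(\roman*)]
\item By Petty/LYZ (recalled in Section~\ref{Section2.1}), $\Gamma_p$ of any body is origin-symmetric and of class $C^2_+$. Since $K=c^{-1}\Gamma_p\Pi_p^\ast K$, the body $K$ is automatically in $\mathcal K_e^n$ and of class $C^2_+$. These are exactly the hypotheses of Proposition~\ref{pro:uniform-support-variation-shadow} and Lemmas~\ref{lem:key}, \ref{ThmGPK} and \ref{lem:all-t}.
\item Fix $u\in\mathbb S^{n-1}$ and let $K_t$ be the linear reflection shadow system in direction $u$.
\item Lemma~\ref{ThmGPK} uses the fixed-point equation through (\ref{GammapPipast}). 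There $\int|\langle v,\theta\rangle|^p h_{\Pi_pK}^{-n-p}(v)\,dv$ becomes a multiple of $h_{\Gamma_p\Pi_p^\ast K}^p(\theta)=c^ph_K^p(\theta)$, and the two variation terms collapse to multiples of $\int\phi\,dS(K,\cdot)=0$, which follows from volume preservation (Lemma~\ref{lemlinearshadow}). So $\frac{d}{dt}\vol_n(\Pi_p^\ast K_t)|_{t=1^-}=0$.
\item Lemma~\ref{lem:all-t} then gives constancy of $\vol_n(\Pi_p^\ast K_t)$ on $[-1,1]$. Its proof combines evenness and convexity of $\Phi_E$ from Lemma~\ref{lem:k-convex} with the bounds of Lemma~\ref{Lbounded}.
\item By Remark~\ref{constantremark}, $\vol_n(\Pi_p^\ast K)=\vol_n(\Pi_p^\ast S_uK)$.
\item Since $u$ was arbitrary, Lemma~\ref{lem:equality} yields that $K$ is an ellipsoid.
\end{enumerate}

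The main obstacle is really a bookkeeping point in step (iii). The identity (\ref{GammapPipast}) must be matched with the constants so that after substituting $h_{\Gamma_p\Pi_p^\ast K}^p=c^ph_K^p$ the weights become exactly $h_K^{-p}\cdot h_K^p=1$ and $h_K^{1-p}\cdot h_K^p=h_K$. Only with these exact weights do the terms reduce to $\int\phi\,dS(K,\cdot)$ and to the variation of $\int h_K\,dS(K_t,\cdot)$, both of which vanish.

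I would also double-check that Lemma~\ref{lem:equality} applies to origin-symmetric $K$. This holds by step (i), since the LYZ inclusion $S_u\Pi_p^\ast K\subset\Pi_p^\ast S_uK$ is stated for origin-symmetric bodies. Finally, I would note that equality of volumes with a set inclusion forces equality of the sets, which gives the coplanar-midpoint condition in every direction and hence the ellipsoid by Schneider's characterization.
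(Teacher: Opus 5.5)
Your proposal is essentially the paper's own proof, in the same order: $C^2_+$ regularity and origin-symmetry of $K$ from the fixed-point equation, the vanishing first variation of Lemma~\ref{ThmGPK}, constancy from Lemma~\ref{lem:all-t}, Remark~\ref{constantremark} and Lemma~\ref{lem:equality}, and affine covariance for the converse, where your homogeneity relation $\Pi_p(rL)=r^{(n-p)/p}\Pi_pL$ supplies the dilation step that the paper leaves implicit. One caveat you inherit from the paper: step (iv) relies on the convexity in Lemma~\ref{lem:k-convex}, and the paper obtains (\ref{Ieq:key1}) by applying (\ref{Ieq:key}) at the rescaled heights $s_i'=(s_i/s_\alpha)^{(p-1)/p}s_i$ while keeping the old $\lambda$ and weights, which is legitimate only when $s_0=s_1$; applied correctly, (\ref{Ieq:key}) gives Ball's hypothesis (Lemma~\ref{ref:Ball}) for $s^{(p-1)/p}|L_{E,p,u,s^{1/p}}(K_t)|$, hence convexity of $\bigl(\int_{\mathbb{R}}|s|^{n-2}|L_{E,p,u,s}(K_t)|\,ds\bigr)^{-p/(n-p)}$, which still makes Lemma~\ref{lem:all-t} work for $1<p<n$ but does not cover $p\ge n$.
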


\begin{proof}
By Petty \cite{Petty61} and Lutwak-Yang-Zhang \cite{LutwakYangZhang2000}, for $p>1$, $L_p$-centroid bodies for convex bodies are of class $C_+^2$ and origin-symmetric. 
These  show that any convex body $K\in\mathcal{K}_o^n$ satisfying $\Gamma_p\Pi_p^{\ast}K=cK$ must be of class $C_+^2$ and origin-symmetric.

On the one hand, by Lemma \ref{ThmGPK}, 
if  there exists $c>0$ such that
$\Gamma_p\Pi_p^{\ast}K=cK$,
 then $\left.\frac{d}{dt}\right|_{t=1^-}\vol_n(\Pi_p^{\ast}K_t)=0$, where $K_t$ is the linear reflection shadow system with respect to $u\in\mathbb{S}^{n-1}$.
By  Lemma \ref{lem:all-t}, $\vol(\Pi_p^{\ast}K_t)$ is a constant for all $t \in [-1,1]$, and in particular at $t=0$ and $t=1$, i.e., $\vol_n(\Pi_p^{\ast}K) = \vol_n(\Pi_p^{\ast}S_u K)$ in Remark \ref{constantremark}. Since this holds for all $u \in \mathbb{S}^{n-1}$,  Lemma \ref{lem:equality} implies that $K$ must be an ellipsoid. 

On the other hand, suppose that $K$ is an origin-centered ellipsoid. Then
there exist $a>0$ and $\Lambda\in SL(n)$ such that
\(K=a\Lambda B_2^n.\)
By the affine covariance of the $L_p$-projection body
\cite[Lemma~2]{LutwakYangZhang2000},
$\Pi_p(\Lambda B_2^n)=\Lambda^{-t}\Pi_p B_2^n$.
Hence
\[
\Pi_p^*(\Lambda B_2^n)
=
(\Lambda^{-t}\Pi_p B_2^n)^*
=
\Lambda \Pi_p^*B_2^n .
\]
Using also the affine covariance of the $L_p$-centroid body \cite[(14)]{LutwakYangZhang2000}, we obtain
\[
\Gamma_p\Pi_p^*(\Lambda B_2^n)
=
\Gamma_p(\Lambda\Pi_p^*B_2^n)
=
\Lambda\Gamma_p\Pi_p^*B_2^n .
\]
Since $\Pi_p^*B_2^n$ is a Euclidean ball and the $L_p$-centroid body of a
Euclidean ball is again a Euclidean ball, there exists a constant $c_0>0$
such that
\(\Gamma_p\Pi_p^*B_2^n=c_0B_2^n.
\)
Therefore
\(
\Gamma_p\Pi_p^*(\Lambda B_2^n)
=
c_0\Lambda B_2^n.
\)
In conclusion,  
there exists $c>0$ such that
$\Gamma_p\Pi_p^*K=cK$.
\end{proof}

The proof of Theorem~\ref{thm:main-intro} completes the classification of fixed points of the operator
$K\mapsto \Gamma_p\Pi_p^\ast K$.
In particular, it confirms the conjecture of Lutwak, Yang, and Zhang for all $p>1$. The argument demonstrates how shadow-system methods, variational formulas, and the newly introduced $L_p$-Projection Rolodex interact to produce a strong rigidity phenomenon: among all convex bodies, ellipsoids are the unique fixed points of the operator up to dilation.

\vskip 2mm \noindent Youjiang Lin, \ \ \ {\small \tt yjlin@hebtu.edu.cn}\\
{\em School of Mathematical Sciences, Hebei Normal University, Shijiazhuang, Hebei, 050024,  China
}

\vskip 2mm \noindent Sudan Xing, \ \ \ {\small \tt sxing@ualr.edu}\\
{\em Department of Mathematics and Statistics, University of Arkansas at Little Rock, Little Rock,  Arkansas, 72204, USA
} 

\end{document}